\documentclass[12pt, reqno]{amsart}
\usepackage{amssymb}
\usepackage{eucal}
\usepackage{enumerate}
\usepackage{amsmath}
\usepackage{amscd}
\usepackage[dvips]{color}
\usepackage{mathtools}
\usepackage{multicol}
\usepackage[all]{xy}           
\usepackage{graphicx}
\usepackage{txfonts}
\usepackage{color}
\usepackage{colordvi}
\usepackage{xspace}
\usepackage{ulem}
\usepackage{cancel}
\usepackage{tikz}
\usepackage{longtable}
\usepackage{multirow}
\usepackage{ifpdf}
\ifpdf
 \usepackage[colorlinks,final,backref=page,hyperindex]{hyperref}
\else
 \usepackage[colorlinks,final,backref=page,hyperindex,hypertex]{hyperref}
\fi

\usepackage{comment}    

\usepackage{newtxtext}

\setcounter{tocdepth}{3}
\setcounter{secnumdepth}{3}


\topmargin -.8cm \textheight 22.8cm \oddsidemargin 0cm \evensidemargin -0cm \textwidth 16.3cm


\begin{document}

\newcommand{\tabincell}[2]{\begin{tabular}{@{}#1@{}}#2\end{tabular}}

\newcommand{\nc}{\newcommand}
\newcommand{\delete}[1]{}

\nc{\mlabel}[1]{\label{#1}}  
\nc{\mcite}[1]{\cite{#1}}  
\nc{\mref}[1]{\ref{#1}}  
\nc{\meqref}[1]{~\eqref{#1}} 
\nc{\mbibitem}[1]{\bibitem{#1}} 

\delete{
\nc{\mlabel}[1]{\label{#1}  
{\hfill \hspace{1cm}{\bf{{\ }\hfill(#1)}}}}
\nc{\mcite}[1]{\cite{#1}{{\bf{{\ }(#1)}}}}  
\nc{\mref}[1]{\ref{#1}{{\bf{{\ }(#1)}}}}  
\nc{\meqref}[1]{~\eqref{#1}{{\bf{{\ }(#1)}}}} 
\nc{\mbibitem}[1]{\bibitem[\bf #1]{#1}} 
}

\newtheorem{thm}{Theorem}[section]
\newtheorem{lem}[thm]{Lemma}
\newtheorem{cor}[thm]{Corollary}
\newtheorem{pro}[thm]{Proposition}
\newtheorem{conj}[thm]{Conjecture}
\theoremstyle{definition}
\newtheorem{defi}[thm]{Definition}
\newtheorem{ex}[thm]{Example}
\newtheorem{rmk}[thm]{Remark}
\newtheorem{pdef}[thm]{Proposition-Definition}
\newtheorem{condition}[thm]{Condition}

\renewcommand{\labelenumi}{{\rm(\alph{enumi})}}
\renewcommand{\theenumi}{\alph{enumi}}
\renewcommand{\labelenumii}{{\rm(\roman{enumii})}}
\renewcommand{\theenumii}{\roman{enumii}}

\nc{\tred}[1]{\textcolor{red}{#1}}
\nc{\tblue}[1]{\textcolor{blue}{#1}}
\nc{\tgreen}[1]{\textcolor{green}{#1}}
\nc{\tpurple}[1]{\textcolor{purple}{#1}}
\nc{\btred}[1]{\textcolor{red}{\bf #1}}
\nc{\btblue}[1]{\textcolor{blue}{\bf #1}}
\nc{\btgreen}[1]{\textcolor{green}{\bf #1}}
\nc{\btpurple}[1]{\textcolor{purple}{\bf #1}}

\nc{\li}[1]{\tpurple{#1}} \nc{\lir}[1]{\tpurple{Li:#1}}
\nc{\cm}[1]{\tred{CM:#1}} \nc{\gl}[1]{\tblue{GL:#1}}


\newcommand{\End}{\text{End}}

\nc{\calp}{\mathcal{P}}
\nc{\calb}{\mathcal{B}}
\nc{\call}{ L}
\nc{\calo}{\mathcal{O}}
\nc{\frakg}{\mathfrak{g}}
\nc{\frakh}{\mathfrak{h}}
\nc{\ad}{\mathrm{ad}}

\nc{\QQ}{\mathbb{Q}}
\nc{\RR}{\mathbb{R}}
\nc{\ZZ}{\mathbb{Z}}

\nc{\ccred}[1]{\tred{\textcircled{#1}}}

\nc{\daop}{dual a-$\mathcal{O}$-operator\xspace}
\nc{\aop}{a-$\mathcal{O}$-operator\xspace}
\nc{\daops}{dual a-$\mathcal{O}$-operators\xspace}
\nc{\aops}{a-$\mathcal{O}$-operators\xspace}
\nc{\sctplab}{special apre-Leibniz bialgebra\xspace}
\nc{\sctplabs}{special apre-Leibniz bialgebras\xspace}
\nc{\apl}{apre-Leibniz algebra\xspace}
\nc{\apls}{apre-Leibniz algebras\xspace}
\nc{\da}{DPL algebra\xspace}
\nc{\das}{DPL algebra\xspace}
\nc{\sa}{SDPL algebra\xspace}
\nc{\sas}{SDPL algebras\xspace}
\nc{\sctplasubs}{special
type-$a$ pre-Leibniz subalgebras\xspace}

\nc{\quadco}{type-$M$\xspace}
\nc{\rep}{
$\big(( L_{\succ_{A}}, R_{\prec_{A}})M,A\big)$
\xspace}
\nc{\repequ}{ $\big(( L_{\succ_{A}}, R_{\prec_{A}})M,A\big)
=(a_{1} L_{\succ_{A}}+a_{2} R_{\prec_{A}},
b_{1} L_{\succ_{A}}+b_{2} R_{\prec_{A}},A)$
\xspace}
\nc{\dualrep}{
$\big(( L^{*}_{\succ_{A}}, R^{*}_{\prec_{A}})M,
A^*\big)$
\xspace}
\nc{\dualrepequ}{ $\big(( L^{*}_{\succ_{A}}, R^{*}_{\prec_{A}})M,
A^*\big)
=(a_{1} L^{*}_{\succ_{A}}+a_{2} R^{*}_{\prec_{A}}, b_{1} L^{*}_{\succ_{A}}+b_{2} R^{*}_{\prec_{A}},A^{*})$
\xspace}
\nc{\sctplas}{special \delete{
\begin{math}
\begin{pmatrix}
1 & -1  \\
-1 & 0
\end{pmatrix}
\end{math}}type-$a$ pre-Leibniz algebras\xspace}
\nc{\Sctplas}{Special \delete{
        \begin{math}
            \begin{pmatrix}
                1 & -1  \\
                -1 & 0
            \end{pmatrix}
    \end{math}}type-$a$ pre-Leibniz algebras\xspace}
\nc{\sctpla}{special \delete{
    \begin{math}
        \begin{pmatrix}
            1 & -1  \\
           -1 & 0
        \end{pmatrix}
    \end{math}}type-$a$ pre-Leibniz algebra\xspace}
\nc{\sctplc}{special \delete{
    \begin{math}
        \begin{pmatrix}
            1 & -1  \\
           -1 & 0
        \end{pmatrix}
    \end{math}}type-$a$ pre-Leibniz coalgebra\xspace}
\nc{\sctplbs}{special \delete{
    \begin{math}
        \begin{pmatrix}
            1 & -1  \\
           -1 & 0
        \end{pmatrix}
    \end{math}}type-$a$ pre-Leibniz bialgebras\xspace}
\nc{\sctplb}{special \delete{
    \begin{math}
        \begin{pmatrix}
            1 & -1  \\
           -1 & 0
        \end{pmatrix}
    \end{math}}type-$a$ pre-Leibniz bialgebra\xspace}
\nc{\Sctplbs}{Special \delete{
    \begin{math}
        \begin{pmatrix}
            1 & -1  \\
           -1 & 0
        \end{pmatrix}
    \end{math}}type-$a$ pre-Leibniz bialgebras\xspace}
\nc{\ctplas}{\delete{
    \begin{math}
        \begin{pmatrix}
            1 & -1  \\
           -1 & 0
        \end{pmatrix}
    \end{math}}type-$a$ pre-Leibniz algebras\xspace}
\nc{\ctpla}{\delete{
    \begin{math}
        \begin{pmatrix}
            1 & -1  \\
           -1 & 0
        \end{pmatrix}
    \end{math}}type-$a$ pre-Leibniz algebra\xspace}

\title[Generalized splitting of algebras  and application to Leibniz algebras]{Generalized splitting of algebras with application to a bialgebra structure of Leibniz algebras induced from averaging Lie bialgebras}

    \author{Chengming Bai}
    \address{Chern Institute of Mathematics \& LPMC, Nankai University, Tianjin 300071, China}
    \email{baicm@nankai.edu.cn}

    \author{Li Guo}
    \address{Department of Mathematics and Computer Science, Rutgers University, Newark, NJ 07102, USA}
    \email{liguo@rutgers.edu}

    \author{Guilai Liu}
    \address{Chern Institute of Mathematics \& LPMC, Nankai University, Tianjin 300071, China}
    \email{liugl@mail.nankai.edu.cn}

    \author{Quan Zhao}
    \address{Chern Institute of Mathematics \& LPMC, Nankai University, Tianjin 300071, China}
    \email{zhaoquan@mail.nankai.edu.cn}

\date{\today}

\begin{abstract}
The classical notion of splitting a binary quadratic operad $\calp$ gives the notion of pre-$\calp$-algebras characterized by $\calo$-operators, with pre-Lie algebras as a well-known example.
Pre-$\calp$-algebras give a refinement of the structure of $\calp$-algebras and is critical in the Manin triple approach to bialgebras for $\calp$-algebras.
Motivated by the new types of splitting appeared in recent studies,
this paper aims to extend the classical notion of splitting, by relaxing the requirement that the adjoint actions of the pre-$\calp$-algebra form a representation of the $\calp$-algebra, to allow also linear combinations of the adjoint actions to form a representation.
This yields a whole family of type-$M$ pre-structures,  parameterized by the coefficient matrix $M$ of the linear combinations. Using the duals of the adjoint actions gives another family of splittings.
Similar generalizations are given to the $\calo$-operator characterization of the splitting, and to certain conditions on bilinear forms.
Furthermore, this general framework is applied to determine the bialgebra structure induced from averaging Lie bialgebras, lifting the well-known fact that an averaging Lie algebra induces a Leibniz algebra to the level of bialgebras.
This is achieved by interpreting the desired bialgebra structure for the Leibniz algebra as the one for a special type-$M$ pre-Leibniz algebra for a suitably chosen matrix $M$ in the above family.
\end{abstract}

\subjclass[2020]{
    17A36,  
    17A40,  
    17B10,
    17B38, 
    17B60, 
    17D25,  
    18M70.  
}

\keywords{Splitting of algebra, $\mathcal{O}$-operator,
averaging operator, Manin triple, bialgebra, Leibniz algebra}

\maketitle

\vspace{-1.5cm}

\tableofcontents

\vspace{-1.5cm}

\allowdisplaybreaks

\section{Introduction}
This paper introduces a general framework for splitting of algebras and applies it to provide an induced structure from an averaging Lie bialgebra.

\subsection{Classical splitting of algebras, $\calo$-operators and bilinear forms}\label{sec:1.1}\

Let $\calp$ be a binary quadratic operad \cite{LV}. A splitting of $\calp$ is a refinement of $\calp$. The defining operations of the refinement give a splitting of the defining operations of $\calp$ into sums of multiple parts, and the defining relations of the refinement can also be regarded as a splitting of the defining relations of $\calp$ into multiple parts.

Loday \cite{Lod4} described his dendriform algebra as splitting
the associative product into a sum of two products. Over the
years, many types of splittings of operations have
emerged, eventually generalized to the notion of pre-$\calp$-algebras, as well as post-$\calp$-algebras \cite{BBGN}.
This paper will focus on the case when the operad $\calp$ has one
binary operation which is split into two parts. Then the
``classical" splitting of a $\calp$-algebra, called a pre-$\calp$-algebra has the following
precise meaning in addition to its operadic characterization.

\begin{defi} \label{d:split}
\cite{BBGN} For a binary quadratic operad $\calp$ with one binary operation, a {\bf pre-$\calp$-algebra} is defined to be the vector space $A$ together with multiplications $\succ_{A},\prec_{A}:A\otimes A\rightarrow A$ satisfying the following conditions.
\begin{enumerate}
\item\label{321} The triple $(A,\succ_A,\prec_A)$ is {\bf $\calp$-admissible} in the sense that $(A,\circ_A)$ is a $\calp$-algebra, where
$$x\circ_A y:= x\succ_A y+x\prec_A y, \quad \forall x, y\in A.$$
Then $(A,\succ_A,\prec_A)$ is called {\bf compatible on $(A,\circ_A)$},
and $(A,\circ_A)$ is called the {\bf sub-adjacent
$\calp$-algebra} of $(A,\succ_A,\prec_A)$.
\item\label{322} With the linear maps $  L_{\succ_{A}}, R_{\prec_{A}}:A\rightarrow\mathrm{End}_{\mathbb K}(A)$ defined by
\begin{eqnarray*}
    L_{\succ_{A}}(x)y:=x\succ_{A}y,\;
    R_{\prec_{A}}(x)y:=y\prec_{A}x,\;\forall x,y\in A,
\end{eqnarray*}
the triple $( L_{\succ_{A}}, R_{\prec_{A}},A)$ is a representation of $(A,\circ_{A})$.
\end{enumerate}
\end{defi}

Such splitting of an operad reveals finer
properties of $\calp$ which are not expressed in terms of  $\calp$
itself. For example, the commonly used shuffle algebra gains its categorical significance as the free object in the category of pre-commutative algebras (that is, Zinbiel algebras)\,\cite{Lo07}. Likewise, the construction of the free dendriform
algebra by planar binary rooted trees gives the noncommutative
counterpart of the shuffle algebra, usually known as the
Loday-Ronco Hopf algebra \cite{LR} which is also isomorphic to the
noncommutative Connes-Kreimer Hopf algebra~\cite{AS}.
Among its numerous applications in mathematics and
physics \cite{Bai2021.2,Bur,Man}, the pre-Lie algebra as a splitting of the Lie algebra provides a
suitable algebra structure on rooted trees that has appeared in many applications, including vector fields, numerical analysis and
quantum field theory~\cite{Bro,Cay,CK,CL,DL}.

On the other hand, splittings of operads can be achieved by the applications of Rota-Baxter operators and, more generally, the $\calo$-operators (also called relative Rota-Baxter operators). The notion of $\mathcal{O}$-operators was first introduced on Lie algebras as a generalization of the classical Yang-Baxter equation \cite{Ku} and then defined for other algebra structures. For example, an $\calo$-operator of an associative algebra (resp. Lie algebra) gives rise to a dendriform algebra (resp. pre-Lie algebra) \cite{BBGN,PBGN}.
Back to our case of a binary quadratic operad $\calp$ with one operation, we have

\begin{defi}\label{d:oop}
An {\bf
$\mathcal{O}$-operator} of a $\calp$-algebra $(A,\circ_{A})$
associated to a representation $(l_{\circ_{A}},r_{\circ_{A}},V)$
is a linear map $T:V\rightarrow A$ satisfying
\begin{eqnarray}\label{eq:O-operator}
    (Tu)\circ_{A}(Tv)=T\big( l_{\circ_{A}}(Tu)v+r_{\circ_{A}}(Tv)u \big),\;\forall u,v\in V.
\end{eqnarray}
\end{defi}

In fact, having a compatible pre-$\calp$-algebra structure $(A,\succ_A$, $\prec_A)$ on a $\calp$-algebra $(A,\circ_A)$ is characterized as having an invertible $\mathcal O$-operator of $(A,\circ_A)$.
This characterization has a subtle interpretation when the adjoint representation of the $\calp$-algebra has a natural dual
representation (such a $\calp$-algebra is called {\bf proper} in
\cite{Ku2}). That is, for such a proper $\calp$-algebra $(A,\circ_A)$, a compatible pre-$\calp$-algebra structure $(A,\succ_A$,
$\prec_A)$ of $(A,\circ_A)$ is obtained from an invertible $\calo$-operator associated to the dual representation of the adjoint representation of $(A,\circ_A)$, which can be interpreted as a nondegenerate
(usually symmetric or antisymmetric) bilinear form satisfying
certain conditions.
For instance, a Lie algebra equipped with a nondegenerate (antisymmetric) 2-cocycle induces a compatible pre-Lie algebra structure on the Lie algebra \cite{Chu},
and an associative algebra equipped with  a Connes cocycle
induces a compatible dendriform
algebra structure on the associative algebra \cite{Bai2010}.

As we can see below, recent studies have shown the need to consider nondegenerate bilinear forms with other conditions that do not correspond to the classical $\calo$-operators. Thus we have to expand the scope of $\calo$-operators, by allowing more flexible representations.

\subsection{Generalizations of splitting of algebras, $\calo$-operators and  bilinear forms}\label{sec:1.1-}\

From the rapid development in the study of several aspects of
nonassociative algebras, especially in the study of
$\calp$-algebras with specific nondegenerate bilinear forms, several new splittings of $\calp$-algebras for
different kinds of binary quadratic operads $\calp$ have appeared.
For example, the anti-pre-Lie algebras \cite{LB2022}
naturally arose in the study of Witt type Lie algebras, which are obtained from Lie algebras with nondegenerate commutative
$2$-cocycles. Also, in order to lift the fact that an averaging
commutative associative algebra gives rise to a perm algebra to
the bialgebra level, the notion of (special) apre-perm algebras
was introduced in \cite{BGLZ2025}, which are obtained from perm algebras with nondegenerate symmetric left-invariant bilinear forms. In these
cases, the new algebra structures still have the
property \eqref{321} in Definition \ref{d:split}, but the needed representations
in property \eqref{322} have to take the forms of linear combinations
of $ L_{\succ_{A}}$ and $R_{\prec_{A}}$. Also
anti-pre-Lie algebras and (special) apre-perm algebras are
characterized as some analogues of $\mathcal{O}$-operators of Lie algebras and perm algebras respectively.

Motivated by these new phenomena
that require new types of splitting, $\calo$-operators and conditions for nondegenerate bilinear forms,
and with further applications in mind, this study establishes a general notion to split $\calp$-algebras for arbitrary binary quadratic operad $\calp$.
Furthermore, in order to better explore their relationships with $\calp$-algebras, we also would  like to generalize the characterizations of the classical splitting by $\mathcal{O}$-operators to the new splitting.

For a matrix
$M=\begin{pmatrix}
    a_{1} & b_{1}\\
    a_{2} & b_{2}
\end{pmatrix}\in M_{2}(\mathbb{K})$, and maps $f, g$, we use the notation
$$(f,g)M:=(a_1f+a_2g, b_1f+b_2g).$$
To generalize the notion of $\calo$-operators, in their original defining equation~\eqref{eq:O-operator}, we replace
$l_{\circ_A},r_{\circ_A}$ by two linear maps
$\alpha,\beta\rightarrow {\rm End}_{\mathbb K}(A)$ whose linear combination (or its certain dual) gives a representation of $(A,\circ_A)$.

\begin{defi}\label{defi:dual type O-ope-}
    Let  $(A,\circ_{A})$ be a $\mathcal{P}$-algebra. Suppose that $\alpha ,\beta :A\rightarrow\mathrm{End}_{\mathbb K}(V)$ are linear maps and
    $T:V\rightarrow A$ is a linear map that satisfies the following equation:
    \begin{eqnarray*}
        (Tu)\circ_{A}(Tv)=T\big(\alpha(Tu)v+\beta(Tv)u\big),\;\forall u,v\in V.
    \end{eqnarray*}
Let $M\in M_2(\mathbb{K})$.
    \begin{enumerate}
\item If $\big((\alpha ,\beta )M,V \big)= (a_{1}\alpha+a_{2}\beta ,b_{1}\alpha+b_{2}\beta,V)$ is a representation of $(A,\circ_{A})$, then we say that $T$ is a
{\bf \quadco $\mathcal{O}$-operator of $(A,\circ_{A})$ associated to $(\alpha ,\beta ,V )$}.
\item If $\big((\alpha^*,\beta^*)M,V^*\big)= (a_{1}\alpha^*+a_{2}\beta^* ,b_{1}\alpha^*+b_{2}\beta^*,V^*)$ is a representation of $(A,\circ_{A})$,  then we say that $T$ is a {\bf dual
        \quadco $\mathcal{O}$-operator of $(A,\circ_{A})$ associated to $(\alpha ,\beta ,V )$}.
    \end{enumerate}
\end{defi}

Corresponding to (dual) type-$M$ $\mathcal{O}$-operators,
we propose two kinds of new splittings of $\mathcal{P}$-algebras, which generalize pre-$\mathcal{P}$-algebras as the classical splitting of $\mathcal{P}$-algebras.

\begin{defi}\label{defi:1.2}
    Let $(A,\succ_{A},\prec_{A})$ be a $\mathcal{P}$-admissible algebra and $(A,\circ_{A}=\succ_{A}+\prec_{A})$ be the sub-adjacent $\mathcal{P}$-algebra.
Let $M\in M_2(\mathbb{K})$.
    \begin{enumerate}
\item \label{it:a1} If  $\big(( L_{\succ_{A}}, R_{\prec_{A}})M,A\big)
=(a_{1} L_{\succ_{A}}+a_{2} R_{\prec_{A}},$
$b_{1} L_{\succ_{A}}
+b_{2} R_{\prec_{A}},A)$ is a representation of $(A,\circ_{A})$, then we say $(A,\succ_{A},\prec_{A})$ is a {\bf \quadco pre-$\mathcal{P}$-algebra}.
\item\label{it:a2} If  $\big(( L^{*}_{\succ_{A}}, R^{*}_{\prec_{A}})M,A^*\big)
=(a_{1} L^{*}_{\succ_{A}}+a_{2} R^{*}_{\prec_{A}},$ $b_{1} L^{*}_{\succ_{A}}
+b_{2} R^{*}_{\prec_{A}},A^{*})$ is a representation of $(A,\circ_{A})$, then we say $(A,\succ_{A},\prec_{A})$ is a {\bf dual \quadco pre-$\mathcal{P}$-algebra}.
    \end{enumerate}
\end{defi}

Note that the classical pre-$\mathcal P$-algebra in Definition~\ref{d:split} is just the special case when $M$ is the identity matrix
$I=\begin{pmatrix}
    1 & 0\\
    0 & 1
\end{pmatrix}$, that is, a  pre-$\mathcal P$-algebra is exactly a
type-$I$ pre-$\mathcal P$-algebra.
 Moreover, by the results in \cite{BGLZ2025,LB2022}, an anti-pre-Lie algebra is a type-$(-I)$ pre-Lie algebra, as well as a dual type-$(-I)$ pre-Lie algebra where $x\succ_{A}y=-y\prec_{A}x$ for all $x,y\in A$.
An apre-perm algebra is a type-$ \begin{pmatrix}
    1 & 1\\
    1 & 2
\end{pmatrix}$ pre-perm algebra, as well as a dual type-$ \begin{pmatrix}
1 & 0\\
1 & -1
\end{pmatrix}$ pre-perm algebra.

There is a one-to-one correspondence between (dual)  \quadco pre-$\mathcal{P}$-algebras and invertible (dual) \quadco $\mathcal{O}$-operators of $\mathcal{P}$-algebras.
Furthermore, when $M$ is nonsingular, we show that   a $\mathcal{P}$-algebra with a nondegenerate bilinear form $\calb$ satisfying the following {\bf \quadco invariant condition}
\begin{equation}\label{eq:M-inv}
    |M|\mathcal{B}(x\circ_{A}y,z)
    =\mathcal{B}(x,b_{1}y\circ_{A}z-a_{1}z\circ_{A}y)
    +\mathcal{B}(y,a_{2}z\circ_{A}x-b_{2}x\circ_{A}z),\ \forall x, y, z\in A,
\end{equation}
gives rise to an invertible dual \quadco
$\mathcal{O}$-operator and hence renders a compatible dual \quadco
pre-$\mathcal{P}$-algebra.

Therefore the relationships among classical splitting of algebras,
$\mathcal O$-operators and bilinear forms have been generalized to
cases of type-$M$. As an example to illustrate the general setup,
we next consider the operad $\mathcal{P}=Leib$ of {\bf Leibniz
algebras} $(A,\circ_A)$ with the defining identity
\begin{equation}\label{eq:Leibniz} x\circ_{A}(y\circ_{A}z)=(x\circ_{A}y)\circ_{A}z+y\circ_{A}(x\circ_{A}z),\;\;\forall x,y,z\in A.
\end{equation}

The notion of Leibniz algebras originated from the work
\cite{Blo1} of Bloh under the name of D-algebras, and they
were rediscovered and studied in \cite{Lod}. In recent
years, Leibniz algebras have found connections with many
areas such as integration deformation quantization, rational
homotopy theory, higher-order differential operators and higher gauge theories \cite{Bor, Dhe,Kot,Str}.

The operad of Leibniz algebras is the duplicator of the operad of
Lie algebras \cite{GK,PBGN2}, and can be induced from averaging operators.

\begin{pro}\label{ex:Lie aver}\cite{Agu2000*}
Let $P:A\rightarrow A$ be an {\bf averaging operator} on a Lie
algebra $(A,[-,-]_{A})$, that is,
    \begin{equation}\label{eq:Ao}
        [P(x),P(y)]_{A}=P([P(x),y]_{A}),\;\forall x,y\in A.
    \end{equation}
Then there is a Leibniz algebra $(A,\circ_{A})$ with the
multiplication $\circ_{A}$ defined by
    \begin{equation}\label{eq:Leibniz from aver op}
        x\circ_{A}y=[P(x),y]_{A},\;\forall x,y\in A.
    \end{equation}
\end{pro}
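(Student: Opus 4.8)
The plan is to verify the defining identity \eqref{eq:Leibniz} for the bracket $\circ_{A}$ given by \eqref{eq:Leibniz from aver op} directly, reducing it to the Jacobi identity of $(A,[-,-]_{A})$ by a single application of the averaging condition \eqref{eq:Ao}. First I would expand the three terms of \eqref{eq:Leibniz} for arbitrary $x,y,z\in A$. The left-hand side becomes $x\circ_{A}(y\circ_{A}z)=[P(x),[P(y),z]_{A}]_{A}$, and the last summand on the right becomes $y\circ_{A}(x\circ_{A}z)=[P(y),[P(x),z]_{A}]_{A}$. For the remaining summand, using \eqref{eq:Leibniz from aver op} twice, $(x\circ_{A}y)\circ_{A}z=[P(x\circ_{A}y),z]_{A}=[P([P(x),y]_{A}),z]_{A}$.

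The key step is to apply the averaging condition \eqref{eq:Ao} to the inner term, which yields $P([P(x),y]_{A})=[P(x),P(y)]_{A}$, and hence $(x\circ_{A}y)\circ_{A}z=[[P(x),P(y)]_{A},z]_{A}$. Substituting these expressions, the identity \eqref{eq:Leibniz} to be proved becomes
\[
[P(x),[P(y),z]_{A}]_{A}=[[P(x),P(y)]_{A},z]_{A}+[P(y),[P(x),z]_{A}]_{A},
\]
which is precisely the Jacobi identity of $(A,[-,-]_{A})$, written in its left-Leibniz form, evaluated at the three elements $P(x)$, $P(y)$, $z$. Since the Jacobi identity holds for all elements of $A$, this equality holds, and therefore $(A,\circ_{A})$ is a Leibniz algebra.

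I do not expect any genuine obstacle here: the only point requiring a bit of care is that the averaging condition \eqref{eq:Ao} is invoked exactly once, to pull $P$ out of the bracket $[P(x),y]_{A}$, after which the statement collapses to the ambient Jacobi identity with no further input. I would also remark that antisymmetry of $[-,-]_{A}$ is used only insofar as it is already built into the left-Leibniz form of Jacobi, so the same computation proves the analogous statement with any left Leibniz algebra in place of the Lie algebra $(A,[-,-]_{A})$.
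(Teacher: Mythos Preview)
Your proof is correct and is the standard direct verification: one application of the averaging identity \eqref{eq:Ao} turns $(x\circ_A y)\circ_A z$ into $[[P(x),P(y)]_A,z]_A$, and the Leibniz identity then reduces to Jacobi evaluated at $P(x),P(y),z$. The paper does not give its own proof of this proposition; it is quoted as a known result from \cite{Agu2000*}, so there is nothing further to compare.
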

Furthermore, if $(A,[-,-]_{A},\mathcal{B})$ is a {\bf quadratic
Lie algebra} \cite{MeRe} which is a Lie algebra $(A,[-,-]_{A})$
equipped with a nondegenerate symmetric bilinear form
$\mathcal{B}$ that is invariant in the sense that
\begin{eqnarray}
    \mathcal{B}([x,y]_{A},z)=\mathcal{B}(x,[y,z]_{A}),\;\forall x,y,z\in A,
\end{eqnarray}
then $\mathcal{B}$ becomes {\bf left-invariant} on the induced
Leibniz algebra $(A,\circ_{A})$ given by \eqref{eq:Leibniz from
aver op}, in the sense that
\begin{eqnarray}\label{eq:left-inv}
\mathcal{B}(x\circ_{A}y,z)+\mathcal{B}(y,x\circ_{A}z)=0,\;\forall
x,y,z\in A.
\end{eqnarray}
Even though this left-invariant condition~(\ref{eq:left-inv})
does not belong to the family of type-$M$ invariant conditions
given by (\ref{eq:M-inv}), when $\calb$ is symmetric, the condition is equivalent to the condition
\begin{eqnarray}\label{eq:a-type}
\mathcal{B}(x\circ_{A}y,z)=-\mathcal{B}(y,x\circ_{A}z+z\circ_{A}x)
-\mathcal{B}(x,z\circ_{A}y),\;\forall x,y,z\in A,
\end{eqnarray}
which is precisely the type-$b$ invariant condition given by
(\ref{eq:M-inv}), where
$	b:=
	\begin{pmatrix}
		1 & 0 \\
		-1 & 1
	\end{pmatrix}.
$	
Therefore a Leibniz  algebra $(A,\circ_{A})$ equipped with a nondegenerate bilinear form $\calb$
satisfying (\ref{eq:a-type}) gives an invertible dual type-$b$
$ \mathcal O$-operator and hence a dual
type-$b$ pre-Leibniz algebra $(A,\succ_{A},\prec_{A})$. Moreover,
 due to a representation property
observed in \cite{ST}, $(A,\succ_{A},\prec_{A})$ is a  dual
type-$b$ pre-Leibniz algebra if and only if it is  a \ctpla for $a:=\begin{pmatrix}
        1 & -1  \\
        -1 & 0
    \end{pmatrix}
$.
In addition, when the bilinear form $\calb$
is symmetric (and hence is left-invariant in this case)
 the \ctpla $(A,\succ_{A},\prec_{A})$ obtained this way is special in the
sense that the second multiplication $\prec_{A}$ is
anticommutative. Moreover, we show that an averaging Lie algebra
with an admissible condition gives rise to a \sctpla.

\subsection{An application to a bialgebra structure of Leibniz algebras induced from averaging Lie bialgebras}\label{sec:1.3}\

A bialgebra structure is a vector space equipped with an algebra
structure and a coalgebra structure satisfying some compatible
conditions. For instance, a Lie bialgebra \cite{Cha,Dri} is a
vector space equipped with a Lie algebra structure and a Lie
coalgebra structure that are compatible in the sense of the
$1$-cocycle condition. Lie bialgebras have been in the focus of study due to their close relations with Poisson-Lie
groups and the infinitesimalization of quantum groups. There is a
well-known equivalent characterization of a Lie bialgebra, as
a Manin triple of Lie algebras associated to the nondegenerate
symmetric invariant bilinear form. This Manin triple approach has been adapted to bialgebraic studies for numerous algebras, such as
antisymmetric infinitesimal  bialgebras \cite{Bai2010} and pre-Lie bialgebras \cite{Bai2008}.
Central to this approach is to determine the
suitable invariant conditions for the nondegenerate bilinear forms
on the algebras, which are applied in the definition of Manin
triples. In the case of a Leibniz algebra $(A,\circ_{A})$, Chapoton used
the operad theory in \cite{Chap} to determine its (antisymmetric)
invariant  bilinear form, which was used later in the Manin triple of Leibniz algebras associated to the nondegenerate antisymmetric invariant bilinear form, leading to the notion of a  Leibniz bialgebra \cite{BLST,ST}.

With the recent rapid development of algebras equipped with various linear operators, there have been several studies on the
bialgebra structures for algebras with linear operators, which are equivalently interpreted in terms of
Manin triples of the corresponding algebras with the linear
operators satisfying certain compatible conditions  \cite{Bai2021,LLB,Bai2022,BGS}.
Thus it is natural to consider the induced structures of the ``Lie
bialgebras with averaging operators", that is, extending the
construction of Leibniz algebras given by Proposition~\ref{ex:Lie
    aver} to the context of bialgebras. For this purpose, we first
construct a bialgebra theory for averaging Lie algebras (that is,
Lie algebras with averaging operators), namely averaging Lie
bialgebras, which are equivalently characterized by the introduced
notion of Manin triples of averaging Lie algebras.

Since the (nondegenerate and symmetric) bilinear form on the Leibniz algebra induced from a quadratic Lie algebra with an averaging operator is left-invariant, we naturally expect that the induced bialgebra theory is given by Manin triples of Leibniz
algebras associated to the bilinear forms that satisfy such nondegenerate symmetric left-invariant
condition. The one-to-one correspondence between Leibniz algebras
with nondegenerate symmetric left-invariant bilinear forms and
quadratic \sctplas, as a direct application of the general theory
that we establish in Section \ref{sec:1.1-}, is still available in the form of Manin triples. More precisely, there is a one-to-one
correspondence between Manin triples of Leibniz algebras
associated to the nondegenerate symmetric left-invariant bilinear
forms and Manin triples of \sctplas (associated to the
nondegenerate symmetric invariant bilinear forms). Furthermore,
the latter are interpreted as \sctplbs, leading to a new bialgebra
theory for Leibniz algebras.

In summary, we have reached the conclusion that an averaging Lie
bialgebra gives rise to a \sctplb, lifting the connection in Proposition~\ref{ex:Lie aver} that an
averaging Lie algebra gives rise to a Leibniz algebra to the
bialgebra level.

\subsection{Layout of the paper}\

This paper is organized as follows.

In Section \ref{sec:2}, we introduce the general notion of a
(dual) \quadco pre-$\mathcal{P}$-algebra for any matrix $M\in
M_2(\mathbb{K})$. Then we interpret (dual)  \quadco
pre-$\mathcal{P}$-algebras in terms of (dual) \quadco
$\mathcal{O}$-operators of $\mathcal{P}$-algebras. In particular,
a $\mathcal{P}$-algebra with a nondegenerate bilinear form
satisfying type-$M$ invariant condition gives a dual \quadco
$\mathcal{O}$-operator and hence yields a compatible dual \quadco
pre-$\mathcal{P}$-algebra.

In Section \ref{sec:3}, we show that for matrices $a:=\begin{pmatrix}
	1 & -1  \\
	-1 & 0
\end{pmatrix}
$
and $b:=
\begin{pmatrix}
	1 & 0 \\
	-1 & 1
\end{pmatrix}$, a \ctpla is exactly a
dual type-$b$ pre-Leibniz algebra. We give a detailed study on
special \ctplas. The admissible conditions for averaging Lie
algebras are introduced,  inducing \sctplas.

In Section \ref{sec:5}, the induction of Leibniz algebras from averaging Lie algebras is lifted to the level of bialgebras through the notions of \sctplbs and averaging Lie bialgebras.

\smallskip

\noindent
{\bf Notations.}
Throughout this paper, all the vector
spaces and algebras are finite-dimensional over an algebraically
closed field $\mathbb {K}$ of characteristic zero, although many
results and notions remain valid in the infinite-dimensional case.
Moreover, $\mathcal{P}$ denotes a binary quadratic operad with one operation, and $M$ denotes a matrix \begin{math}
 \begin{pmatrix}
    a_{1} & b_{1}\\
    a_{2} & b_{2}
    \end{pmatrix}
\end{math} in $M_2(\mathbb{K})$.
For a vector space $A$, let
$$\tau:A\otimes A\rightarrow A\otimes A,\quad x\otimes y\mapsto y\otimes x,\;\;\;\forall x,y\in A$$
denote the flip operator. For a multiplication $\circ_A:A\otimes
A\rightarrow A$   on $A$, define linear maps ${
L}_{\circ_A}, {R}_{\circ_A}:A\rightarrow {\rm
End}_{\mathbb K}(A)$ respectively by
\begin{eqnarray*}
{ L}_{\circ_A}(x)y:=x\circ_A y,\;\;
{ R}_{\circ_A}(x)y:=y\circ_A x, \;\;\;\forall x, y\in A.
\end{eqnarray*}

Let $A$ and $V$ be vector spaces. For a linear map
$\rho:A\rightarrow\mathrm{End}_{\mathbb K}(V)$, we define the
linear map  $\rho^{*}:A\rightarrow\mathrm{End}_{\mathbb K}(V^{*})$
by
\begin{eqnarray}
\label{eq:dual}    \langle \rho^{*}(x)u^{*},v\rangle=-\langle
u^{*},\rho(x)v\rangle,\;\forall x\in A, u^{*}\in V^{*},v\in V.
\end{eqnarray}

\section{(Dual) \quadco pre-$\mathcal{P}$-algebras
and (dual) \quadco $\mathcal{O}$-operators}\label{sec:2}\

The notions of (dual) \quadco pre-$\calp$-algebra and (dual)
\quadco $\calo$-operators are introduced. There is a compatible
(dual) \quadco pre-$\calp$-algebra of a $\calp$-algebra if and
only if there is an invertible (dual) \quadco $\calo$-operator of
the $\calp$-algebra. In particular, a $\calp$-algebra with a
nondegenerate bilinear form satisfying certain condition gives
 an invertible dual \quadco $\calo$-operator and hence a
compatible dual \quadco pre-$\calp$-algebra.

\subsection{Type-$M$ pre-$\mathcal{P}$-algebras and \quadco $\mathcal{O}$-operators}\label{subsec:2.1}

\begin{defi}\cite{CBG}\label{defi:971}
    Let  $(A,\circ_{A})$ be a $\mathcal{P}$-algebra and $l_{\circ_{A}},r_{\circ_{A}}:A\rightarrow\mathrm{End}_{\mathbb K}(V)$ be linear maps. If the direct sum $A\oplus V$ of vector spaces is again a $\mathcal{P}$-algebra with the multiplication $\circ_{d}$ given by
    \begin{equation}\label{eq:sd Leibniz} (x+u)\circ_{d}(y+v)=x\circ_{A}y+l_{\circ_{A}}(x)v+r_{\circ_{A}}(y)u,
        \;\forall x,y\in A, u,v\in V,
    \end{equation}
    then we call $(l_{\circ_{A}},r_{\circ_{A}},V)$ a {\bf representation} of the $\mathcal{P}$-algebra $(A,\circ_{A})$.
    We denote the $\mathcal{P}$-algebra structure on $A\oplus V$ by $A\ltimes_{l_{\circ_{A}},r_{\circ_{A}}}V$.
    Two representations $(l _{\circ_{A}},r _{\circ_{A}},V)$ and $(l'_{\circ_{A}},r'_{\circ_{A}},V')$ of $(A,\circ_{A})$ are called \textbf{equivalent} if there exists a linear isomorphism $\phi:V\rightarrow V'$ such that the following equations hold:
    \begin{equation}\label{eq:eq Leibniz rep}
        \phi l_{\circ_{A}}(x)=l'_{\circ_{A}}(x)\phi,\; \phi r_{\circ_{A}}(x)=r'_{\circ_{A}}(x)\phi,\;\forall x\in A.
    \end{equation}
\end{defi}

\begin{ex}
Let $(A,\circ_{A})$ be a $\mathcal{P}$-algebra. Then $( L_{\circ_{A}}, R_{\circ_{A}},A)$ is a representation of $(A,\circ_{A})$, which is called the \textbf{adjoint representation} of $(A,\circ_{A})$.
\end{ex}

\begin{defi}
Let $A$ be a vector space with multiplications $\succ_{A}$,
$\prec_{A}:A\otimes A\rightarrow A$ such that
$(A,\succ_{A},\prec_{A})$ is a $\mathcal{P}$-admissible algebra,
that is, $(A,\circ_{A}=\succ_{A}+\prec_{A})$ is a $\calp$-algebra.
If there exists $M\in M_2(\mathbb{K})$ such that
\repequ is a representation of the $\calp$-algebra
$(A,\circ_{A})$, then we say that $(A,\succ_{A},\prec_{A})$ is a
{\bf \quadco pre-$\calp$-algebra}. Moreover, $(A,\succ_{A},\prec_{A})$ is called {\bf compatible} on $(A,\circ_A)$ and $(A,\circ_A)$ is called the {\bf sub-adjacent $\calp$-algebra of $(A,\succ_{A},\prec_{A})$}.
\end{defi}

\begin{defi}\label{defi:O-ope}
Let $(A,\circ_{A})$ be a $\mathcal{P}$-algebra and $M\in M_2(\mathbb{K})$. Suppose that $\alpha ,\beta :A\rightarrow\mathrm{End}_{\mathbb K}(V)$ are linear maps such that $\big((\alpha ,\beta )M,V \big)$ is a representation of $(A,\circ_{A})$.
If a linear map $T:V\rightarrow A$ satisfies
    \begin{eqnarray}
(Tu)\circ_{A}(Tv)=T\big(\alpha(Tu)v+\beta(Tv)u\big),\;\forall u,v\in V,
\label{eq:dual generic O-ope}
    \end{eqnarray}
    then we say that $T$ is a
    {\bf \quadco $\mathcal{O}$-operator of $(A,\circ_{A})$ associated to $(\alpha ,\beta ,V )$}.
    In particular, a
    \quadco $\mathcal{O}$-operator $T$ is called {\bf strong} if there exists a $\mathcal{P}$-algebra structure on $V$ given by
    \begin{equation}
        u\circ_{V}v=\alpha(Tu)v+\beta(Tv)u,\;\forall
        u,v\in V.
    \label{eq:circ}
\end{equation}
\end{defi}

Recall that the classical notion of an {\bf $\calo$-operator} of a $\calp$-algebra $(A,\circ_{A})$ associated to a representation  $(l_{\circ_{A}},r_{\circ_{A}},V)$ is a linear map
$T:V\rightarrow A$ satisfying
\begin{eqnarray}\label{eq:oop}
    (Tu)\circ_{A}(Tv)=T\big( l_{\circ_{A}}(Tu)v+r_{\circ_{A}}(Tv)u \big),\;\forall u,v\in V,
\end{eqnarray}
which first arose on Lie algebras \cite{Ku} in the study of
the classical Yang-Baxter equation. It is clear
that $T$ is an $\calo$-operator of $(A,\circ_{A})$ associated to
$(l_{\circ_{A}},r_{\circ_{A}},V)$ if and only if $T$ is a
type-\begin{math}
    \begin{pmatrix}
        1 & 0 \\
        0 & 1
    \end{pmatrix}
\end{math}
 $\calo$-operator of $(A,\circ_{A})$ associated to $(l_{\circ_{A}},r_{\circ_{A}},V)$.
 Further examples can be found in Examples\,\ref{e:leib1} and \ref{e:leib2}.

\begin{defi}
    Let $(A,\circ_{A})$ be a $\calp$-algebra and $M\in M_2(\mathbb{K})$
such that $|M|\neq 0$.
        If the following equation holds:
        \begin{eqnarray}\label{eq:870-}
            |M|R(x)\circ_{A}R(y)=R\Big(b_{2}R(x)\circ y-a_{2}y\circ_{A}R(x)+a_{1}x\circ_{A}R(y)-b_{1}R(y)\circ_{A}x\Big),\;\forall x,y\in A,
    \end{eqnarray}
then we say $R$ is a {\bf type-$M$ Rota-Baxter operator of $(A,\circ_{A})$}.
A type-$M$ Rota-Baxter operator $R$ of $(A,\circ_{A})$ is called {\bf strong} if $(A,\star_{A})$ is a $\calp$-algebra, where
\begin{eqnarray}\label{eq:1150}
x\star_{A}y=b_{2}R(x)\circ y-a_{2}y\circ_{A}R(x)+a_{1}x\circ_{A}R(y)-b_{1}R(y)\circ_{A}x, \;\forall x,y\in A.
\end{eqnarray}
\end{defi}

\begin{pro}
    Let $(A,\circ_{A})$ be a $\calp$-algebra, $R:A\rightarrow A$ be a linear map and $M\in M_2(\mathbb{K})$
    such that $|M|\neq 0$.
    Then $R$ is a  type-$M$ Rota-Baxter operator of
$(A,\circ_{A})$ if and only if $R$ is a type-$M$
$\mathcal{O}$-operator of $(A,\circ_{A})$ associated to $(
L_{\succ_{A}}, R_{\prec_{A}},A)$, where
$\succ_{A},\prec_{A}:A\otimes A\rightarrow A$ are multiplications
defined by $( L_{\succ_{A}}, R_{\prec_{A}})=( L_{\circ_{A}},
R_{\circ_{A}})M^{-1}$. In this case, $R$ is strong as a type-$M$
Rota-Baxter operator of $(A,\circ_{A})$ if and only if  $R$ is
strong as a type-$M$ $\mathcal{O}$-operator of  $(A,\circ_{A})$
associated to $( L_{\succ_{A}}, R_{\prec_{A}},A)$, that is,
$(A,\diamond_{A})$ is a $\calp$-algebra with the multiplication
$\diamond_{A}:A\otimes A\rightarrow A$ given by
\begin{eqnarray}
    x\diamond_{A}y=R(x)\succ_{A}y+x\prec_{A}R(y),\;\forall x,y\in A.
\label{eq:862-}
\end{eqnarray}
\end{pro}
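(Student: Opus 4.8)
The proof will be a direct unwinding of the definitions, so the plan is mostly bookkeeping. The starting point is the observation that the notation $(f,g)N$ is compatible with matrix multiplication: $\big((f,g)N_1\big)N_2=(f,g)(N_1N_2)$ for all $N_1,N_2\in M_2(\mathbb{K})$, since $(f,g)N$ is simply the row vector $(f,g)$ multiplied by $N$ and the scalar entries commute with the maps $f,g$. Consequently
\[
(L_{\succ_A},R_{\prec_A})M=\big((L_{\circ_A},R_{\circ_A})M^{-1}\big)M=(L_{\circ_A},R_{\circ_A}),
\]
which is the adjoint representation of $(A,\circ_A)$. In particular the hypothesis of Definition~\ref{defi:O-ope} is automatically satisfied, so it is meaningful to ask whether $R$ is a type-$M$ $\mathcal{O}$-operator of $(A,\circ_A)$ associated to $(L_{\succ_A},R_{\prec_A},A)$.

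Next I would write out $\succ_A$ and $\prec_A$ explicitly. Using $M^{-1}=\frac{1}{|M|}\begin{pmatrix} b_2 & -b_1\\ -a_2 & a_1\end{pmatrix}$ together with the definition of $(f,g)N$, one gets
\[
x\succ_A y=\tfrac{1}{|M|}\big(b_2\,x\circ_A y-a_2\,y\circ_A x\big),\qquad x\prec_A y=\tfrac{1}{|M|}\big(a_1\,x\circ_A y-b_1\,y\circ_A x\big),\quad\forall x,y\in A.
\]
Substituting these into the defining equation~\eqref{eq:dual generic O-ope} of a type-$M$ $\mathcal{O}$-operator with $T=R$, $V=A$, $\alpha=L_{\succ_A}$, $\beta=R_{\prec_A}$, the right-hand side becomes $R\big(R(x)\succ_A y+x\prec_A R(y)\big)=\tfrac{1}{|M|}R\big(b_2R(x)\circ_A y-a_2y\circ_A R(x)+a_1x\circ_A R(y)-b_1R(y)\circ_A x\big)$; multiplying through by the nonzero scalar $|M|$ then turns \eqref{eq:dual generic O-ope} into precisely \eqref{eq:870-}. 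Hence $R$ is a type-$M$ Rota-Baxter operator of $(A,\circ_A)$ if and only if $R$ is a type-$M$ $\mathcal{O}$-operator of $(A,\circ_A)$ associated to $(L_{\succ_A},R_{\prec_A},A)$.

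For the ``strong'' assertion I would note that the candidate $\calp$-algebra structure on $V=A$ attached to such an $\mathcal{O}$-operator in Definition~\ref{defi:O-ope}, namely $u\circ_V v=L_{\succ_A}(Ru)v+R_{\prec_A}(Rv)u=R(u)\succ_A v+u\prec_A R(v)$, is exactly the multiplication $\diamond_A$ of \eqref{eq:862-}; and by the same computation as above, $x\diamond_A y=\tfrac{1}{|M|}\big(x\star_A y\big)$ for all $x,y\in A$, where $\star_A$ is given by \eqref{eq:1150}. Since $\calp$ is a binary quadratic operad with one operation, its defining relations are homogeneous of degree two in that operation, so rescaling the operation by the nonzero scalar $\tfrac{1}{|M|}$ sends a $\calp$-algebra to a $\calp$-algebra (and back); therefore $(A,\star_A)$ is a $\calp$-algebra if and only if $(A,\diamond_A)$ is, which gives the second equivalence. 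I do not anticipate a genuine obstacle: the whole argument is transporting $M$ and $M^{-1}$ through the definitions, and the only step that is not a purely formal manipulation is this last scaling-invariance of the $\calp$-algebra axioms, which is itself immediate from the hypothesis that $\calp$ is quadratic.
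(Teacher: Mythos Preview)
Your proposal is correct and follows essentially the same approach as the paper's proof: both compute $\succ_A,\prec_A$ explicitly from $M^{-1}$, check that the $\mathcal{O}$-operator equation~\eqref{eq:dual generic O-ope} for $(L_{\succ_A},R_{\prec_A},A)$ is $|M|^{-1}$ times the Rota-Baxter equation~\eqref{eq:870-}, and then observe $\star_A=|M|\,\diamond_A$ to handle the strong case. You add two small clarifications the paper leaves implicit---that $(L_{\succ_A},R_{\prec_A})M$ recovers the adjoint representation (so the $\mathcal{O}$-operator definition applies), and that quadraticity of $\calp$ makes the axioms scale-invariant---but the argument is otherwise identical.
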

\begin{proof}
    Suppose that $R$ is a type-$M$ Rota-Baxter operator of
    $(A,\circ_{A})$. Define two multiplications
$\succ_{A},\prec_{A}:A\otimes A\rightarrow A$ by $( L_{\succ_{A}},
R_{\prec_{A}})=( L_{\circ_{A}}, R_{\circ_{A}})M^{-1}$. That is,
 \begin{eqnarray}\label{eq:1176}
        x\succ_{A}y=|M|^{-1}\big(  b_{2}x\circ_{A}y-a_{2}y\circ_{A}x \big),\;
        x\prec_{A}y=|M|^{-1}\big(  a_{1}x\circ_{A}y-b_{1}y\circ_{A}x \big),\;\forall x,y\in A.
    \end{eqnarray} Moreover, by \eqref{eq:870-}, the following
equation holds
\begin{equation}\label{eq:857-}
R(x)\circ_{A}R(y)=R(R(x)\succ_{A}y+x\prec R(y)),\;\forall x,y\in
A,
\end{equation}
that is, $R$ is a type-$M$ $\calo$-operator of $(A,\circ_{A})$
associated to $( L_{\succ_{A}}, R_{\prec_{A}},A)$. Conversely,
suppose that $( L_{\succ_{A}}, R_{\prec_{A}})=( L_{\circ_{A}},
R_{\circ_{A}})M^{-1}$ and $R$ is a type-$M$ $\calo$-operator of
$(A,\circ_{A})$ associated to $( L_{\succ_{A}}, R_{\prec_{A}},A)$,
that is, \eqref{eq:1176} and \eqref{eq:857-} hold. Then
\eqref{eq:870-} holds. Hence the first half part holds. Moreover,
note that for all $x,y\in A$, we have
\begin{eqnarray*} x\star_{A}y&\overset{\eqref{eq:1150}}{=}&b_{2}R(x)\circ_{A}y-a_{2}y\circ_{A}R(x)+a_{1}x\circ_{A}R(y)-b_{1}R(y)\circ_{A}x\\
    &\overset{\eqref{eq:1176}}{=}&|M|\big(
    R(x)\succ_{A}y+x\prec_{A}R(y)\big)\overset{\eqref{eq:862-}}{=}|M|x\diamond_{A}y.
\end{eqnarray*}
Thus $(A,\star_{A})$ is a $\calp$-algebra if and only if
$(A,\diamond_{A})$ is a $\calp$-algebra. Hence the second half
part holds.
    \end{proof}

\begin{pro}\label{pro:1-}
    Let  $(A,\circ_{A})$ be a $\mathcal{P}$-algebra. Suppose that
    $T:V\rightarrow A$ is a \quadco $\mathcal{O}$-operator of $(A,\circ_{A})$ associated to $(\alpha ,\beta ,V )$. Define the multiplications $\succ_{V}$,
    $\prec_{V}$ on $V$ by
    \begin{eqnarray}
u\succ_{V} v:=\alpha(Tu)v,\; u\prec_{V} v:=\beta(Tv)u,\;\forall u,v\in V.
    \label{eq:P-alg1}
\end{eqnarray}
    Then $(V,\succ_{V},\prec_{V})$ is a \quadco pre-$\mathcal{P}$-algebra if and only if $T$ is strong.
\end{pro}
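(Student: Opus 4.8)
The plan is to unravel both conditions into the same system of identities on $V$. Recall that, by Definition~\ref{defi:O-ope}, $T$ is \emph{strong} precisely when the bracket
$$u\circ_{V}v=\alpha(Tu)v+\beta(Tv)u,\quad\forall u,v\in V,$$
endows $V$ with a $\calp$-algebra structure; in the notation of \eqref{eq:P-alg1} this says exactly that $\circ_{V}=\succ_{V}+\prec_{V}$, so the triple $(V,\succ_{V},\prec_{V})$ is $\calp$-admissible. On the other hand, $(V,\succ_{V},\prec_{V})$ being a \quadco pre-$\calp$-algebra requires two things: first that it be $\calp$-admissible (i.e.\ $(V,\circ_{V})$ is a $\calp$-algebra), and second that $\big((L_{\succ_{V}},R_{\prec_{V}})M,V\big)$ be a representation of $(V,\circ_{V})$. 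So the forward direction is immediate from the definition of strong, once we show the representation condition holds automatically; and the backward direction needs only the admissibility half of the pre-$\calp$-algebra hypothesis.

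First I would record the key computation linking the structure maps on $V$ to those on $A$ through $T$. From \eqref{eq:P-alg1} we have $L_{\succ_{V}}(u)=\alpha(Tu)$ and $R_{\prec_{V}}(u)=\beta(Tu)$ for all $u\in V$, hence
$$\big(L_{\succ_{V}},R_{\prec_{V}}\big)M=\big(a_{1}\alpha+a_{2}\beta,\;b_{1}\alpha+b_{2}\beta\big)\circ T=\big((\alpha,\beta)M\big)\circ T.$$
Now the hypothesis that $T$ is a \quadco $\calo$-operator includes (via Definition~\ref{defi:O-ope}) that $\big((\alpha,\beta)M,V\big)$ is a representation of $(A,\circ_{A})$, i.e.\ $A\ltimes_{(\alpha,\beta)M}V$ is a $\calp$-algebra, together with the defining equation \eqref{eq:dual generic O-ope}. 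The standard fact about $\calo$-operators — which I would either cite from the earlier development or reprove in one line — is that \eqref{eq:dual generic O-ope} is exactly the condition that the graph $\{Tu+u\mid u\in V\}$ be a $\calp$-subalgebra of $A\ltimes_{(\alpha,\beta)M}V$; transporting the $\calp$-structure along the isomorphism $u\mapsto Tu+u$ from $V$ to this subalgebra, the induced bracket on $V$ is precisely $(\alpha,\beta)M$ applied appropriately, and a representation of the transported algebra is given by $(\alpha,\beta)M\circ T=\big(L_{\succ_{V}},R_{\prec_{V}}\big)M$.

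Assembling: if $T$ is strong, then $(V,\circ_{V})$ is a $\calp$-algebra by definition, and by the graph argument the transported structure on $V$ coincides with $(V,\circ_{V})$ and comes equipped with the representation $\big(L_{\succ_{V}},R_{\prec_{V}}\big)M$ of it; hence $(V,\succ_{V},\prec_{V})$ is a \quadco pre-$\calp$-algebra. Conversely, if $(V,\succ_{V},\prec_{V})$ is a \quadco pre-$\calp$-algebra, then in particular $(V,\circ_{V}=\succ_{V}+\prec_{V})$ is a $\calp$-algebra, and since $\circ_{V}$ is exactly the bracket of \eqref{eq:circ}, $T$ is strong. The main obstacle, and the step that deserves care, is the graph-subalgebra identification: one must check that the $\calp$-algebra axioms for the transported structure on $V$ are genuinely equivalent to the $\calo$-operator equation \eqref{eq:dual generic O-ope} together with the representation property of $\big((\alpha,\beta)M,V\big)$, and that the transported structure maps match $L_{\succ_{V}},R_{\prec_{V}}$. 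For a general binary quadratic operad $\calp$ this is the "folklore" equivalence between strong $\calo$-operators and induced pre-structures; if it has not been established earlier in the paper in the required generality, I would insert a short lemma proving it by writing the multiplication $\circ_{d}$ on $A\oplus V$ from \eqref{eq:sd Leibniz} with $(l_{\circ_{A}},r_{\circ_{A}})=(\alpha,\beta)M$, restricting to the graph, and comparing with \eqref{eq:circ} and \eqref{eq:P-alg1}.
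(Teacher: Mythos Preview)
Your backward direction is fine, and the key identity $(L_{\succ_{V}},R_{\prec_{V}})M=((\alpha,\beta)M)\circ T$ is exactly the right observation. The gap is in the graph argument you invoke for the forward direction. In the semidirect product $A\ltimes_{(\alpha,\beta)M}V$ the multiplication is
\[
(Tu+u)\circ_{d}(Tv+v)=Tu\circ_{A}Tv+(a_{1}\alpha+a_{2}\beta)(Tu)v+(b_{1}\alpha+b_{2}\beta)(Tv)u,
\]
so the graph $\{Tu+u\}$ is closed if and only if $Tu\circ_{A}Tv=T\big((a_{1}\alpha+a_{2}\beta)(Tu)v+(b_{1}\alpha+b_{2}\beta)(Tv)u\big)$. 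This is \emph{not} the type-$M$ $\mathcal{O}$-operator equation \eqref{eq:dual generic O-ope}, which reads $Tu\circ_{A}Tv=T(\alpha(Tu)v+\beta(Tv)u)$, unless $M=I$. Consequently the transported bracket on $V$ is $\rho(Tu)v+\mu(Tv)u$ and in general does \emph{not} coincide with $\circ_{V}=\alpha(Tu)v+\beta(Tv)u$; the ``folklore'' graph lemma applies only to the classical case.

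The fix is simpler than the graph argument and is already implicit in your key identity: when $T$ is strong, equation \eqref{eq:dual generic O-ope} says precisely that $T:(V,\circ_{V})\to(A,\circ_{A})$ is a $\mathcal{P}$-algebra homomorphism. Since $((\alpha,\beta)M,V)$ is a representation of $(A,\circ_{A})$, its pullback $((\alpha,\beta)M\circ T,V)=((L_{\succ_{V}},R_{\prec_{V}})M,V)$ is a representation of $(V,\circ_{V})$, and you are done. The paper carries out this pullback by a direct computation: it writes a generic quadratic relation of $\mathcal{P}$ with twelve coefficients, records the three induced representation identities, and then verifies by hand that $(V\oplus V,\circ_{d})$ with $\circ_{d}$ built from $\circ_{V}$ and $(\rho\circ T,\mu\circ T)$ satisfies the relation, using that $T(u\circ_{V}v)=Tu\circ_{A}Tv$ to reduce to the representation identities for $(\rho,\mu)$ on $A$. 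That brute-force route and the pullback-along-homomorphism argument are the same content; your graph idea, however, is genuinely different and does not go through for general $M$.
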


\begin{proof}
Suppose that $(V,\succ_{V},\prec_{V})$ is a
\quadco pre-$\mathcal{P}$-algebra. Then the sub-adjacent $\mathcal{P}$-algebra is exactly $(V,\circ_{V})$ with $\circ_{V}$ defined by \eqref{eq:circ}. Hence $T$ is strong.

Conversely, suppose that $T$ is a strong \quadco
$\mathcal{O}$-operator of $(A,\circ_{A})$ associated to $(\alpha
,\beta ,V )$. Then $(V,\circ_{V})$ with $\circ_{V}$ defined by \eqref{eq:circ} is a $\mathcal{P}$-algebra.
Let $r$ be a quadratic relation satisfied by $\circ_A$. Then there are $k_i\in \mathbb{K}, 1\leq i\leq 12$ such that $r$ is of the form \cite{LV,Val}
\begin{equation}\label{eq:877}
\begin{split}
r=&k_{1}(x\circ_{A}y)\circ_{A}z+k_{2}(x\circ_{A}z)\circ_{A}y
+k_{3}x\circ_{A}(y\circ_{A}z)+k_{4}x\circ_{A}(z\circ_{A}y)\\
&+k_{5}(y\circ_{A}z)\circ_{A}x+k_{6}(y\circ_{A}x)\circ_{A}z
+k_{7}y\circ_{A}(z\circ_{A}x)+k_{8}y\circ_{A}(x\circ_{A}z)\\
&+k_{9}(z\circ_{A}x)\circ_{A}y+k_{10}(z\circ_{A}y)\circ_{A}x
+k_{11}z\circ_{A}(x\circ_{A}y)+k_{12}z\circ_{A}(y\circ_{A}x)\\=&0,\quad \forall x,y,z\in A.\;\;\;\;
\end{split}
\end{equation}
Then for any representation $(l_{\circ_{A}},r_{\circ_{A}},V)$ of $(A,\circ_{A})$, the following equations hold:
    \begin{eqnarray}
        &&k_{1}l_{\circ_{A}}(x\circ_{A}y)v
        +k_{2}r_{\circ_{A}}(y)l_{\circ_{A}}(x)v
        +k_{3}l_{\circ_{A}}(x)l_{\circ_{A}}(y)v
        +k_{4}l_{\circ_{A}}(x)r_{\circ_{A}}(y)v\nonumber\\
        &&
        +k_{5}r_{\circ_{A}}(x)l_{\circ_{A}}(y)v
        +k_{6}l_{\circ_{A}}(y\circ_{A}x)v
        +k_{7}l_{\circ_{A}}(y)r_{\circ_{A}}(x)v
        +k_{8}l_{\circ_{A}}(y)l_{\circ_{A}}(x)v\nonumber\\
        &&+k_{9}r_{\circ_{A}}(y)r_{\circ_{A}}(x)v
        +k_{10}r_{\circ_{A}}(x)r_{\circ_{A}}(y)v
        +k_{11}r_{\circ_{A}}(x\circ_{A}y)v
        +k_{12}r_{\circ_{A}}(y\circ_{A}x)v=0,\label{rep1-}\\
        &&k_{1}r_{\circ_{A}}(y)l_{\circ_{A}}(x)v
        +k_{2}l_{\circ_{A}}(x\circ_{A}y)v
        +k_{3}l_{\circ_{A}}(x)r_{\circ_{A}}(y)v
        +k_{4}l_{\circ_{A}}(x)l_{\circ_{A}}(y)v\nonumber\\
        &&+k_{5}r_{\circ_{A}}(x)r_{\circ_{A}}(y)v
        +k_{6}r_{\circ_{A}}(y)r_{\circ_{A}}(x)v\nonumber
        +k_{7}r_{\circ_{A}}(y\circ_{A}x)v
        +k_{8}r_{\circ_{A}}(x\circ_{A}y)v\nonumber\\
        &&+k_{9}l_{\circ_{A}}(y\circ_{A}x)v
        +k_{10}r_{\circ_{A}}(x)l_{\circ_{A}}(y)v
        +k_{11}l_{\circ_{A}}(y)l_{\circ_{A}}(x)v
    +k_{12}l_{\circ_{A}}(y)r_{\circ_{A}}(x)v=0,\label{rep2-}\\
        &&k_{1}r_{\circ_{A}}(x)r_{\circ_{A}}(y)v
        +k_{2}r_{\circ_{A}}(y)r_{\circ_{A}}(x)v
        +k_{3}r_{\circ_{A}}(y\circ_{A}x)v
        +k_{4}r_{\circ_{A}}(x\circ_{A}y)v\nonumber\\
        &&+k_{5}l_{\circ_{A}}(y\circ_{A}x)v
        +k_{6}r_{\circ_{A}}(x)l_{\circ_{A}}(y)v
        +k_{7}l_{\circ_{A}}(y)l_{\circ_{A}}(x)v
        +k_{8}l_{\circ_{A}}(y)r_{\circ_{A}}(x)v\nonumber\\
        &&+k_{9}r_{\circ_{A}}(y)l_{\circ_{A}}(x)v
        +k_{10}l_{\circ_{A}}(x\circ_{A}y)v
        +k_{11}l_{\circ_{A}}(x)r_{\circ_{A}}(y)v
        +k_{12}l_{\circ_{A}}(x)l_{\circ_{A}}(y)v=0,\label{rep3-}
    \end{eqnarray}
for all $x,y\in A,v\in V$.

Define a multiplication on $V\oplus V$ by
    \begin{eqnarray*}
        (u+p)\circ_{d}(v+q)&:=&u\circ_{V}v
        +(a_{1} L_{\succ_{V}}+a_{2} R_{\prec_{V}})(u)q
        +(b_{1} L_{\succ_{V}}+b_{2} R_{\prec_{V}})(v)p \nonumber\\
        &\overset{\eqref{eq:P-alg1}}{=}&u\circ_{V}v
        +\rho(Tu)q+\mu(Tv)p,\;\forall u,v,p,q\in V,
    \end{eqnarray*}
    where
$$\rho:= a_{1}\alpha +a_{2}\beta,\quad \mu:=b_{1}\alpha +b_{2}\beta.$$
Let $u,v,w,p,q,t\in V$. Then we have
\begin{eqnarray*}
&&      \big( (u+p)\circ_{d}(v+q) \big)\circ_{d}(w+t)\\
        &=&\big( u\circ_{V}v+\rho(Tu)q+\mu (Tv)p\big)\circ_{d}(w+t)\\
        &=&(u\circ_{V}v)\circ_{V}w+\rho T(u\circ_{V}v)t
        +\mu(TPW)\rho(Tu)q+\mu(Tw)\mu(Tv)p\\
        &\overset{\eqref{eq:dual generic O-ope},\eqref{eq:circ}}{=}&(u\circ_{V}v)\circ_{V}w
        +\rho(Tu\circ_{A}Tv)t+\mu(Tw)\rho(Tu)q+\mu(Tw)\mu(Tv)p,
    \end{eqnarray*}
    and similarly
    \begin{align*}
        &(u+p)\circ_{d}\big((v+q) \circ_{d}(w+t)\big)\\
        &=u\circ_{V}(v\circ_{V}w)
        +\rho(Tu)\rho(Tv)t+\rho(Tu)\mu(Tw)q+\mu(Tv\circ_{A}Tw)p.
    \end{align*}
By the assumption on $T$, the triple $(\rho=a_{1}\alpha+a_{2}\beta, \mu=b_{1}\alpha+b_{2}\beta,V)$ is a representation of $(A,\circ_{A})$. Thus applying the above two equalities we have
    \begin{eqnarray*}
        && k_{1}\big( (u+p)\circ_{d}(v+q) \big)\circ_{d}(w+t)+k_{2}\big( (u+p)\circ_{d}(w+t) \big)\circ_{d}(v+q)\\
        &&
        +k_{3}(u+p)\circ_{d}\big((v+q) \circ_{d}(w+t)\big)+k_{4}(u+p)\circ_{d}\big( (w+t)\circ_{d}(v+q)\big)\\
        &&+k_{5}\big( (v+q)\circ_{d}(w+t) \big)\circ_{d}(u+p)+k_{6}\big( (v+q)\circ_{d}(u+p) \big)\circ_{d}(w+t)\\
        &&
        +k_{7}(v+q)\circ_{d}\big((w+t) \circ_{d}(u+p)\big)+k_{8}(v+q)\circ_{d}\big( (u+p)\circ_{d}(w+t)\big)\\
        &&+k_{9}\big( (w+t)\circ_{d}(u+p) \big)\circ_{d}(v+q)+k_{10}\big( (w+t)\circ_{d}(v+q) \big)\circ_{d}(u+p)\\
        &&
        +k_{11}(w+t)\circ_{d}\big((u+p) \circ_{d}(v+q)\big)+k_{12}(w+t)\circ_{d}\big( (v+q)\circ_{d}(u+p)\big)\\
        &=&k_{1}(u\circ_{V}v)\circ_{V}w+k_{1}\rho(Tu\circ_{A}Tv)t
        +k_{1}\mu(Tw)\rho(Tu)q+k_{1}\mu(Tw)\mu(Tv)p\\
        && +k_{2}(u\circ_{V}w)\circ_{V}v+k_{2}\mu(Tv)\rho(Tu)t
        +k_{2}\rho(Tu\circ_{A}Tw)q+k_{2}\mu(Tv)\mu(Tw)p\\
        && +k_{3}u\circ_{V}(v\circ_{V}w)+k_{3}\rho(Tu)\rho(Tv)t
        +k_{3}\rho(Tu)\mu(Tw)q+k_{3}\mu(Tv\circ_{A}Tw)p\\
        && +k_{4}u\circ_{V}(w\circ_{V}v)+k_{4}\rho(Tu)\mu(Tv)t
        +k_{4}\rho(Tu)\rho(Tw)q+k_{4}\mu(Tw\circ_{A}Tv)p\\
        && +k_{5}(v\circ_{V}w)\circ_{V}u+k_{5}\mu(Tu)\rho(Tv)t
        +k_{5}\mu(Tu)\mu(Tw)q+k_{5}\rho(Tv\circ_{A}Tw)p\\
        &&+k_{6}(v\circ_{V}u)\circ_{V}w+k_{6}\rho(Tv\circ_{A}Tu)t
        +k_{6}\mu(Tw)\mu(Tu)q+k_{6}\mu(Tw)\rho(Tv)p\\
        && +k_{7}v\circ_{V}(w\circ_{V}u)+k_{7}\rho(Tv)\mu(Tu)t
        +k_{7}\mu(Tw\circ_{A}Tu)q+k_{7}\rho(Tv)\rho(Tw)p\\
        && +k_{8}v\circ_{V}(u\circ_{V}w)+k_{8}\rho(Tv)\rho(Tu)t
        +k_{8}\mu(Tu\circ_{A}Tw)q+k_{8}\rho(Tv)\mu(Tw)p\\
        && +k_{9}(w\circ_{V}u)\circ_{V}v+k_{9}\mu(Tv)\mu(Tu)t
        +k_{9}\rho(Tw\circ_{A}Tu)q+k_{9}\mu(Tv)\rho(Tw)p\\
        && +k_{10}(w\circ_{V}v)\circ_{V}u+k_{10}\mu(Tu)\mu(Tv)t
        +k_{10}\mu(Tu)\rho(Tw)q+k_{10}\rho(Tw\circ_{A}Tv)p\\
       &&+k_{11}w\circ_{V}(u\circ_{V}v)+k_{11}\mu(Tu\circ_{A}Tv)t
        +k_{11}\rho(Tw)\rho(Tu)q+k_{11}\rho(Tw)\mu(Tv)p\\
       &&+k_{12}w\circ_{V}(v\circ_{V}u)+k_{12}\mu(Tv\circ_{A}Tu)t
       +k_{12}\rho(Tw)\mu(Tu)q+k_{12}\rho(Tw)\rho(Tv)p\\
        &{=}&0.
    \end{eqnarray*}
Here the last equality follows from applying each of the four equalities \eqref{eq:877}-\eqref{rep3-} to each of the four columns.
    Thus $(V\oplus V,\circ_{d})$ is also a $\mathcal{P}$-algebra. Hence $\big(( L_{\succ_{V}}, R_{\prec_{V}})M,V  \big)$ is a representation of $(V,\circ_{V})$.
    Therefore $(V,\succ_{V},\prec_{V})$ is a type-$M$ pre-$\mathcal{P}$-algebra.
\end{proof}

Moreover, the $\calo$-operator has the following desired property, which has already been observed in \cite[Thm. 4.4.(b)]{BBGN} for Rota-Baxter operators in the non-relative case.
\begin{pro}\label{pro:notation}
An $\mathcal{O}$-operator in the classical sense is automatically strong.
\end{pro}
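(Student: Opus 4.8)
The plan is to exhibit $(V,\circ_V)$ as a $\calp$-subalgebra of a semidirect product $\calp$-algebra, using the graph of the $\calo$-operator $T$. Note first that an $\calo$-operator of $(A,\circ_A)$ associated to a representation $(l_{\circ_A},r_{\circ_A},V)$ in the classical sense of \eqref{eq:oop} is exactly a type-$I$ $\calo$-operator associated to $(\alpha,\beta,V)$ with $\alpha=l_{\circ_A}$ and $\beta=r_{\circ_A}$, for which $(\alpha,\beta)I=(l_{\circ_A},r_{\circ_A},V)$ is already a representation of $(A,\circ_A)$. Hence, by \eqref{eq:circ}, asserting that $T$ is strong amounts to showing that the multiplication $u\circ_V v:=l_{\circ_A}(Tu)v+r_{\circ_A}(Tv)u$ makes $V$ into a $\calp$-algebra.

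To prove this, I would first invoke Definition~\ref{defi:971} to form the semidirect product $\calp$-algebra $A\ltimes_{l_{\circ_A},r_{\circ_A}}V$ on $A\oplus V$ with multiplication $\circ_d$ as in \eqref{eq:sd Leibniz}; this is a $\calp$-algebra precisely because $(l_{\circ_A},r_{\circ_A},V)$ is a representation. Then I would introduce the graph $\mathrm{Gr}(T):=\{\,Tu+u\mid u\in V\,\}\subseteq A\oplus V$ and check, using the defining identity \eqref{eq:oop}, that
$$(Tu+u)\circ_d(Tv+v)=(Tu)\circ_A(Tv)+l_{\circ_A}(Tu)v+r_{\circ_A}(Tv)u=T(u\circ_V v)+u\circ_V v,$$
which lies in $\mathrm{Gr}(T)$. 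Thus $\mathrm{Gr}(T)$ is closed under $\circ_d$. Since the defining relations of a binary quadratic operad $\calp$ are multilinear identities of the shape \eqref{eq:877} that hold on the whole of $A\oplus V$, they continue to hold on the subspace $\mathrm{Gr}(T)$, so $(\mathrm{Gr}(T),\circ_d)$ is itself a $\calp$-algebra.

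Finally, the assignment $\iota\colon V\to\mathrm{Gr}(T)$, $u\mapsto Tu+u$, is a linear isomorphism, and the displayed computation says exactly that $\iota(u\circ_V v)=\iota(u)\circ_d\iota(v)$; that is, $\iota$ is an algebra isomorphism onto the $\calp$-algebra $\mathrm{Gr}(T)$. Transporting the $\calp$-structure back along $\iota$ then shows that $(V,\circ_V)$ is a $\calp$-algebra, i.e., $T$ is strong. There is no serious obstacle here: the only two facts used beyond the $\calo$-operator equation are that a subspace closed under the product of a $\calp$-algebra inherits the $\calp$-structure, and that a multiplication-preserving linear bijection transports the $\calp$-structure --- both immediate from the multilinearity of the defining relations. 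One could alternatively avoid the graph entirely and substitute $\circ_V$ directly into an arbitrary relation \eqref{eq:877}, repeatedly using \eqref{eq:oop} together with the representation identities \eqref{rep1-}--\eqref{rep3-} as in the proof of Proposition~\ref{pro:1-}; the graph argument is just a clean packaging of that bookkeeping.
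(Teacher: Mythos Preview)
Your proof is correct, but it follows a different route from the paper's. The paper proceeds by direct substitution: it expands $(u\circ_V v)\circ_V w$ and $u\circ_V(v\circ_V w)$ using the $\calo$-operator identity~\eqref{eq:oop}, then feeds the results into an arbitrary defining relation~\eqref{eq:877} and checks that the twelve-term sum vanishes by invoking the representation identities~\eqref{rep1-}--\eqref{rep3-}. You instead use the graph argument, embedding $V$ as the subalgebra $\mathrm{Gr}(T)$ of the semidirect product $A\ltimes_{l_{\circ_A},r_{\circ_A}}V$ and transporting the $\calp$-structure along the bijection $u\mapsto Tu+u$. Your approach is more conceptual and avoids the term-by-term bookkeeping; the paper's direct verification, on the other hand, parallels the computation already carried out in Proposition~\ref{pro:1-} and so fits the surrounding exposition more naturally. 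You even flag the paper's method yourself in your final sentence.
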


\begin{proof}
We follow the same notations as in the proof of
Proposition\,\ref{pro:1-}. Let $T:V\rightarrow A$ be an
$\mathcal{O}$-operator of a $\calp$-algebra $(A,\circ_{A})$
associated to a representation $(l_{\circ_{A}},r_{\circ_{A}},V)$.
    Suppose that the $\mathcal{P}$-algebra given by the binary quadratic operad $\mathcal{P}$ contains \eqref{eq:877} as one of the identities. Then \eqref{rep1-}-\eqref{rep3-} hold.
    Let $u,v,w\in V$. Then we have
    \begin{eqnarray*} (u\circ_{V}v)\circ_{V}w&=&\big(l_{\circ_{A}}(Tu)v+r_{\circ_{A}}(Tv)u\big)\circ_{V}w\\
        &=&l_{\circ_{A}}T\big(l_{\circ_{A}}(Tu)v+r_{\circ_{A}}(Tv)u\big)w
        +r_{\circ_{A}}(Tw)l_{\circ_{A}}(Tu)v+r_{\circ_{A}}(Tw)r_{\circ_{A}}(Tv)u\\
        &\overset{\eqref{eq:oop}}{=}&l_{\circ_{A}}(Tu\circ_{A}Tv)w
        +r_{\circ_{A}}(Tw)l_{\circ_{A}}(Tu)v+r_{\circ_{A}}(Tw)r_{\circ_{A}}(Tv)u,\\
        u\circ_{V}(v\circ_{V}w)&=&u\circ_{V}\big( l_{\circ_{A}}(Tu)v+r_{\circ_{A}}(Tv)u \big)\\
        &=&l_{\circ_{A}}(Tu)l_{\circ_{A}}(Tv)w+l_{\circ_{A}}(Tu)r_{\circ_{A}}(Tw)v
        +r_{\circ_{A}}T\big(l_{\circ_{A}}(Tv)w+r_{\circ_{A}}(Tw)v\big)u\\
        &\overset{\eqref{eq:oop}}{=}&l_{\circ_{A}}(Tu)l_{\circ_{A}}(Tv)w
        +l_{\circ_{A}}(Tu)r_{\circ_{A}}(Tw)v+r_{\circ_{A}}(Tv\circ_{A}Tw)u.
    \end{eqnarray*}
    Thus by \eqref{rep1-}-\eqref{rep3-}, we have
    \begin{eqnarray*}
        &&k_{1}(u\circ_{V}v)\circ_{V}w+k_{2}    (u\circ_{V}w)\circ_{V}v+k_{3}u\circ_{V}(v\circ_{V}w)
        +k_{4}u\circ_{V}(w\circ_{V}v)\\
        &&+k_{5}(v\circ_{V}w)\circ_{V}u+k_{6}   (v\circ_{V}u)\circ_{V}w+k_{7}v\circ_{V}(w\circ_{V}u)
        +k_{8}v\circ_{V}(u\circ_{V}w)\\
        &&+k_{9}(w\circ_{V}u)\circ_{V}v+k_{10}  (w\circ_{V}v)\circ_{V}u+k_{11}w\circ_{V}(u\circ_{V}v)
        +k_{12}w\circ_{V}(v\circ_{V}u)=0.
    \end{eqnarray*}
  Therefore $(V,\circ_{V})$ is a $\calp$-algebra and hence $T$ is strong.
\end{proof}

\begin{pro}\label{pro:strong1-}
 An invertible \quadco $\mathcal{O}$-operator of a $\mathcal{P}$-algebra $(A,\circ_{A})$ is automatically strong.
\end{pro}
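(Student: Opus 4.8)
The plan is to exploit the invertibility of $T$ to transport the $\mathcal{P}$-algebra structure of $(A,\circ_{A})$ to $V$ and then to recognize the transported multiplication as the one in~\eqref{eq:circ}.

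First I would define $u\circ_{V}v:=T^{-1}\big((Tu)\circ_{A}(Tv)\big)$ for all $u,v\in V$. Since $T$ is a linear isomorphism and the operad $\mathcal{P}$ is defined by multilinear identities in one binary operation, the map $T$ becomes an isomorphism of $\mathcal{P}$-algebras from $(V,\circ_{V})$ to $(A,\circ_{A})$; in particular $(V,\circ_{V})$ is a $\mathcal{P}$-algebra. Next, applying the defining equation~\eqref{eq:dual generic O-ope} of a \quadco $\mathcal{O}$-operator gives, for all $u,v\in V$,
\begin{eqnarray*}
u\circ_{V}v=T^{-1}\big((Tu)\circ_{A}(Tv)\big)
=T^{-1}T\big(\alpha(Tu)v+\beta(Tv)u\big)
=\alpha(Tu)v+\beta(Tv)u,
\end{eqnarray*}
which is precisely the multiplication appearing in~\eqref{eq:circ}. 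Hence that multiplication endows $V$ with a $\mathcal{P}$-algebra structure, so $T$ is strong by definition.

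There is essentially no obstacle in this argument: the only point is the observation that an invertible $T$ allows the $\mathcal{P}$-algebra structure on $A$ to be pulled back verbatim to $V$, so no identity needs to be checked by hand. This should be contrasted with Proposition~\ref{pro:notation}, where $T$ is not assumed invertible and one must instead verify each quadratic identity of $\mathcal{P}$ on $(V,\circ_{V})$ directly, expanding it through~\eqref{eq:oop} and the representation relations; here invertibility bypasses that computation entirely.
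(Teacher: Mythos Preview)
Your proof is correct and follows essentially the same approach as the paper: transport the $\mathcal{P}$-algebra structure along the isomorphism $T$ and use \eqref{eq:dual generic O-ope} to identify the transported product with \eqref{eq:circ}. The paper's version is simply the one-line computation $T^{-1}(Tu\circ_{A}Tv)=\alpha(Tu)v+\beta(Tv)u=u\circ_{V}v$, without the additional commentary.
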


\begin{proof}
        Let $T$ be a
        \quadco $\mathcal{O}$-operator of $(A,\circ_{A})$ associated to $(\alpha,\beta,V)$. Then with the notations in the proof of Proposition \ref{pro:1-}, we have
        \begin{eqnarray*}
            T^{-1}(Tu\circ_{A}Tv)
            \overset{\eqref{eq:dual generic O-ope}}{=} \alpha(Tu)v+\beta(Tv)u
            =u\circ_{V}v,\;\forall u,v\in V.
        \end{eqnarray*}
        Thus $(V,\circ_{V})$ is a $\mathcal{P}$-algebra which is isomorphic to $(A,\circ_{A})$. Hence $T$ is strong.
\end{proof}

\begin{thm}\label{thm:2-}
Let $(A,\circ_{A})$ be a $\mathcal{P}$-algebra. Then there is a compatible \quadco pre-$\mathcal{P}$-algebra structure $(A,\succ_{A},\prec_{A})$ on $(A,\circ_{A})$
if and only if there is an invertible
\quadco $\mathcal{O}$-operator $T$ of $(A,\circ_{A})$ associated to $(\alpha ,\beta ,V )$ for some linear maps $\alpha ,\beta:A\rightarrow\mathrm{End}_{\mathbb K}(V)$. In this case, the multiplications $\succ_{A}$ and $\prec_{A}$ are defined by
\begin{equation}\label{eq:thm2}
x\succ_{A}y=T\big(\alpha(x)T^{-1}(y)\big),
\;x\prec_{A}y=T\big(\beta(y)T^{-1}(x)\big),\;\forall x,y\in A.
\end{equation}
\end{thm}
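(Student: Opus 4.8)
The plan is to prove the two directions separately; the forward direction is essentially a tautology once we take $V=A$ and $T=\mathrm{id}_A$, while the backward direction is obtained by combining Propositions~\ref{pro:1-} and \ref{pro:strong1-} with a transport-of-structure argument along the isomorphism $T$.

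\emph{Necessity.} Suppose $(A,\succ_{A},\prec_{A})$ is a compatible \quadco pre-$\mathcal{P}$-algebra on $(A,\circ_{A})$. I would set $V=A$, $\alpha=L_{\succ_{A}}$, $\beta=R_{\prec_{A}}$ and $T=\mathrm{id}_A$. By the definition of a \quadco pre-$\mathcal{P}$-algebra, $\big((L_{\succ_{A}},R_{\prec_{A}})M,A\big)$ is a representation of $(A,\circ_{A})$, so the standing hypothesis of Definition~\ref{defi:O-ope} is met; and the $\mathcal{O}$-operator equation \eqref{eq:dual generic O-ope} for $T=\mathrm{id}_A$ reads $u\circ_{A}v=L_{\succ_{A}}(u)v+R_{\prec_{A}}(v)u=u\succ_{A}v+u\prec_{A}v$, which is exactly the $\mathcal{P}$-admissibility of $(A,\succ_{A},\prec_{A})$. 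Thus $\mathrm{id}_A$ is an invertible \quadco $\mathcal{O}$-operator associated to $(L_{\succ_{A}},R_{\prec_{A}},A)$, and formula \eqref{eq:thm2} then collapses to the identities $x\succ_{A}y=L_{\succ_A}(x)y$ and $x\prec_{A}y=R_{\prec_A}(y)x$, i.e.\ it recovers the given multiplications.

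\emph{Sufficiency.} Conversely, let $T:V\rightarrow A$ be an invertible \quadco $\mathcal{O}$-operator associated to $(\alpha,\beta,V)$. By Proposition~\ref{pro:strong1-}, $T$ is strong, so by \eqref{eq:circ} the space $V$ carries the $\mathcal{P}$-algebra structure $u\circ_{V}v=\alpha(Tu)v+\beta(Tv)u$, and by Proposition~\ref{pro:1-} the pair $(V,\succ_{V},\prec_{V})$ defined by \eqref{eq:P-alg1} is a \quadco pre-$\mathcal{P}$-algebra whose sub-adjacent $\mathcal{P}$-algebra is $(V,\circ_{V})$. I would then transport this along the linear isomorphism $T$: define $x\succ_{A}y:=T\big((T^{-1}x)\succ_{V}(T^{-1}y)\big)$ and $x\prec_{A}y:=T\big((T^{-1}x)\prec_{V}(T^{-1}y)\big)$, which unwind via \eqref{eq:P-alg1} to exactly the formulas \eqref{eq:thm2}. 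Using \eqref{eq:dual generic O-ope} one gets $x\succ_{A}y+x\prec_{A}y=T\big((T^{-1}x)\circ_{V}(T^{-1}y)\big)=T\big(\alpha(x)T^{-1}y+\beta(y)T^{-1}x\big)=x\circ_{A}y$, so $(A,\succ_{A},\prec_{A})$ is $\mathcal{P}$-admissible with sub-adjacent algebra $(A,\circ_{A})$. It remains to see that the type-$M$ representation condition is inherited: a direct check from the definitions gives the intertwining identities $T\,L_{\succ_{V}}(u)=L_{\succ_{A}}(Tu)\,T$ and $T\,R_{\prec_{V}}(u)=R_{\prec_{A}}(Tu)\,T$ for all $u\in V$, hence $T$ intertwines $(L_{\succ_{V}},R_{\prec_{V}})M$ with $(L_{\succ_{A}},R_{\prec_{A}})M$; since $T:(V,\circ_{V})\rightarrow(A,\circ_{A})$ is an algebra isomorphism and $\big((L_{\succ_{V}},R_{\prec_{V}})M,V\big)$ is a representation of $(V,\circ_{V})$, pushing it forward along $T$ (in the sense of Definition~\ref{defi:971}) shows $\big((L_{\succ_{A}},R_{\prec_{A}})M,A\big)$ is a representation of $(A,\circ_{A})$. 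Therefore $(A,\succ_{A},\prec_{A})$ is a compatible \quadco pre-$\mathcal{P}$-algebra on $(A,\circ_{A})$.

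\emph{Main obstacle.} The only point requiring care is this last transport-of-structure step: one must verify that the ``type-$M$'' decoration—namely, that a \emph{specific linear combination} of the $L$- and $R$-type operators forms a genuine representation—is isomorphism-invariant. This is why I would make the intertwining identities $T\,L_{\succ_{V}}(u)=L_{\succ_{A}}(Tu)\,T$ explicit and deduce the representation property from them, rather than invoking a vague ``transport'' slogan. Everything else is bookkeeping resting on Propositions~\ref{pro:1-} and \ref{pro:strong1-}.
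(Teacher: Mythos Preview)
Your proof is correct and follows essentially the same approach as the paper: the converse direction takes $T=\mathrm{id}_A$ with $(\alpha,\beta)=(L_{\succ_A},R_{\prec_A})$, and the forward direction invokes Propositions~\ref{pro:1-} and~\ref{pro:strong1-} to build the \quadco pre-$\mathcal{P}$-algebra on $V$ and then transports it along $T$. The paper handles the transport step more tersely---it simply asserts that the linear isomorphism $T$ carries the \quadco pre-$\mathcal{P}$-algebra structure from $V$ to $A$---whereas you spell out the intertwining identities $T\,L_{\succ_V}(u)=L_{\succ_A}(Tu)\,T$ and $T\,R_{\prec_V}(u)=R_{\prec_A}(Tu)\,T$ explicitly; this extra care is welcome but not a genuinely different route.
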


\begin{proof}
    Suppose that $T:V\rightarrow A$ is an invertible
    \quadco $\mathcal{O}$-operator of $(A,\circ_{A})$ associated to $(\alpha,\beta,V)$.
    Then there is an induced
    \quadco pre-$\mathcal{P}$-algebra structure $(V,\succ_{V},\prec_{V})$ on $V$ given by \eqref{eq:P-alg1}. The linear isomorphism $T$ gives a
    \quadco pre-$\mathcal{P}$-algebra structure $(A,\succ_{T},\prec_{T})$ on $A$ by
    \begin{eqnarray*} &&x\succ_{T}y:=T\big(T^{-1}(x)\succ_{V}T^{-1}(y)\big)\overset{\eqref{eq:P-alg1}}{=}T\big(\alpha(x)T^{-1}(y)\big):=x\succ_{A}y,\\ &&x\prec_{T}y:=T\big(T^{-1}(x)\prec_{V}T^{-1}(y)\big)\overset{\eqref{eq:P-alg1}}{=}T\big(\beta(y)T^{-1}(x)\big):=x\prec_{A}y,\;\forall x,y\in A.
    \end{eqnarray*}
    Moreover, obviously $x\succ_{A}y+x\prec_{A}y=x\circ_{A}y$ for all $x,y\in A$.
    Hence $(A,\succ_{A},\prec_{A})$ is a compatible
    \quadco pre-$\mathcal{P}$-algebra structure on $(A,\circ_{A})$.

    Conversely, suppose that $(A,\succ_{A},\prec_{A})$ is a
    \quadco pre-$\mathcal{P}$-algebra. Then $T=\mathrm{id}\in\mathrm{Hom}_{\mathbb K}(A,A)=\mathrm{End}_{\mathbb K}(A)$ is an invertible
    \quadco $\mathcal{O}$-operator of $(A,\circ_{A})$ associated to $( L _{\succ_{A}},$
$ R _{\prec_{A}},A)$.
\end{proof}

\subsection{Dual \quadco pre-$\mathcal{P}$-algebras
and dual \quadco $\mathcal{O}$-operators}\

In the following, we shall introduce the notions of dual \quadco pre-$\calp$-algebras, dual \quadco $\calo$-operators and study their relation. Many results on dual \quadco pre-$\calp$-algebras and dual \quadco $\calo$-operators are similar to the  cases in Section \ref{subsec:2.1}, and hence we will omit the detail.

\begin{defi}
Let $(A,\succ_{A},\prec_{A})$ be a $\mathcal{P}$-admissible
algebra such that $(A,\circ_{A}
)$ is the
sub-adjacent $\calp$-algebra. If there exists $M\in
M_2(\mathbb{K})$ such that \dualrepequ is a representation of
$(A,\circ_{A})$, then we say $(A,\succ_{A},\prec_{A})$ is a {\bf
dual \quadco pre-$\calp$-algebra}. Moreover, $(A,\succ_{A},\prec_{A})$ is called {\bf compatible} on $(A,\circ_A)$ and $(A,\circ_A)$ is called the {\bf sub-adjacent $\calp$-algebra of $(A,\succ_{A},\prec_{A})$}.
\end{defi}

\begin{defi}\label{defi:dual type O-ope}
Let $(A,\circ_{A})$ be a $\mathcal{P}$-algebra. Suppose that $\alpha ,\beta :A\rightarrow\mathrm{End}_{\mathbb K}(V)$ are linear maps such that $\big((\alpha^*,\beta^*)M,V^*\big)= (a_{1}\alpha^*+a_{2}\beta^*,b_{1}\alpha^*+b_{2}\beta^*,V^*)$ is a representation of $(A,\circ_{A})$.
If there is a linear map $T:V\rightarrow A$ satisfying \eqref{eq:dual generic O-ope},
then we say that $T$ is a dual
{\bf \quadco $\mathcal{O}$-operator of $(A,\circ_{A})$ associated to $(\alpha ,\beta ,V )$}.
In particular, a dual
 \quadco $\mathcal{O}$-operator is called {\bf strong} if there exists a $\mathcal{P}$-algebra structure on $V$ given by \eqref{eq:circ}.
\end{defi}

\begin{pro}\label{pro:1}
Let $(A,\circ_{A})$ be a $\mathcal{P}$-algebra. Suppose that
$T:V\rightarrow A$ is a dual
\quadco $\mathcal{O}$-operator of $(A,\circ_{A})$ associated to $(\alpha ,\beta ,V )$. Define the multiplications $\succ_{V}$,
$\prec_{V}:V\otimes V\rightarrow V$ by \eqref{eq:P-alg1}.
Then $(V,\succ_{V},\prec_{V})$ is a dual
\quadco pre-$\mathcal{P}$-algebra if and only if $T$ is strong. In this case, $T:V\rightarrow A$ is a $\calp$-algebra homomorphism.
\end{pro}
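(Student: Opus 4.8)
The plan is to repeat, in dualized form, the argument of Proposition~\ref{pro:1-}. The first step is bookkeeping. For $\succ_V,\prec_V$ defined by \eqref{eq:P-alg1} one has $L_{\succ_V}(u)=\alpha(Tu)$ and $R_{\prec_V}(u)=\beta(Tu)$, so $L_{\succ_V}=\alpha\circ T$ and $R_{\prec_V}=\beta\circ T$; applying \eqref{eq:dual} and using that $(-)^*$ commutes with linear combinations gives $L^*_{\succ_V}=\alpha^*\circ T$ and $R^*_{\prec_V}=\beta^*\circ T$, hence, writing $\rho:=a_1\alpha+a_2\beta$ and $\mu:=b_1\alpha+b_2\beta$,
\[
\big((L^*_{\succ_V},R^*_{\prec_V})M,V^*\big)=(\rho^*\circ T,\ \mu^*\circ T,\ V^*).
\]
Meanwhile the sub-adjacent operation $\circ_V=\succ_V+\prec_V$ of $(V,\succ_V,\prec_V)$ is exactly the one in \eqref{eq:circ}, and the hypothesis that $T$ is a dual \quadco $\calo$-operator associated to $(\alpha,\beta,V)$ says precisely that $(\rho^*,\mu^*,V^*)$ is a representation of $(A,\circ_A)$.

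The ``only if'' direction is then immediate: if $(V,\succ_V,\prec_V)$ is a dual \quadco pre-$\calp$-algebra, it is in particular $\calp$-admissible, so $(V,\circ_V)$ is a $\calp$-algebra; since $\circ_V$ is the operation of \eqref{eq:circ}, $T$ is strong. For the ``if'' direction, assume $T$ strong, so $(V,\circ_V)$ is a $\calp$-algebra. First I would note that combining \eqref{eq:dual generic O-ope} with \eqref{eq:circ} yields, for all $u,v\in V$,
\[
T(u\circ_V v)=T\big(\alpha(Tu)v+\beta(Tv)u\big)=(Tu)\circ_A(Tv),
\]
so $T\colon(V,\circ_V)\to(A,\circ_A)$ is a $\calp$-algebra homomorphism; this settles the final assertion of the statement. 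It remains to show that $(\rho^*\circ T,\mu^*\circ T,V^*)$ is a representation of $(V,\circ_V)$.

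For that I would imitate the direct-sum computation in the proof of Proposition~\ref{pro:1-}: put on $V\oplus V^*$ the multiplication $(u+p^*)\circ_d(v+q^*):=u\circ_V v+(\rho^*\circ T)(u)q^*+(\mu^*\circ T)(v)p^*$ (so that $V^*\cdot V^*=0$), plug a defining quadratic relation \eqref{eq:877} of $\calp$ into triples of $V\oplus V^*$, and organize the expansion into four ``columns''. The pure-$V$ column vanishes because $(V,\circ_V)$ is a $\calp$-algebra; each of the three $V^*$-columns vanishes after applying the representation identities \eqref{rep1-}--\eqref{rep3-} for the representation $(\rho^*,\mu^*,V^*)$ of $(A,\circ_A)$ evaluated at $Tu,Tv,Tw$, where terms of the form $\rho^*\big(T(u\circ_V v)\big)q^*$ are first rewritten as $\rho^*\big((Tu)\circ_A(Tv)\big)q^*$ via the homomorphism property above. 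Thus $(V\oplus V^*,\circ_d)$ is a $\calp$-algebra, which by Definition~\ref{defi:971} is equivalent to $(\rho^*\circ T,\mu^*\circ T,V^*)=\big((L^*_{\succ_V},R^*_{\prec_V})M,V^*\big)$ being a representation of $(V,\circ_V)$; together with $\calp$-admissibility this is exactly the assertion that $(V,\succ_V,\prec_V)$ is a dual \quadco pre-$\calp$-algebra. A more conceptual alternative, once $T$ is known to be a homomorphism, is to invoke the general principle that a representation pulls back along an algebra homomorphism to a representation and apply it to $(\rho^*,\mu^*,V^*)$ and $T$; if that principle is not taken for granted, it is itself proved by the same column bookkeeping.

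I do not anticipate a real obstacle. The only things to watch are the sign in \eqref{eq:dual} and the compatibility of $(-)^*$ with linear combinations and with precomposition by $T$ in the bookkeeping step, and the observation that, since all terms quadratic in $V^*$ vanish in $\circ_d$, the expansion of \eqref{eq:877} on $V\oplus V^*$ is formally the same computation already carried out in Proposition~\ref{pro:1-}, with $V$ replaced by $V^*$ and $\alpha,\beta$ by $\alpha^*,\beta^*$; this is precisely why the detail can be omitted.
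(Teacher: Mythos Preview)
Your proposal is correct and is precisely the dualized version of the argument of Proposition~\ref{pro:1-} that the paper intends when it writes ``It is similar to the proof of Propositions~\ref{pro:1-}.'' Your bookkeeping $L^*_{\succ_V}=\alpha^*\circ T$, $R^*_{\prec_V}=\beta^*\circ T$ and the resulting semidirect-product computation on $V\oplus V^*$ are exactly what that similarity entails, and the homomorphism claim is handled correctly from \eqref{eq:dual generic O-ope} and \eqref{eq:circ}.
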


\begin{proof}
It is similar to the proof of Propositions \ref{pro:1-}.
\end{proof}

\begin{pro}\label{pro:strong1}
 An invertible dual \quadco $\mathcal{O}$-operator of a $\mathcal{P}$-algebra $(A,\circ_{A})$ is automatically strong.
\end{pro}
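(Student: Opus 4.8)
The plan is to mimic the proof of Proposition \ref{pro:strong1-} essentially verbatim, since the notion of a \emph{strong} dual \quadco $\calo$-operator is defined via exactly the same formula \eqref{eq:circ} as in the non-dual case. First I would recall what needs to be shown: given an invertible dual \quadco $\calo$-operator $T:V\rightarrow A$ of $(A,\circ_{A})$ associated to $(\alpha,\beta,V)$, one must exhibit a $\calp$-algebra structure on $V$ whose multiplication is $u\circ_{V}v=\alpha(Tu)v+\beta(Tv)u$.

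The key step is to rewrite the defining equation \eqref{eq:dual generic O-ope}. Since $T$ is invertible, applying $T^{-1}$ to both sides of $(Tu)\circ_{A}(Tv)=T\big(\alpha(Tu)v+\beta(Tv)u\big)$ yields
$$T^{-1}\big((Tu)\circ_{A}(Tv)\big)=\alpha(Tu)v+\beta(Tv)u=u\circ_{V}v,\quad\forall u,v\in V.$$
Equivalently, $T(u\circ_{V}v)=(Tu)\circ_{A}(Tv)$, so $T$ is a linear isomorphism that intertwines $\circ_{V}$ and $\circ_{A}$. Then I would invoke the fact that any quadratic identity defining $\calp$ is a multilinear relation in the multiplication, hence is pulled back along the linear isomorphism $T^{-1}$ from $(A,\circ_{A})$ to $(V,\circ_{V})$; thus $(V,\circ_{V})$ is a $\calp$-algebra isomorphic to $(A,\circ_{A})$ via $T$. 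By Definition \ref{defi:dual type O-ope} this means exactly that $T$ is strong, and as a byproduct $T$ is a $\calp$-algebra homomorphism.

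There is essentially no obstacle here: the argument uses only the invertibility of $T$ and the defining equation \eqref{eq:dual generic O-ope}, and not the hypothesis that $\big((\alpha^{*},\beta^{*})M,V^{*}\big)$ is a representation of $(A,\circ_{A})$ — this is the same phenomenon as in Proposition \ref{pro:strong1-}. The only point requiring a moment's care is to note that the definition of ``strong'' is the \emph{same} equation \eqref{eq:circ} in both the dual and non-dual settings, so no modification of the isomorphism argument is needed; in particular one does not need to redo the lengthy computation of Proposition \ref{pro:1-}, since the conclusion follows directly from transporting the $\calp$-structure along $T$.
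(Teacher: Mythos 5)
Your proposal is correct and is essentially the paper's own argument: the paper proves this by declaring it ``similar to the proof of Proposition \ref{pro:strong1-}'', which is precisely the transport-of-structure computation $T^{-1}(Tu\circ_{A}Tv)=\alpha(Tu)v+\beta(Tv)u=u\circ_{V}v$ showing $(V,\circ_{V})$ is a $\calp$-algebra isomorphic to $(A,\circ_{A})$ via $T$. Your additional observation that the representation hypothesis on $\big((\alpha^{*},\beta^{*})M,V^{*}\big)$ plays no role here is accurate and consistent with the paper's treatment.
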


\begin{proof}
It is similar to the proof of Proposition \ref{pro:strong1-}.
\end{proof}

\begin{thm}\label{thm:2}
Let $(A,\circ_{A})$ be a $\mathcal{P}$-algebra. Then there is a
compatible dual \quadco pre-$\mathcal{P}$-algebra structure
$(A,\succ_{A},\prec_{A})$ on $(A,\circ_{A})$ if and only if there
is an invertible dual \quadco $\mathcal{O}$-operator $T$ of
$(A,\circ_{A})$ associated to $(\alpha ,\beta ,V )$ for some
linear maps $\alpha,\beta:A\rightarrow\mathrm{End}_{\mathbb
K}(V)$. In this case, the multiplications $\succ_{A},\prec_{A}$
are defined by \eqref{eq:thm2}.
\end{thm}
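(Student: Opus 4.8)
The plan is to mirror exactly the argument already used for Theorem \ref{thm:2-}, simply substituting the dual-type versions of the supporting results. The logical skeleton is: an invertible dual type-$M$ $\mathcal{O}$-operator produces a dual type-$M$ pre-$\mathcal{P}$-algebra on $V$ (via Propositions \ref{pro:strong1} and \ref{pro:1}), then transport this structure to $A$ along the isomorphism $T$; conversely, the identity map is always a dual type-$M$ $\mathcal{O}$-operator for $(L_{\succ_A}, R_{\prec_A}, A)$ whenever $(A,\succ_A,\prec_A)$ is a dual type-$M$ pre-$\mathcal{P}$-algebra.

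First I would treat the forward direction. Suppose $T:V\rightarrow A$ is an invertible dual type-$M$ $\mathcal{O}$-operator associated to $(\alpha,\beta,V)$. By Proposition \ref{pro:strong1}, $T$ is strong, so by Proposition \ref{pro:1} the multiplications $\succ_V,\prec_V$ on $V$ defined by \eqref{eq:P-alg1} make $(V,\succ_V,\prec_V)$ a dual type-$M$ pre-$\mathcal{P}$-algebra, with sub-adjacent $\mathcal{P}$-algebra $(V,\circ_V)$ given by \eqref{eq:circ}. Now define $\succ_A,\prec_A$ on $A$ by pushing forward along $T$:
\begin{eqnarray*}
x\succ_A y&:=&T\big(T^{-1}(x)\succ_V T^{-1}(y)\big)\overset{\eqref{eq:P-alg1}}{=}T\big(\alpha(x)T^{-1}(y)\big),\\
x\prec_A y&:=&T\big(T^{-1}(x)\prec_V T^{-1}(y)\big)\overset{\eqref{eq:P-alg1}}{=}T\big(\beta(y)T^{-1}(x)\big),\;\forall x,y\in A.
\end{eqnarray*}
Since $T$ is a linear isomorphism, $(A,\succ_A,\prec_A)$ is isomorphic as a triple of multiplications to $(V,\succ_V,\prec_V)$; in particular $x\circ_A y=x\succ_A y+x\prec_A y$ recovers the original product on $A$ (because $T$ intertwines $\circ_V$ with $\circ_A$ by strongness), so $(A,\succ_A,\prec_A)$ is $\mathcal{P}$-admissible with sub-adjacent algebra $(A,\circ_A)$. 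It remains to check that $\big((L^*_{\succ_A},R^*_{\prec_A})M,A^*\big)$ is a representation of $(A,\circ_A)$; this follows because the isomorphism $T$ carries the representation $\big((L^*_{\succ_V},R^*_{\prec_V})M,V^*\big)$ of $(V,\circ_V)$ — which exists since $(V,\succ_V,\prec_V)$ is a dual type-$M$ pre-$\mathcal{P}$-algebra — to the corresponding data on $A$, using that $(T^{-1})^*$ gives the dual intertwiner on $A^*\cong V^*$ in the sense of \eqref{eq:eq Leibniz rep}. Hence $(A,\succ_A,\prec_A)$ is a compatible dual type-$M$ pre-$\mathcal{P}$-algebra, with multiplications as in \eqref{eq:thm2}.

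For the converse, if $(A,\succ_A,\prec_A)$ is a dual type-$M$ pre-$\mathcal{P}$-algebra, take $T=\mathrm{id}\in\mathrm{End}_{\mathbb K}(A)$, $V=A$, $\alpha=L_{\succ_A}$, $\beta=R_{\prec_A}$. Then \eqref{eq:dual generic O-ope} reads $x\circ_A y=x\succ_A y+x\prec_A y$, which holds by $\mathcal{P}$-admissibility, and by definition of a dual type-$M$ pre-$\mathcal{P}$-algebra the triple $\big((L^*_{\succ_A},R^*_{\prec_A})M,A^*\big)$ is a representation of $(A,\circ_A)$; so $\mathrm{id}$ is an invertible dual type-$M$ $\mathcal{O}$-operator, and plugging $T=\mathrm{id}$ into \eqref{eq:thm2} returns $\succ_A,\prec_A$. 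I expect the only delicate point to be the bookkeeping of how the \emph{dual} representation $\big((\alpha^*,\beta^*)M,V^*\big)$ transports along $T$ — one must verify that the pushforward of a dual type-$M$ pre-structure is again dual type-$M$ (not merely type-$M$), which comes down to compatibility of the duality $(-)^*$ in \eqref{eq:dual} with the isomorphism $T$; this is routine but should be stated, and it is the place where the argument genuinely differs from the non-dual Theorem \ref{thm:2-}. As the paper notes, the remaining verifications parallel Section \ref{subsec:2.1} verbatim, so I would simply cite Propositions \ref{pro:1} and \ref{pro:strong1} and the proof of Theorem \ref{thm:2-} rather than repeating the operad-relation computation \eqref{eq:877}--\eqref{rep3-}.
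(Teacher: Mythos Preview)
Your proposal is correct and follows exactly the approach the paper takes: the paper's own proof of Theorem~\ref{thm:2} simply says ``It is similar to the proof of Theorem~\ref{thm:2-},'' and your write-up is precisely that similar argument spelled out, invoking Propositions~\ref{pro:1} and~\ref{pro:strong1} in place of Propositions~\ref{pro:1-} and~\ref{pro:strong1-}, and otherwise repeating the transport-along-$T$ and $T=\mathrm{id}$ steps verbatim. The ``delicate point'' you flag about transporting the dual representation is indeed routine (an isomorphism of triples $(A,\succ,\prec)$ carries the intrinsically defined dual type-$M$ property with it), so nothing further is needed.
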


\begin{proof}
It is similar to the proof of Theorem \ref{thm:2-}.
\end{proof}

Let $W$ and $V$ be vector spaces, and $\beta:W\rightarrow V$ be
linear. Denote the linear dual
$\beta^{*}:V^{*}\rightarrow W^{*}$ by
\begin{equation*}
\langle\beta^{*}(u^{*}),v\rangle=\langle u^{*},\beta(v)\rangle,\;\forall u^{*}\in V^{*},v\in W.
\end{equation*}
The isomorphism ${\rm Hom}_{\mathbb K}(V\otimes V,\mathbb K)\cong
{\rm Hom}_{\mathbb K}(V, V^*)$ identifies a bilinear form
$\mathcal{B}:V\otimes V\rightarrow \mathbb K$ on $V$ as a
linear map  $\mathcal B^\natural :V\rightarrow V^*$ by
$$\mathcal B(u,v)=\langle \mathcal B^\natural (u),
v\rangle ,\;\;\forall u,v\in V.$$
Then $\mathcal B$ is nondegenerate if and only if $\mathcal B^\natural$ is invertible.

\begin{thm}\label{thm:3}
Let $(A,\circ_{A})$ be a $\mathcal{P}$-algebra and $M\in
M_2(\mathbb{K})$ be nonsingular. 
Let $\mathcal{B}$ be a
nondegenerate bilinear form on  $(A,\circ_{A})$ such that the
type-$M$ invariant condition \eqref{eq:M-inv} holds, that is,
\begin{eqnarray}\label{eq:1299}
|M|\mathcal{B}(x\circ_{A}y,z)
=\mathcal{B}(x,b_{1}y\circ_{A}z-a_{1}z\circ_{A}y)
+\mathcal{B}(y,a_{2}z\circ_{A}x-b_{2}x\circ_{A}z), \quad \forall x, y, z\in A.
\end{eqnarray}
Then there is a compatible dual
\quadco pre-$\mathcal{P}$-algebra structure $(A,\succ_{A},$
$\prec_{A})$ with multiplications $\succ_{A},\prec_{A}:A\otimes A\rightarrow A$ respectively defined by
\begin{eqnarray}
&&|M|\mathcal{B}(x\succ_{A}y,z)
=\mathcal{B}(y,a_{2}z\circ_{A}x-b_{2}x\circ_{A}z),\label{eq:16}\\
&&|M|\mathcal{B}(x\prec_{A}y,z)
=\mathcal{B}(x,b_{1}y\circ_{A}z-a_{1}z\circ_{A}y).\label{eq:17}
\end{eqnarray}
Moreover, $( L_{\circ_{A}}, R_{\circ_{A}},A)$ and \dualrep are equivalent as representations of $(A,\circ_{A})$.

Conversely, let $(A,\succ_{A},\prec_{A})$ be a  compatible dual
\quadco pre-$\mathcal{P}$-algebra structure on a $\calp$-algebra
$(A,\circ_{A})$ and $|M|\neq 0$. If  $( L_{\circ_{A}},
R_{\circ_{A}},A)$ and \dualrep are equivalent as representations
of $(A,\circ_{A})$, then there exists a nondegenerate bilinear
form $\mathcal{B}$ on $A$ satisfying
\eqref{eq:1299}.
\end{thm}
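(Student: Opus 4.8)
The plan is to read the statement through the correspondence between invertible dual type-$M$ $\mathcal{O}$-operators and compatible dual type-$M$ pre-$\mathcal{P}$-algebras from Theorem~\ref{thm:2}. For the forward direction, set $T:=(\mathcal{B}^\natural)^{-1}\colon A^{*}\to A$, which is invertible since $\mathcal{B}$ is nondegenerate, and define $\alpha,\beta\colon A\to\mathrm{End}_{\mathbb K}(A^{*})$ by requiring $(\alpha^{*},\beta^{*})=(L_{\circ_{A}},R_{\circ_{A}})M^{-1}$, that is,
\[
\alpha(x)=\tfrac{1}{|M|}\bigl(b_{2}L_{\circ_{A}}^{*}(x)-a_{2}R_{\circ_{A}}^{*}(x)\bigr),\qquad
\beta(x)=\tfrac{1}{|M|}\bigl(a_{1}R_{\circ_{A}}^{*}(x)-b_{1}L_{\circ_{A}}^{*}(x)\bigr).
\]
Two computations do the work. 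First, by construction $(\alpha^{*},\beta^{*})M=(L_{\circ_{A}},R_{\circ_{A}})$ is the adjoint representation of $(A,\circ_{A})$, so the representation clause in the definition of a dual type-$M$ $\mathcal{O}$-operator holds automatically. Second, applying $T^{-1}=\mathcal{B}^\natural$ to~\eqref{eq:dual generic O-ope}, pairing against an arbitrary $z\in A$, and expanding $L_{\circ_{A}}^{*},R_{\circ_{A}}^{*}$ via~\eqref{eq:dual}, turns the $\mathcal{O}$-operator equation for $T$ associated to $(\alpha,\beta,A^{*})$ into exactly the type-$M$ invariant condition~\eqref{eq:1299}. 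Hence $T$ is an invertible dual type-$M$ $\mathcal{O}$-operator, and Theorem~\ref{thm:2} produces a compatible dual type-$M$ pre-$\mathcal{P}$-algebra on $A$ with $x\succ_{A}y=T(\alpha(x)T^{-1}y)$, $x\prec_{A}y=T(\beta(y)T^{-1}x)$; unwinding these with $\mathcal{B}^\natural T=\mathrm{id}_{A^{*}}$ and $T^{-1}=\mathcal{B}^\natural$ gives precisely~\eqref{eq:16} and~\eqref{eq:17}.

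For the asserted equivalence of representations I would use the isomorphism $\phi\colon A\to A^{*}$ determined by $\langle\phi(w),z\rangle=\mathcal{B}(z,w)$, which is invertible by nondegeneracy and coincides with $\mathcal{B}^\natural$ when $\mathcal{B}$ is symmetric. Using~\eqref{eq:16},~\eqref{eq:17},~\eqref{eq:dual} and the scalar identity $a_{1}b_{2}-a_{2}b_{1}=|M|$, one checks $\phi L_{\circ_{A}}(x)=(a_{1}L_{\succ_{A}}^{*}+a_{2}R_{\prec_{A}}^{*})(x)\phi$ and $\phi R_{\circ_{A}}(x)=(b_{1}L_{\succ_{A}}^{*}+b_{2}R_{\prec_{A}}^{*})(x)\phi$: the cross-terms cancel and the determinant absorbs the factor $|M|$, so $\phi$ is an equivalence between $(L_{\circ_{A}},R_{\circ_{A}},A)$ and $\bigl((L_{\succ_{A}}^{*},R_{\prec_{A}}^{*})M,A^{*}\bigr)$.

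For the converse, let $\phi\colon A\to A^{*}$ be an equivalence of those two representations and set $\mathcal{B}(z,w):=\langle\phi(w),z\rangle$, which is a nondegenerate bilinear form since $\phi$ is an isomorphism. Pairing each of the two intertwining identities for $\phi$ against a third vector and using~\eqref{eq:dual} produces the linear system
\[
\mathcal{B}(z,x\circ_{A}y)=-a_{1}\mathcal{B}(x\succ_{A}z,y)-a_{2}\mathcal{B}(z\prec_{A}x,y),\qquad
\mathcal{B}(z,y\circ_{A}x)=-b_{1}\mathcal{B}(x\succ_{A}z,y)-b_{2}\mathcal{B}(z\prec_{A}x,y)
\]
in the unknowns $\mathcal{B}(x\succ_{A}z,y)$ and $\mathcal{B}(z\prec_{A}x,y)$, whose coefficient matrix is the transpose of $M$ and hence invertible because $|M|\neq 0$. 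Solving and relabeling reproduces~\eqref{eq:16} and~\eqref{eq:17}, and adding them after substituting $x\circ_{A}y=x\succ_{A}y+x\prec_{A}y$ from $\mathcal{P}$-admissibility yields~\eqref{eq:1299}.

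The calculations with~\eqref{eq:dual}, the identification $A^{**}\cong A$, and the sum $x\succ_{A}y+x\prec_{A}y=x\circ_{A}y$ are routine; the step needing care is the precise choice of intertwiner, because $\mathcal{B}$ is not assumed symmetric here — $\mathcal{B}^\natural$ itself intertwines $\bigl((L_{\succ_{A}},R_{\prec_{A}})M,A\bigr)$ with $(L_{\circ_{A}}^{*},R_{\circ_{A}}^{*},A^{*})$, and one must transpose the pairing to reach the representations in the statement. The one genuine input is that the single identity $a_{1}b_{2}-a_{2}b_{1}=|M|$ simultaneously collapses the $\mathcal{O}$-operator equation to~\eqref{eq:1299} and $(\alpha^{*},\beta^{*})M$ to the adjoint representation, so no case distinction on $M$ beyond nonsingularity is ever needed.
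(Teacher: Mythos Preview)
Your proposal is correct and follows essentially the same approach as the paper's proof: both set $T=(\mathcal{B}^\natural)^{-1}$, choose $\alpha,\beta$ so that $(\alpha^{*},\beta^{*})M$ collapses to the adjoint representation, verify that the $\mathcal{O}$-operator equation for $T$ is precisely~\eqref{eq:1299}, invoke Theorem~\ref{thm:2}, and then use the transposed map $(\mathcal{B}^\natural)^{*}$ (your $\phi$) as the intertwiner for the equivalence of representations. Your converse argument, inverting the $2\times 2$ system with coefficient matrix $M^{T}$ to recover~\eqref{eq:16}--\eqref{eq:17} and then adding them, is a slightly more explicit version of what the paper does when it says \eqref{eq:bfeuq2} $\Leftrightarrow$ \eqref{eq:bf41} $\Rightarrow$ \eqref{eq:16}--\eqref{eq:17} $\Rightarrow$ \eqref{eq:1299}.
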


\begin{proof}
Let $x,y,z\in A$ and $a^{*}=\mathcal{B}^{\natural}(x),
b^{*}=\mathcal{B}^{\natural}(y)$. Then we have {\small
\begin{eqnarray*}
     |M|\langle \mathcal{B}^{\natural}\big( \mathcal{B}^{\natural^{-1}}(a^{*})\circ_{A} \mathcal{B}^{\natural^{-1}}(b^{*}) \big),z\rangle
        & =&|M|\langle \mathcal{B}^{\natural}(x\circ_{A}y),z\rangle =|M|\mathcal{B}(x\circ_{A}y,z)\\
        & =&\mathcal{B}(x,b_{1}y\circ_{A}z-a_{1}z\circ_{A}y)
        +\mathcal{B}(y,a_{2}z\circ_{A}x-b_{2}x\circ_{A}z)\\
        & =&\langle \mathcal{B}^{\natural}(x), b_{1}y\circ_{A}z-a_{1}z\circ_{A}y\rangle
        +\langle \mathcal{B}^{\natural}(y), a_{2}z\circ_{A}x-b_{2}x\circ_{A}z\rangle\\
        & =&\langle (b_{2} L^{*}_{\circ_{A}}-a_{2} R^{*}_{\circ_{A}})
        \big(\mathcal{B}^{\natural^{-1}}(a^{*})\big)b^{*},z\rangle
        +\langle
        (a_{1} R^{*}_{\circ_{A}}-b_{1} L^{*}_{\circ_{A}})
        \big(\mathcal{B}^{\natural^{-1}}(b^{*})\big)a^{*},z\rangle.
    \end{eqnarray*}}
Hence we have
\begin{equation*}
\mathcal{B}^{\natural^{-1}}(a^{*})\circ_{A}\mathcal{B}^{\natural^{-1}}(b^{*})
=\mathcal{B}^{\natural^{-1}} \bigg(\frac{b_{2} L^*_{\circ_{A}}
    -a_{2} R^*_{\circ_{A}}}{|M|}
\big(\mathcal{B}^{\natural^{-1}}(a^{*})\big)b^{*} + \frac{a_{1}
R^*_{\circ_{A}}
    -b_{1} L^*_{\circ_{A}}}{|M|}
\big(\mathcal{B}^{\natural^{-1}}(b^{*})\big)a^{*}\bigg).
\end{equation*}
Define the triple $(\alpha,\beta,V)$ by
$$ (\alpha,\beta,V):= \Big(\frac{b_{2} L^*_{\circ_{A}}
-a_{2} R^*_{\circ_{A}}}{|M|}, \frac{a_{1} R^*_{\circ_{A}} -b_{1}
L^*_{\circ_{A}}}{|M|},A^*\Big).$$ Hence
$\big((\alpha^*,\beta^*)M,V^*\big)=\big(L_{\circ_A},R_{\circ_A}, A\big)$ is the
adjoint representation of $(A,\circ_A)$ and hence
$T=\mathcal{B}^{\natural^{-1}}$ is an invertible dual \quadco
$\mathcal{O}$-operator of $(A,\circ_{A})$ associated to the triple
$(\alpha,\beta,V)$. By Theorem \ref{thm:2}, there are
multiplications $\succ_{A}$ and $\prec_{A}$ defined by
\eqref{eq:thm2} such that $(A,\succ_{A},\prec_{A})$ is a dual
\quadco pre-$\mathcal{P}$-algebra. Explicitly, we have
\begin{equation*}
x\succ_{A}y=\mathcal{B}^{\natural^{-1}} \bigg(\frac{b_{2}
L^{*}_{\circ_{A}} -a_{2} R^{*}_{\circ_{A}}}{|M|}
(x)\mathcal{B}^{\natural}(y)\bigg),\;\;
x\prec_{A}y=\mathcal{B}^{\natural^{-1}} \bigg(\frac{a_{1}
R^{*}_{\circ_{A}} -b_{1} L^{*}_{\circ_{A}}}{|M|}
(y)\mathcal{B}^{\natural}(x)\bigg),
\end{equation*}
and thus
\begin{eqnarray*}
   &&  |M|\mathcal{B}(x\succ_{A}y,z)=\langle
    (b_{2} L^{*}_{\circ_{A}} -a_{2} R^{*}_{\circ_{A}})(x)\mathcal{B}^{\natural}(y) ,z\rangle
    =\mathcal{B}(y,a_{2}z\circ_{A}x-b_{2}x\circ_{A}z),\\
 &&    |M|\mathcal{B}(x\prec_{A}y,z)=\langle
    (a_{1} R^{*}_{\circ_{A}}   -b_{1} L^{*}_{\circ_{A}})(y)\mathcal{B}^{\natural}(x), z\rangle 
    =\mathcal{B}(x,b_{1}y\circ_{A}z-a_{1}z\circ_{A}y).
\end{eqnarray*}
Moreover, we have
\begin{eqnarray}
\mathcal{B}(a_{1}x\succ_{A}y+a_{2}y\prec_{A}x,z)=
-\mathcal{B}(y,x\circ_{A}z),\;\;
\mathcal{B}(b_{1}x\succ_{A}y+b_{2}y\prec_{A}x,z)=
-\mathcal{B}(y,z\circ_{A}x),\label{eq:bf41}
\end{eqnarray}
which equivalently gives
\begin{eqnarray}
(\mathcal{B}^{\natural})^{*}\big( L_{\circ_{A}}(x)z \big)= (a_{1}
L^{*}_{\succ_{A}} +a_{2}
R^{*}_{\prec_{A}})(x)(\mathcal{B}^{\natural})^{*}(z), \;\;
(\mathcal{B}^{\natural})^{*}\big( R_{\circ_{A}}(x)z \big)= (b_{1}
L^{*}_{\succ_{A}} +b_{2}
R^{*}_{\prec_{A}})(x)(\mathcal{B}^{\natural})^{*}(z).\label{eq:bfeuq2}
\end{eqnarray}
Therefore, the linear isomorphism
$(\mathcal{B}^{\natural})^{*}:A\rightarrow A^{*}$ gives the
equivalence between  $( L_{\circ_{A}}, R_{\circ_{A}},A)$ and
$\big( (L_{\succ_{A}}^{*}, R_{\prec_{A}}^{*})$ $M,A\big)$
 as representations of $(A,\circ_{A})$.

Conversely, let $\phi:A\rightarrow A^{*}$ be a linear isomorphism
which gives the equivalence between $( L _{\circ_{A}},
R_{\circ_{A}},$ $A)$ and \dualrep as representations of
$(A,\circ_{A})$. Then there is a nondegenerate bilinear form
$\mathcal{B}$ on $A$ defined by
\begin{equation}\mlabel{eq:phi2}
    \mathcal{B}(x,y)=\langle x,\phi(y)\rangle,\;\forall x,y\in A,
\end{equation}
that is, we have $\phi=(\mathcal{B}^{\natural})^{*}$. Hence
 \eqref{eq:bfeuq2} holds. Moreover,
 \eqref{eq:bfeuq2} holds if and only if \eqref{eq:bf41} holds and thus
\eqref{eq:16} and \eqref{eq:17} hold. Therefore we obtain
\eqref{eq:1299}. Hence the conclusion follows.
\end{proof}

\begin{rmk} Our type-$M$ pre-$\calp$-algebras can be further generalized to the case when the corresponding representation is the
linear combination of $L_\succ,R_\succ, L_\prec, R_\prec$ or
$L_\succ^*,R_\succ^*, L_\prec^*, R_\prec^*$.  Such generalizations
contain (dual) \quadco pre-$\mathcal{P}$-algebras as a subclass.
However, when it comes to the relations with analogues of
$\calo$-operators, we are usually concerned with (dual) \quadco
pre-$\mathcal{P}$-algebras only.
\end{rmk}

\section{\Sctplas and their realizations from admissible averaging Lie algebras}\label{sec:3}

In this section, we apply the general framework formulated in
Section \ref{sec:2} to type-$M$ pre-Leibniz algebras, in
particular, type-$a$ pre-Leibniz algebras, or equivalently,
dual type-$b$ pre-Leibniz algebras. Special type-$a$
pre-Leibniz algebras are constructed from admissible averaging Lie
algebras.

\subsection{Type-$M$ pre-Leibniz algebras}\

The following is an explicit form of a special case of the general notions in Definition \ref{defi:971}.

\begin{defi}\cite{ST}
    A \textbf{representation} of a Leibniz algebra $(A,\circ_{A})$ is a triple $(l_{\circ_{A}},r_{\circ_{A}},V)$ consisting of a vector space $V$ and linear maps $l_{\circ_{A}},r_{\circ_{A}}:A\rightarrow\mathrm{End}_{\mathbb
K}(V)$ satisfying the following equations:
\begin{eqnarray} &&l_{\circ_{A}}(x\circ_{A}y)v=l_{\circ_{A}}(x)l_{\circ_{A}}(y)v
-l_{\circ_{A}}(y)l_{\circ_{A}}(x)v,\label{eq:rep1}\\
&&r_{\circ_{A}}(x\circ_{A}y)v=l_{\circ_{A}}(x)r_{\circ_{A}}(y)v
-r_{\circ_{A}}(y)l_{\circ_{A}}(x)v,\label{eq:rep2}\\
&&r_{\circ_{A}}(y)l_{\circ_{A}}(x)v=-r_{\circ_{A}}(y)r_{\circ_{A}}(x)v,\;\forall x,y\in A, v\in V.\label{eq:rep3}
\end{eqnarray}
\end{defi}

Denote a matrix $L=\begin{pmatrix}
    1 & -1 \\
    0 & -1
\end{pmatrix}$. Then we have the following result.

\begin{lem}\label{lem:1518}
Let $(l_{\circ_{A}},r_{\circ_{A}},V)$ be a representation of a Leibniz algebra $(A,\circ_{A})$. 
Then $\big( (l^{*}_{\circ_{A}},r^{*}_{\circ_{A}})L,V^{*} \big)$
$=(l^{*}_{\circ_{A}}, -l^{*}_{\circ_{A}}-r^{*}_{\circ_{A}}, V^{*})$ is also a representation of $(A,\circ_{A})$. In particular, $\big( ( L^{*}_{\circ_{A}}, R^{*}_{\circ_{A}})L,V^{*} \big)=( L^{*}_{\circ_{A}}, - L^{*}_{\circ_{A}}- R^{*}_{\circ_{A}}, A^{*})$ is a representation of $(A,\circ_{A})$.
Moreover, if $(l_{\circ_{A}},r_{\circ_{A}},V)$ and $(l'_{\circ_{A}},r'_{\circ_{A}},V')$
are equivalent as representations of $(A,\circ_{A})$, then $\big( (l^{*}_{\circ_{A}},r^{*}_{\circ_{A}})L,V^{*} \big)=(l^{*}_{\circ_{A}}, -l^{*}_{\circ_{A}}-r^{*}_{\circ_{A}}, V^{*})$ and $\big( (l'^{*}_{\circ_{A}},r'^{*}_{\circ_{A}})L,V'^{*} \big)=(l'^{*}_{\circ_{A}}, -l'^{*}_{\circ_{A}}-r'^{*}_{\circ_{A}}, V'^{*})$ are equivalent as representations of $(A,\circ_{A})$.
\end{lem}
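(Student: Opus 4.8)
The plan is to verify directly that the triple $(l^{*}_{\circ_{A}}, -l^{*}_{\circ_{A}}-r^{*}_{\circ_{A}}, V^{*})$ satisfies the three representation axioms \eqref{eq:rep1}--\eqref{eq:rep3} for a Leibniz algebra, using the dualization formula \eqref{eq:dual} to transfer each identity to a statement about $(l_{\circ_{A}},r_{\circ_{A}},V)$, and then to invoke the hypothesis that $(l_{\circ_{A}},r_{\circ_{A}},V)$ is itself a representation. Concretely, write $\lambda := l^{*}_{\circ_{A}}$ and $\rho := -l^{*}_{\circ_{A}}-r^{*}_{\circ_{A}}$ for the candidate left and right actions on $V^{*}$. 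For each of \eqref{eq:rep1}--\eqref{eq:rep3} I would pair both sides with an arbitrary $v\in V$, push the duals through via $\langle \varphi^{*}(x)u^{*},v\rangle=-\langle u^{*},\varphi(x)v\rangle$, and collect the resulting expressions into a linear combination of terms of the form $\langle u^{*},\psi(x,y)v\rangle$ where each $\psi(x,y)$ is a composite of $l_{\circ_{A}},r_{\circ_{A}}$ evaluated at $x,y$ (or at $x\circ_{A}y$). The claim then reduces to a set of operator identities on $V$ that should follow from \eqref{eq:rep1}--\eqref{eq:rep3} by elementary manipulation.

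The main bookkeeping is the following. Axiom \eqref{eq:rep1} for $(\lambda,\rho)$ is $\lambda(x\circ_{A}y)=\lambda(x)\lambda(y)-\lambda(y)\lambda(x)$, which upon dualizing is exactly $l^{*}_{\circ_{A}}(x\circ_{A}y) = l^{*}_{\circ_{A}}(x)l^{*}_{\circ_{A}}(y) - l^{*}_{\circ_{A}}(y)l^{*}_{\circ_{A}}(x)$; taking duals once more and reversing the order of composition, this is precisely the dual of \eqref{eq:rep1}, hence holds. Axiom \eqref{eq:rep3} for $(\lambda,\rho)$ reads $\rho(y)\lambda(x) = -\rho(y)\rho(x)$, i.e. $\rho(y)\big(\lambda(x)+\rho(x)\big)=0$; since $\lambda(x)+\rho(x) = -r^{*}_{\circ_{A}}(x)$, this becomes $\big(l^{*}_{\circ_{A}}(y)+r^{*}_{\circ_{A}}(y)\big)r^{*}_{\circ_{A}}(x)=0$, which after dualizing is $r_{\circ_{A}}(x)\big(l_{\circ_{A}}(y)+r_{\circ_{A}}(y)\big)=0$, and this is exactly \eqref{eq:rep3} for the original representation. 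Axiom \eqref{eq:rep2} for $(\lambda,\rho)$, namely $\rho(x\circ_{A}y) = \lambda(x)\rho(y)-\rho(y)\lambda(x)$, is the one that genuinely uses all three original axioms: expanding $\rho = -\lambda-r^{*}_{\circ_{A}}$ on both sides and dualizing turns it into a combination of $l_{\circ_{A}}(x\circ_{A}y)$, $r_{\circ_{A}}(x\circ_{A}y)$, and the composites $l_{\circ_{A}}(y)l_{\circ_{A}}(x)$, $r_{\circ_{A}}(y)l_{\circ_{A}}(x)$, etc.; substituting \eqref{eq:rep1} and \eqref{eq:rep2} to rewrite $l_{\circ_{A}}(x\circ_{A}y)$ and $r_{\circ_{A}}(x\circ_{A}y)$, and then using \eqref{eq:rep3} to cancel the leftover terms, should close the identity. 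This last verification is where I expect the bulk of the (still routine) algebra to lie, and is the natural candidate for the ``main obstacle,'' though it is only a matter of careful sign-tracking rather than any conceptual difficulty.

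For the ``In particular'' clause, simply specialize $V=A$ and $(l_{\circ_{A}},r_{\circ_{A}}) = (L_{\circ_{A}},R_{\circ_{A}})$, the adjoint representation, which is a representation of $(A,\circ_{A})$ by the Example preceding Definition~\ref{defi:971} (together with the fact that the defining identity \eqref{eq:Leibniz} of a Leibniz algebra is precisely \eqref{eq:rep1}--\eqref{eq:rep3} for $(L_{\circ_{A}},R_{\circ_{A}},A)$). For the final, equivalence-preserving statement, let $\phi:V\rightarrow V'$ be a linear isomorphism intertwining the two original representations, so that $\phi l_{\circ_{A}}(x) = l'_{\circ_{A}}(x)\phi$ and $\phi r_{\circ_{A}}(x) = r'_{\circ_{A}}(x)\phi$ for all $x\in A$. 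Then the transpose $(\phi^{-1})^{*}=(\phi^{*})^{-1}:V^{*}\rightarrow V'^{*}$ is again a linear isomorphism, and dualizing the two intertwining relations gives $l^{*}_{\circ_{A}}(x)\phi^{*} = \phi^{*}l'^{*}_{\circ_{A}}(x)$ and $r^{*}_{\circ_{A}}(x)\phi^{*} = \phi^{*}r'^{*}_{\circ_{A}}(x)$, i.e. $(\phi^{*})^{-1}l^{*}_{\circ_{A}}(x) = l'^{*}_{\circ_{A}}(x)(\phi^{*})^{-1}$ and likewise for $r^{*}_{\circ_{A}}$; taking the appropriate linear combination $-l^{*}_{\circ_{A}}-r^{*}_{\circ_{A}}$ of the right-hand relations shows $(\phi^{*})^{-1}$ intertwines $\big((l^{*}_{\circ_{A}},r^{*}_{\circ_{A}})L,V^{*}\big)$ with $\big((l'^{*}_{\circ_{A}},r'^{*}_{\circ_{A}})L,V'^{*}\big)$, as desired. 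I would remark that this equivalence-preservation is in fact formal and holds for the passage $(l,r)\mapsto (l,r)M$ for any fixed matrix $M$, so little needs to be said beyond citing the dualization identity \eqref{eq:dual} and linearity.
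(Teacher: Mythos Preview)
Your proposal is correct and follows essentially the same approach as the paper. The paper simply cites \cite{ST} for the first part (that $(l^{*}_{\circ_{A}}, -l^{*}_{\circ_{A}}-r^{*}_{\circ_{A}}, V^{*})$ is a representation) rather than writing out the verification you sketch, and for the equivalence part it uses $\phi^{*}:V'^{*}\to V^{*}$ directly rather than its inverse $(\phi^{*})^{-1}:V^{*}\to V'^{*}$, which is an immaterial difference. One small correction to your plan: in the verification of axiom \eqref{eq:rep2} for $(\lambda,\rho)$, after dualizing and substituting \eqref{eq:rep1} and \eqref{eq:rep2} for the original representation, the identity closes without any leftover terms---\eqref{eq:rep3} is not needed there (it is only needed, as you correctly identify, for axiom \eqref{eq:rep3} itself).
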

\begin{proof}
The first half part follows from \cite{ST}. For the second half
part, note that there exists a linear isomorphism
$\phi:V\rightarrow V' $ satisfying \eqref{eq:eq Leibniz rep}.
 Then we have
 {\small
 \begin{align*}
&l^{*}_{\circ_{A}}(x)\phi^{*}=\big(\phi l_{\circ_{A}}(x)\big)^{*}\overset{\eqref{eq:eq Leibniz rep}}{=}\big(  l'_{\circ_{A}}(x)\phi\big)^{*}=\phi^{*}l'^{*}_{\circ_{A}}(x),\\
&-(l^{*}_{\circ_{A}}+r^{*}_{\circ_{A}})(x)\phi^{*}=-\big(\phi (l_{\circ_{A}}+r_{\circ_{A}})(x)\big)^{*}\overset{\eqref{eq:eq Leibniz rep}}{=}-\big(  (l'_{\circ_{A}}+r'_{\circ_{A}})(x)\phi\big)^{*}
=-\phi^{*}(l'^{*}_{\circ_{A}}+r'^{*}_{\circ_{A}})(x),\;\forall x\in A.
\end{align*}}Hence $(l^{*}_{\circ_{A}}, -l^{*}_{\circ_{A}}-r^{*}_{\circ_{A}}, V^{*})$ and $(l'^{*}_{\circ_{A}}, -l'^{*}_{\circ_{A}}-r'^{*}_{\circ_{A}}, V'^{*})$ are equivalent as representations of $(A,\circ_{A})$.
\end{proof}

\begin{pro}\label{pro:SPA1}
    \begin{enumerate}
        \item\label{1733} Let $A$ be a vector space with the multiplications $\succ_{A}, \prec_{A}:A\otimes A\rightarrow A$.
        Then  $(A,\succ_{A},\prec_{A})$ is a \quadco pre-Leibniz algebra if and only if
        $(A,\succ_{A},\prec_{A})$ is a dual type-$ML$ pre-Leibniz algebra.
        \item\label{1743} Let $(A,\circ_{A})$ be a Leibniz algebra, $\alpha,\beta:A\rightarrow\mathrm{End}_{\mathbb K}(V)$ be linear maps and $T:V\rightarrow A$ be a linear map.
        Then $T$ is a \quadco $\calo$-operator of $(A,\circ_{A})$ if and only if $T$ is a dual type-$ML$ $\calo$-operator of $(A,\circ_{A})$.
    \end{enumerate}
\end{pro}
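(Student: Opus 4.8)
The plan is to unwind both equivalences through the representation identity established in Lemma~\ref{lem:1518}. The key observation is that both parts concern the same underlying condition --- namely, that the triple obtained from $(L_{\succ_A},R_{\prec_A})$ (or from $(\alpha,\beta)$) becomes a representation of $(A,\circ_A)$ after applying a matrix and possibly dualizing --- and that $ML$ is exactly the matrix that converts the ``dual type-$ML$'' condition into the ``type-$M$'' condition. I would begin by writing out, for part~\eqref{1733}, what it means for $(A,\succ_A,\prec_A)$ to be a type-$M$ pre-Leibniz algebra: by definition $\big((L_{\succ_A},R_{\prec_A})M,A\big)=(a_1L_{\succ_A}+a_2R_{\prec_A},\,b_1L_{\succ_A}+b_2R_{\prec_A},A)$ is a representation of the sub-adjacent Leibniz algebra $(A,\circ_A)$, where $\circ_A=\succ_A+\prec_A$. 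Then I would write out what it means to be a dual type-$ML$ pre-Leibniz algebra: $\big((L^*_{\succ_A},R^*_{\prec_A})(ML),A^*\big)$ is a representation of $(A,\circ_A)$.

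The crux is the identity $(L^*_{\succ_A},R^*_{\prec_A})(ML) = \big((L_{\succ_A},R_{\prec_A})M\big)^{*}L$, which follows formally because dualization is a (contravariant but here just coefficient-wise) operation commuting with taking $\mathbb{K}$-linear combinations: if $\rho = a_1 L_{\succ_A}+a_2 R_{\prec_A}$ then $\rho^* = a_1 L^*_{\succ_A}+a_2 R^*_{\prec_A}$, since the map $f\mapsto f^*$ in \eqref{eq:dual} is $\mathbb{K}$-linear. Hence writing $(l_{\circ_A},r_{\circ_A}):=(L_{\succ_A},R_{\prec_A})M$, the dual type-$ML$ condition says precisely that $\big((l^*_{\circ_A},r^*_{\circ_A})L,A^*\big)$ is a representation of $(A,\circ_A)$. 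Now Lemma~\ref{lem:1518} gives exactly the equivalence: $(l_{\circ_A},r_{\circ_A},A)$ is a representation of $(A,\circ_A)$ if and only if $\big((l^*_{\circ_A},r^*_{\circ_A})L,A^*\big)$ is. Strictly, Lemma~\ref{lem:1518} as stated gives only the forward direction (representation $\Rightarrow$ dual-twisted representation); I would need the converse too. This converse should hold because $L$ is invertible with $L^{-1}=\begin{pmatrix}1 & -1\\ 0 & -1\end{pmatrix}=L$ (indeed $L^2 = I$), so applying the lemma twice --- once to $(l_{\circ_A},r_{\circ_A})$ and once to its $L$-twisted dual, together with the canonical identification $V^{**}\cong V$ --- recovers the original representation; alternatively one checks directly that the three Leibniz representation identities \eqref{eq:rep1}--\eqref{eq:rep3} are preserved under the operation $(l,r)\mapsto (l^*, -l^*-r^*)$ in both directions. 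I expect this bookkeeping around the involutivity of $L$ (or a direct verification of the converse of Lemma~\ref{lem:1518}) to be the only genuine point requiring care; everything else is linearity of dualization.

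For part~\eqref{1743}, I would argue completely analogously. The equation \eqref{eq:dual generic O-ope} defining a type-$M$ $\calo$-operator, $(Tu)\circ_A(Tv)=T\big(\alpha(Tu)v+\beta(Tv)u\big)$, is literally the same equation appearing in the definition of a dual type-$ML$ $\calo$-operator; the only difference between the two notions is the accompanying representation condition. A type-$M$ $\calo$-operator of $(A,\circ_A)$ associated to $(\alpha,\beta,V)$ requires $\big((\alpha,\beta)M,V\big)$ to be a representation of $(A,\circ_A)$, while a dual type-$ML$ $\calo$-operator requires $\big((\alpha^*,\beta^*)(ML),V^*\big)=\big(((\alpha,\beta)M)^*L,V^*\big)$ to be a representation. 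Setting $(l_{\circ_A},r_{\circ_A}):=(\alpha,\beta)M$, these two conditions are interchanged precisely by Lemma~\ref{lem:1518} (and its converse, justified as above via $L^2=I$). Hence $T$ is a type-$M$ $\calo$-operator if and only if it is a dual type-$ML$ $\calo$-operator, and the two parts are proved by the same mechanism. I would remark that the ``strong'' refinements also match up, since the multiplication $\circ_V$ on $V$ in \eqref{eq:circ} is determined by the same formula $\alpha(Tu)v+\beta(Tv)u$ in both settings, so no separate argument is needed there.
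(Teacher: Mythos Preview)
Your proposal is correct and follows essentially the same approach as the paper: both reduce the equivalence to Lemma~\ref{lem:1518} applied to the pair $(l_{\circ_A},r_{\circ_A}):=(L_{\succ_A},R_{\prec_A})M$ (respectively $(\alpha,\beta)M$). Your treatment is in fact more explicit than the paper's, which simply writes ``the converse side is obtained similarly''; you correctly identify that the converse relies on $L^2=I$ together with the finite-dimensional identification $V^{**}\cong V$, so that a second application of Lemma~\ref{lem:1518} returns the original triple.
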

\begin{proof}
\eqref{1733} Suppose that $(A,\succ_{A},\prec_{A})$ is a \quadco pre-Leibniz algebra. Then $(A,\circ_{A}=\succ_{A}+\prec_{A})$ is a Leibniz algebra, and \repequ is a representation of $(A,\circ_{A})$.
By  Lemma \ref{lem:1518}, $\big( ( L^{*}_{\succ_{A}}, R^{*}_{\prec_{A}})ML,A^{*}
\big)=\big(a_{1} L^{*}_{\succ_{A}}+a_{2} R^{*}_{\prec_{A}},
(-a_{1}-b_{1}) L^{*}_{\succ_{A}}
+(-a_{2}-b_{2}) R^{*}_{\prec_{A}},A^{*}\big)
$ is a representation of $(A,\circ_{A})$.
Hence $(A,\succ_{A},\prec_{A})$ is a dual type-$ML$
  pre-Leibniz algebra. The converse side is obtained similarly.

\eqref{1743} It also follows from Lemma \ref{lem:1518}.
\end{proof}

\begin{ex} \label{e:leib1}
    Any Leibniz algebra $(A,\circ_{A})$ is equivalently characterized as a type-\begin{math}
        \begin{pmatrix}
            0 &-1\\
            1 &1
        \end{pmatrix}
    \end{math} pre-Leibniz algebra, or equivalently, a dual type-\begin{math}
 \begin{pmatrix}
        0 &-1\\
        1 &1
    \end{pmatrix}
L=  \begin{pmatrix}
0 & 1\\
1 &-2
\end{pmatrix}
\end{math} pre-Leibniz algebra $(A,\succ_{A}$, $\prec_{A})$, where
\begin{eqnarray}\label{eq:1742}
x\succ_{A}y=x\circ_{A}y-y\circ_{A}x,\;
x\prec_{A}y=y\circ_{A}x,\;\forall x,y\in A.
\end{eqnarray}
On the other hand, suppose that $\omega$ is a nondegenerate antisymmetric bilinear form on a
Leibniz algebra $(A,\circ_{A})$ which is invariant \cite{Chap} in
the sense of
\begin{equation}\label{eq:568}
 \omega(x,y\circ_{A}z)=\omega(x\circ_{A}z+z\circ_{A}x,y),\;\;\forall x,y,z\in A.
\end{equation}
Then \eqref{eq:1299} holds by taking \begin{math}
    M=  \begin{pmatrix}
        0 & 1\\
        1 &-2
    \end{pmatrix}
\end{math}.
By Theorem \ref{thm:3}, there is an induced compatible dual \quadco pre-Leibniz algebra $(A,\succ_{A},\prec_{A})$ given by
\begin{eqnarray*}
&&  -\omega(x\succ_{A}y,z)=\omega(y,z\circ_{A}x+2x\circ_{A}z)\overset{\eqref{eq:568}}{=}\omega(y\circ_{A}x-x\circ_{A}y,z),\\
&&-\omega(x\prec_{A}y,z)=\omega(x,y\circ_{A}z)\overset{\eqref{eq:568}}{=}-\omega(y\circ_{A}x,z).
\end{eqnarray*}
Therefore, we recover \eqref{eq:1742}.
Moreover, $( L_{\circ_{A}}, R_{\circ_{A}},A)$ and $\big( ( L^{*}_{\succ_{A}}, R^{*}_{\prec_{A}})M, A^{*} \big)=( L^{*}_{\circ_{A}}, - L^{*}_{\circ_{A}}- R^{*}_{\circ_{A}},A^{*})$ are equivalent as representations of  $(A,\circ_{A})$, which recovers \cite[Lemma 4.16]{ST}.
\end{ex}

\begin{ex} \label{e:leib2}
    Recall that a {\bf pre-Leibniz algebra} (originally named as a Leibniz dendriform algebra in \cite{ST}) is a vector space $A$ together with multiplications $\succ_{A},\prec_{A}:A\otimes A\rightarrow A$ satisfying the following identities:
    \begin{eqnarray*}
&&(x\circ_{A}y)\succ_{A}z=x\succ_{A}(y\succ_{A}z)- y\succ_{A}(x\succ_{A}z),\\
&&z\prec_{A}(x\circ_{A}y)= x\succ_{A}(z\prec_{A}y)-(x\succ_{A}z)\prec_{A}y,\\
&&(x\succ_{A}z)\prec_{A}y=-(z\prec_{A}x)\prec_{A}y,\;\forall x,y,z\in A,
    \end{eqnarray*}
where $x\circ_{A}y=x\succ_{A}y+x\prec_{A}y$ for all $x,y\in A$.
In fact, $(A,\succ_{A},\prec_{A})$ is a pre-Leibniz algebra if and only if $(A,\succ_{A},\prec_{A})$ is a
type-$I$ pre-Leibniz algebra for \begin{math}
    I=\begin{pmatrix}
        1 &0 \\
        0 &1
    \end{pmatrix}
\end{math}, as well as a dual
type-$(IL=)L$ pre-Leibniz algebra.
On the other hand, suppose that $(A,\circ_{A},\mathcal{B})$ is a symplectic Leibniz algebra \cite{TXS}, that is, $\mathcal{B}$ is a nondegenerate symmetric bilinear form on a Leibniz algebra $(A,\circ_{A})$ such that
\begin{eqnarray*} \mathcal{B}(z,x\circ_{A}y)=-\mathcal{B}(y,x\circ_{A}z)+\mathcal{B}(x,y\circ_{A}z)+\mathcal{B}(x,z\circ_{A}y),\;\forall x,y,z\in A.
\end{eqnarray*}
Then \eqref{eq:1299} holds by taking \begin{math}
M=L
\end{math}.
By Theorem \ref{thm:3}, there is an induced compatible dual $L$-type pre-Leibniz algebra (or equivalently, pre-Leibniz algebra) $(A,\succ_{A},\prec_{A})$ given by
\begin{eqnarray*}
&&\mathcal{B}(x\succ_{A}y,z)=-\mathcal{B}(y,x\circ_{A}z),\\ &&\mathcal{B}(x\prec_{A}y,z)=\mathcal{B}(x,y\circ_{A}z)+\mathcal{B}(x,z\circ_{A}y),\;\forall x,y,z\in A,
\end{eqnarray*}
which recovers \cite[Proposition 3.2]{TXS}.
Moreover, $( L_{\circ_{A}}, R_{\circ_{A}},A)$ and $\big( ( L^{*}_{\succ_{A}}, R^{*}_{\prec_{A}})L, A^{*} \big)=( L^{*}_{\succ_{A}}, - L^{*}_{\succ_{A}}- R^{*}_{\prec_{A}},A^{*})$ are equivalent as representations of  $(A,\circ_{A})$.
\end{ex}

\subsection{Special \ctplas}\

\begin{pro}\label{pdef:1647}
    Let $A$ be a vector space with multiplications $\succ_{A},\prec_{A}:A\otimes A\rightarrow A$. Then the following statements are equivalent.
    \begin{enumerate}
        \item\label{equ1} $(A,\succ_{A},\prec_{A})$ is a \ctpla, where $a= \begin{pmatrix}
            1 & -1\\
            -1 & 0
        \end{pmatrix}$.
        \item\label{equ2} $(A,\succ_{A},\prec_{A})$ is a dual type-$b$ pre-Leibniz algebra, where $b=aL=\begin{pmatrix}
            1 & 0\\
            -1 & 1
        \end{pmatrix}$.
    \item\label{equ3} $(A,\succ_{A},\prec_{A})$ is a Leibniz-admissible algebra and the following equations hold:
    \begin{eqnarray}
        &&(x\circ_{A}y)\bullet_{A}z=x\bullet_{A}(y\bullet_{A}z)-y\bullet_{A}(x\bullet_{A}z),\label{eq:gppa2,2}\\
        &&z\prec_{A}(x\circ_{A}y)=x\bullet_{A}(z\prec_{A}y)-(x\bullet_{A}z)\prec_{A}y,\label{eq:gppa3,2}\\
        &&x\bullet_{A}(z\prec_{A}y)=-(z\prec_{A}y)\prec_{A}x,\;\forall x,y,z\in A,\label{eq:gppa4,2}
    \end{eqnarray}
where \begin{equation}\label{eq:Dpp2}
x\circ_{A}y:=x\succ_{A}y+x\prec_{A}y,\;\;    x\bullet_{A}y:=x\succ_{A}y-y\prec_{A}x,\;\forall x,y\in A.
\end{equation}
    \end{enumerate}
Moreover, in this case both $( L^{*}_{\bullet_{A}},
 R^{*}_{\prec_{A}},A^{*}
)$ and $( L_{\bullet_{A}},- L_{\succ_{A}},A)$ are representations of $(A,\circ_{A})$.
\end{pro}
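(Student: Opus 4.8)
The plan is to establish the equivalence of \eqref{equ1}, \eqref{equ2}, \eqref{equ3} by a short cycle, then treat the final ``Moreover'' claim separately. The equivalence \eqref{equ1} $\Leftrightarrow$ \eqref{equ2} is immediate from Proposition~\ref{pro:SPA1}\eqref{1733}: a type-$a$ pre-Leibniz algebra is the same as a dual type-$aL$ pre-Leibniz algebra, and one computes $aL = \begin{pmatrix} 1 & -1\\ -1 & 0\end{pmatrix}\begin{pmatrix} 1 & -1\\ 0 & -1\end{pmatrix} = \begin{pmatrix} 1 & 0\\ -1 & 1\end{pmatrix} = b$. So the work is in \eqref{equ2} $\Leftrightarrow$ \eqref{equ3}.

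For \eqref{equ2} $\Leftrightarrow$ \eqref{equ3}, I would unwind the definition of a dual type-$b$ pre-Leibniz algebra: $(A,\succ_A,\prec_A)$ is Leibniz-admissible and $\big((L^{*}_{\succ_A},R^{*}_{\prec_A})b,A^{*}\big) = (L^{*}_{\succ_A} - R^{*}_{\prec_A},\, R^{*}_{\prec_A}, A^{*})$ is a representation of $(A,\circ_A)$. Now observe that, under the dualization convention \eqref{eq:dual}, $L^{*}_{\succ_A} - R^{*}_{\prec_A}$ is precisely $-(L_{\succ_A} - \text{(transpose-type action of }\prec_A))^{*}$ rearranged into $L_{\bullet_A}^{*}$ where $x\bullet_A y := x\succ_A y - y\prec_A x$; more carefully, $\langle (L^{*}_{\succ_A}-R^{*}_{\prec_A})(x)\xi, y\rangle = -\langle \xi, x\succ_A y - y\prec_A x\rangle = -\langle \xi, x\bullet_A y\rangle = \langle L^{*}_{\bullet_A}(x)\xi, y\rangle$, so the first component equals $L^{*}_{\bullet_A}$ and the second equals $R^{*}_{\prec_A}$. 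Thus being a dual type-$b$ pre-Leibniz algebra amounts to: $(A,\circ_A)$ is Leibniz and $(L^{*}_{\bullet_A}, R^{*}_{\prec_A}, A^{*})$ is a representation of $(A,\circ_A)$. Then I would write out the three defining equations \eqref{eq:rep1}--\eqref{eq:rep3} for the triple $(L^{*}_{\bullet_A}, R^{*}_{\prec_A}, A^{*})$, pair each against an arbitrary element of $A$ to pass from $A^{*}$ back to $A$, and check that the three resulting identities are exactly \eqref{eq:gppa2,2}, \eqref{eq:gppa3,2}, \eqref{eq:gppa4,2} respectively (after using $x\circ_A y = x\succ_A y + x\prec_A y$ to rewrite $\bullet_A$ in terms of $\circ_A$ where needed, noting $x\circ_A y = x\bullet_A y + (y\prec_A x + x\prec_A y)$ and $x\succ_A y = x\bullet_A y + y\prec_A x$). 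Conversely, reversing these pairings shows \eqref{equ3} $\Rightarrow$ \eqref{equ2}. I expect the matching of equation \eqref{eq:rep2} with \eqref{eq:gppa3,2} to be the main bookkeeping obstacle, since it mixes a left action, a right action, and a product $x\circ_A y$, so one has to be careful about which of $\bullet_A,\succ_A,\prec_A$ appears on which side after dualizing; the cleanest route is to verify it at the level of operators on $A^{*}$ first and only then transpose.

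For the final assertion, once \eqref{equ1}--\eqref{equ3} are known to be equivalent, the representation $(L^{*}_{\bullet_A}, R^{*}_{\prec_A}, A^{*})$ of $(A,\circ_A)$ is exactly the reformulation of the dual type-$b$ condition obtained above, so that claim is already in hand. It then remains to show $(L_{\bullet_A}, -L_{\succ_A}, A)$ is a representation of $(A,\circ_A)$. Here I would apply Lemma~\ref{lem:1518}: since $(L^{*}_{\bullet_A}, R^{*}_{\prec_A}, A^{*})$ is a representation, taking duals of it and twisting by the matrix $L$ gives that $\big((L^{**}_{\bullet_A}, R^{**}_{\prec_A})L, A^{**}\big) = (L^{**}_{\bullet_A}, -L^{**}_{\bullet_A} - R^{**}_{\prec_A}, A^{**})$ is a representation; identifying $A^{**}$ with $A$ turns $L^{**}_{\bullet_A}$ into $L_{\bullet_A}$ and $R^{**}_{\prec_A}$ into $R_{\prec_A}$, so $(L_{\bullet_A}, -L_{\bullet_A} - R_{\prec_A}, A)$ is a representation of $(A,\circ_A)$. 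Finally, from \eqref{eq:Dpp2} one has $x\bullet_A y + y\prec_A x = x\succ_A y$, i.e.\ $L_{\bullet_A}(x)y + R_{\prec_A}(x)y = L_{\succ_A}(x)y$ for all $x,y$, hence $-L_{\bullet_A} - R_{\prec_A} = -L_{\succ_A}$, which yields the stated representation $(L_{\bullet_A}, -L_{\succ_A}, A)$. The only subtlety to flag is making sure the matrix twist in Lemma~\ref{lem:1518} is applied to the correct representation and that the double-dual identifications are done consistently with the sign convention in \eqref{eq:dual}.
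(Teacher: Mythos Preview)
Your approach is correct and matches the paper's: \eqref{equ1}$\Leftrightarrow$\eqref{equ2} via Proposition~\ref{pro:SPA1}, and \eqref{equ2}$\Leftrightarrow$\eqref{equ3} by identifying the dual type-$b$ representation $\big((L^{*}_{\succ_A},R^{*}_{\prec_A})b,A^{*}\big)$ with $(L^{*}_{\bullet_A}, R^{*}_{\prec_A}, A^{*})$ and then pairing \eqref{eq:rep1}--\eqref{eq:rep3} against elements of $A$ to obtain exactly \eqref{eq:gppa2,2}--\eqref{eq:gppa4,2}. One simplification: for the final claim that $(L_{\bullet_A}, -L_{\succ_A}, A)$ is a representation, your detour through Lemma~\ref{lem:1518} and the double-dual identification works, but it is unnecessary---since $\big((L_{\succ_A},R_{\prec_A})a,A\big) = (L_{\succ_A}-R_{\prec_A},\, -L_{\succ_A},\, A) = (L_{\bullet_A}, -L_{\succ_A}, A)$, this representation is literally the content of condition~\eqref{equ1}.
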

\begin{proof}
\eqref{equ1}$\Longleftrightarrow$\eqref{equ2}   It follows from Proposition \ref{pro:SPA1} by taking $a_{1}=1, a_{2}=b_{1}=-1, b_{2}=0$.

\eqref{equ2}$\Longleftrightarrow$\eqref{equ3} By \eqref{eq:Dpp2}, we have $ L_{\bullet_{A}}= L_{\succ_{A}}- R_{\prec_{A}}$.
Then for all $x,y,z\in A, a^{*}\in A^{*}$, we have
\begin{eqnarray*}
&&\langle  L^{*}_{\bullet_{A}}(x\circ_{A}y)a^{*},z\rangle=-\langle a^{*},(x\circ_{A}y)\bullet_{A}z\rangle,\\
&&
\langle  L^{*}_{\bullet_{A}}(x) L^{*}_{\bullet_{A}}(y)a^{*},z\rangle=\langle a^{*}, y\bullet_{A}(x\bullet_{A}z)\rangle,\\
&& \langle L^{*}_{\bullet_{A}}(y) L^{*}_{\bullet_{A}}(x)a^{*},z\rangle=\langle a^{*}, x\bullet_{A}(y\bullet_{A}z)\rangle.
\end{eqnarray*}
Hence $( L^{*}_{\bullet_{A}}= L^{*}_{\succ_{A}}- R^{*}_{\prec_{A}},
 R^{*}_{\prec_{A}},A^{*}
)$ satisfies \eqref{eq:rep1}
if and only if \eqref{eq:gppa2,2} holds.
Similarly, $( L^{*}_{\bullet_{A}},
 R^{*}_{\prec_{A}},A^{*}
)$ satisfies \eqref{eq:rep2} and \eqref{eq:rep3}  if and only if \eqref{eq:gppa3,2} and \eqref{eq:gppa4,2} hold respectively.
In conclusion, $( L^{*}_{\bullet_{A}},
 R^{*}_{\prec_{A}},A^{*}
)$ is a representation of $(A,\circ_{A})$ if and only if  \eqref{eq:gppa2,2}-\eqref{eq:gppa4,2} hold.
\end{proof}

\begin{cor}\label{cor:420}
    Let $\mathcal{B}$ be a nondegenerate bilinear form on a Leibniz algebra $(A,\circ_{A})$ satisfying
    \begin{equation}\label{eq:left inv1}        \mathcal{B}(x\circ_{A}y,z)=-\mathcal{B}(y,x\circ_{A}z+z\circ_{A}x)
        -\mathcal{B}(x,z\circ_{A}y),\;\forall x,y,z\in A.
    \end{equation}
    Then there is a compatible  \ctpla structure $(A,\succ_{A},\prec_{A})$ with  multiplications $\succ_{A},\prec_{A}:A\otimes
    A\rightarrow A$ defined respectively by
    \begin{eqnarray}
        &&\mathcal{B}(x\succ_{A}y,z)=-\mathcal{B}(y,x\circ_{A}z+z\circ_{A}x),\label{eq:cor3}\\
        &&\mathcal{B}(x\prec_{A}y,z)=-\mathcal{B}(x,z\circ_{A}y).\label{eq:cor4}
    \end{eqnarray}
    Moreover, $( L_{\circ_{A}}, R_{\circ_{A}},A)$ and $( L^{*}_{\bullet_{A}}$, $R^{*}_{\prec_{A}},A^{*})$ are equivalent as representations of $(A,\circ_{A})$, and $( L_{\bullet_{A}}$, $- L_{\succ_{A}},$
$A)$ and  $( L^{*}_{\circ_{A}}$,
$- L^{*}_{\circ_{A}}
    - R^{*}_{\circ_{A}},A^{*})$ are equivalent as representations of $(A,\circ_{A})$.

    Conversely,
    let $(A,\succ_{A},\prec_{A})$ be a compatible \ctpla structure on a Leibniz algebra $(A,\circ_{A})$. If  $( L _{\circ_{A}}, R _{\circ_{A}},A)$ and $(
     L^{*}_{\bullet_{A}}, R^{*}_{\prec_{A}},A^{*})$
    are equivalent as representations of $(A,\circ_{A})$, then there exists a nondegenerate bilinear form $\mathcal{B}$ on $ A$ satisfying \eqref{eq:left inv1}.
\end{cor}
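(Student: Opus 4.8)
The plan is to read this corollary off from Theorem~\ref{thm:3} by specializing the coefficient matrix, using Proposition~\ref{pdef:1647} to translate between dual type-$b$ and type-$a$ pre-Leibniz algebras, and using Lemma~\ref{lem:1518} for the second equivalence of representations. First I would record, as was already observed in the introduction around \eqref{eq:a-type}, that \eqref{eq:left inv1} is precisely the type-$M$ invariant condition \eqref{eq:1299} for $M=b=\begin{pmatrix}1&0\\-1&1\end{pmatrix}$: here $|b|=1$ and $(a_1,b_1,a_2,b_2)=(1,0,-1,1)$, so the right-hand side of \eqref{eq:1299} collapses to $\mathcal{B}(x,-z\circ_A y)+\mathcal{B}(y,-z\circ_A x-x\circ_A z)$, which is \eqref{eq:left inv1}.

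Then I would invoke the first half of Theorem~\ref{thm:3} with $M=b$. It yields a compatible dual type-$b$ pre-Leibniz structure whose products, obtained by substituting the above coefficients into \eqref{eq:16}--\eqref{eq:17} and using $|b|=1$, are exactly \eqref{eq:cor3}--\eqref{eq:cor4}; it also gives that $(L_{\circ_A},R_{\circ_A},A)$ is equivalent to $\big((L^*_{\succ_A},R^*_{\prec_A})b,A^*\big)=(L^*_{\succ_A}-R^*_{\prec_A},R^*_{\prec_A},A^*)$. Since $b=aL$ and $L_{\bullet_A}=L_{\succ_A}-R_{\prec_A}$ by \eqref{eq:Dpp2}, this last representation is $(L^*_{\bullet_A},R^*_{\prec_A},A^*)$, which is the first asserted equivalence. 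Applying the equivalence \ref{equ1}$\Leftrightarrow$\ref{equ2} of Proposition~\ref{pdef:1647} then recasts the dual type-$b$ structure as a type-$a$ pre-Leibniz structure with $a=\begin{pmatrix}1&-1\\-1&0\end{pmatrix}$, as required.

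For the second equivalence I would apply the dualization of Lemma~\ref{lem:1518}, which turns a representation $(l,r,V)$ into $(l^*,-l^*-r^*,V^*)$ and preserves equivalence, to the pair $(L_{\circ_A},R_{\circ_A},A)\sim(L^*_{\bullet_A},R^*_{\prec_A},A^*)$ just obtained. The left member becomes $(L^*_{\circ_A},-L^*_{\circ_A}-R^*_{\circ_A},A^*)$; the right member becomes $(L_{\bullet_A},-L_{\bullet_A}-R_{\prec_A},A)$ after identifying $A^{**}\cong A$ via $(\rho^*)^*=\rho$, and $-L_{\bullet_A}-R_{\prec_A}=-L_{\succ_A}$ by \eqref{eq:Dpp2}, giving $(L_{\bullet_A},-L_{\succ_A},A)$. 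For the converse direction, assuming a compatible type-$a$ structure together with the equivalence $(L_{\circ_A},R_{\circ_A},A)\sim(L^*_{\bullet_A},R^*_{\prec_A},A^*)$, Proposition~\ref{pdef:1647} again identifies it with a dual type-$b$ structure and rewrites $(L^*_{\bullet_A},R^*_{\prec_A},A^*)$ as $\big((L^*_{\succ_A},R^*_{\prec_A})b,A^*\big)$, so the hypotheses of the second half of Theorem~\ref{thm:3} with $M=b$ (and $|b|=1\neq0$) are met and produce a nondegenerate $\mathcal{B}$ satisfying \eqref{eq:1299}, i.e.\ \eqref{eq:left inv1}. The only point requiring care is the consistent substitution $M=b$ across all the formulas of Theorem~\ref{thm:3} and keeping the identity $L_{\bullet_A}=L_{\succ_A}-R_{\prec_A}$ in mind; no genuinely new computation is needed.
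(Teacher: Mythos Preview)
Your proposal is correct and follows exactly the paper's approach: the paper's proof consists of the single sentence that the result follows from Theorem~\ref{thm:3} with $M=b$, Proposition~\ref{pdef:1647}, and Lemma~\ref{lem:1518} applied to the pair $(L_{\circ_A},R_{\circ_A},A)$ and $(L^*_{\bullet_A},R^*_{\prec_A},A^*)$. You have simply unpacked each of these invocations with the explicit coefficient substitutions and the identity $L_{\bullet_A}=L_{\succ_A}-R_{\prec_A}$, which is precisely what the paper leaves to the reader.
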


\begin{proof}
This result follows from  Theorem \ref{thm:3} by taking $M=b$,
Proposition \ref{pdef:1647}
 and Lemma \ref{lem:1518} by taking $(l_{\circ_{A}},r_{\circ_{A}},V)=( L_{\circ_{A}}, R_{\circ_{A}},A)$ and $(l'_{\circ_{A}},r'_{\circ_{A}},V')=( L^{*}_{\bullet_{A}}, R^{*}_{\prec_{A}},A^{*}).$
\end{proof}

In the following, we shall study a special subclass of \ctplas.

\begin{defi}
A \ctpla $(A,\succ_{A},\prec_{A})$ is called {\bf special} if the multiplication $\prec_{A}$ is anticommutative, that is,  $x\prec_{A}y=-y\prec_{A}x$ for all $x,y\in A$.
\end{defi}

\begin{cor}\label{cor:SDPP}
    Let $A$ be a vector space with multiplications $\succ_{A}
    ,\prec_{A}:A\otimes A\rightarrow A$.
    Define a multiplication $\circ_{A} :A\otimes A\rightarrow A$ by
    \eqref{eq:Dpp2}.
    Then the following statements are equivalent:
    \begin{enumerate}
        \item\label{s1} $(A,\succ_{A},\prec_{A})$ is a \sctpla.
        \item\label{s4} The multiplication $\prec_{A}$ is anticommutative,  $(A,\circ_{A})$ is a Leibniz algebra and the following equation holds:
    \begin{equation}\label{eq:SDPP}
            x\circ_{A}(y\prec_{A}z)=(x\circ_{A}y)\prec_{A}z
+y\prec_{A}(x\circ_{A}z)=x\prec_{A}(y\prec_{A}z),\;\forall x,y,z\in A.
        \end{equation}
    \end{enumerate}
Moreover, in this case both $( L^{*}_{\circ_{A}},
 R^{*}_{\prec_{A}},A^{*}
)$ and $( L_{\circ_{A}},- L_{\succ_{A}},A)$ are representations of $(A,\circ_{A})$.
\end{cor}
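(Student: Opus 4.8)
The plan is to obtain Corollary~\ref{cor:SDPP} as a direct specialization of Proposition~\ref{pdef:1647}, using the equivalence \eqref{equ1}$\Longleftrightarrow$\eqref{equ3} together with the simplification forced by anticommutativity of $\prec_A$. The crucial first observation is that when $x\prec_A y=-y\prec_A x$ for all $x,y\in A$, the auxiliary product $\bullet_A$ of \eqref{eq:Dpp2} collapses onto $\circ_A$:
\[
x\bullet_A y=x\succ_A y-y\prec_A x=x\succ_A y+x\prec_A y=x\circ_A y,\qquad\forall x,y\in A.
\]
Consequently, under the anticommutativity hypothesis the three defining identities \eqref{eq:gppa2,2}--\eqref{eq:gppa4,2} of a \ctpla may be rewritten purely in terms of $\circ_A$ and $\prec_A$, and each rewriting step (replacing $\bullet_A$ by $\circ_A$, applying anticommutativity, relabelling the universally quantified variables) is reversible.

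Carrying this out, identity \eqref{eq:gppa2,2} becomes $(x\circ_A y)\circ_A z=x\circ_A(y\circ_A z)-y\circ_A(x\circ_A z)$, which is precisely the Leibniz identity \eqref{eq:Leibniz}; hence it is automatically satisfied once $(A,\circ_A)$ is a Leibniz algebra and contributes no extra constraint. For \eqref{eq:gppa3,2}, one solves for $x\circ_A(z\prec_A y)$ and substitutes $z\prec_A y=-y\prec_A z$, $z\prec_A(x\circ_A y)=-(x\circ_A y)\prec_A z$ and $(x\circ_A z)\prec_A y=-y\prec_A(x\circ_A z)$; this turns \eqref{eq:gppa3,2} (equivalently) into $x\circ_A(y\prec_A z)=(x\circ_A y)\prec_A z+y\prec_A(x\circ_A z)$, the first equality of \eqref{eq:SDPP}. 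Similarly \eqref{eq:gppa4,2} reads $x\circ_A(z\prec_A y)=-(z\prec_A y)\prec_A x=x\prec_A(z\prec_A y)$, which after relabelling is $x\circ_A(y\prec_A z)=x\prec_A(y\prec_A z)$, the remaining part of the chained equality \eqref{eq:SDPP}. Therefore, for $(A,\succ_A,\prec_A)$ with $\prec_A$ anticommutative, being a \ctpla is equivalent to $(A,\circ_A)$ being a Leibniz algebra together with \eqref{eq:SDPP}; this is exactly \eqref{s1}$\Longleftrightarrow$\eqref{s4}.

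For the concluding assertion about representations, I would simply invoke the ``Moreover'' clause of Proposition~\ref{pdef:1647}: for any \ctpla, both $( L^{*}_{\bullet_{A}}, R^{*}_{\prec_{A}},A^{*})$ and $( L_{\bullet_{A}},- L_{\succ_{A}},A)$ are representations of $(A,\circ_{A})$. Since $\bullet_A=\circ_A$ holds here by anticommutativity of $\prec_A$, this immediately gives that $( L^{*}_{\circ_{A}}, R^{*}_{\prec_{A}},A^{*})$ and $( L_{\circ_{A}},- L_{\succ_{A}},A)$ are representations of $(A,\circ_{A})$, as claimed.

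I do not expect a genuine obstacle here, since the computation is short and entirely formal; the only points demanding care are the sign bookkeeping when applying anticommutativity of $\prec_A$ inside \eqref{eq:gppa3,2} and \eqref{eq:gppa4,2}, and the correct decomposition of the chained equality \eqref{eq:SDPP} into its two constituent identities — the first arising from \eqref{eq:gppa3,2}, the second from \eqref{eq:gppa4,2} — with \eqref{eq:gppa2,2} dropping out as a consequence of the Leibniz identity.
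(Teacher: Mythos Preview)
Your proposal is correct and follows the same route as the paper: the paper's proof simply notes that $\prec_A$ is anticommutative if and only if $\circ_A=\bullet_A$, and then appeals to Proposition~\ref{pdef:1647}. You have spelled out in detail exactly how the three identities \eqref{eq:gppa2,2}--\eqref{eq:gppa4,2} specialize under $\bullet_A=\circ_A$ (with \eqref{eq:gppa2,2} collapsing to the Leibniz identity and \eqref{eq:gppa3,2}, \eqref{eq:gppa4,2} yielding the two halves of \eqref{eq:SDPP}), and your sign bookkeeping and use of the ``Moreover'' clause for the representations are both fine.
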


\begin{proof}
It is clear that $\prec_{A}$ is anticommutative if and only if $\circ_{A}=\bullet_{A}$.
Hence the conclusion follows from Proposition \ref{pdef:1647}.
\end{proof}

\begin{rmk}
The equation \eqref{eq:SDPP} has been observed in \cite[Proposition 1.6]{BeHi}.
From now on, we shall use Corollary \ref{cor:SDPP} \eqref{s4} as an explicit definition of a \sctpla.
\end{rmk}

\begin{cor}\label{cor:420-}
    Let $\mathcal{B}$ be a nondegenerate bilinear form on a Leibniz algebra $(A,\circ_{A})$ satisfying
        \begin{eqnarray}\label{eq:1825} \mathcal{B}(x\circ_{A}y,z)=-\mathcal{B}(y,x\circ_{A}z)=-\mathcal{B}(x\circ_{A}z,y),\;\forall x,y,z\in A.
    \end{eqnarray}
    Then there is a compatible  \sctpla structure $(A,\succ_{A},\prec_{A})$ with  multiplications $\succ_{A},\prec_{A}:A\otimes
    A\rightarrow A$ respectively defined by \eqref{eq:cor3} and \eqref{eq:cor4}.
    Moreover, $( L_{\circ_{A}}, R_{\circ_{A}},A)$ and $( L^{*}_{\circ_{A}}, R^{*}_{\prec_{A}},A^{*})$ are equivalent as representations of $(A,\circ_{A})$, and $( L_{\circ_{A}},$
$- L_{\succ_{A}},A)$ and  $( L^{*}_{\circ_{A}},- L^{*}_{\circ_{A}}
- R^{*}_{\circ_{A}},A^{*})$ are equivalent as representations of $(A,\circ_{A})$.

Conversely, let $(A,\succ_{A},\prec_{A})$ be a compatible \sctpla structure on a Leibniz algebra $(A,\circ_{A})$. If  $( L _{\circ_{A}}, R _{\circ_{A}},A)$ and $(L^{*}_{\circ_{A}}, R^{*}_{\prec_{A}},A^{*})$
are equivalent as representations of $(A,\circ_{A})$, then there exists a nondegenerate bilinear form $\mathcal{B}$ on $ A$ satisfying \eqref{eq:1825}.
\end{cor}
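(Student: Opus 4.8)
The plan is to reduce the claim to Corollary~\ref{cor:420} (its non-special analogue for \ctplas) and to the characterization of \sctplas in Corollary~\ref{cor:SDPP}; the only genuinely new work is to compare the invariance condition~\eqref{eq:1825} with the type-$a$ invariance condition~\eqref{eq:left inv1}. Concretely, I would first establish the following bridge: for a nondegenerate bilinear form $\mathcal{B}$ on a Leibniz algebra $(A,\circ_{A})$, condition~\eqref{eq:1825} holds if and only if \eqref{eq:left inv1} holds together with
\[
\mathcal{B}(x,z\circ_{A}y)+\mathcal{B}(y,z\circ_{A}x)=0,\qquad\forall x,y,z\in A,
\]
the latter being, by nondegeneracy and~\eqref{eq:cor4}, exactly the anticommutativity of the operation $\prec_{A}$ defined by $\mathcal{B}(x\prec_{A}y,z)=-\mathcal{B}(x,z\circ_{A}y)$.

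To prove the bridge one observes that the second equality in~\eqref{eq:1825} says precisely that $\mathcal{B}(u,w)=\mathcal{B}(w,u)$ whenever $w$ lies in the image of $\circ_{A}$. For the forward implication, I would rewrite the three terms on the right side of~\eqref{eq:left inv1} using this symmetry and the first equality of~\eqref{eq:1825}; the contributions of $-\mathcal{B}(y,z\circ_{A}x)$ and $-\mathcal{B}(x,z\circ_{A}y)$ then cancel, leaving $\mathcal{B}(x\circ_{A}y,z)$, and the displayed relation follows from the same two identities. For the reverse implication, substituting the displayed relation into~\eqref{eq:left inv1} collapses its last two terms and produces the first equality of~\eqref{eq:1825}; a further use of the displayed relation together with this first equality gives the second. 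This step is pure substitution, so I expect no real difficulty here.

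It then remains to assemble the pieces. In the forward direction, \eqref{eq:1825} implies~\eqref{eq:left inv1}, so Corollary~\ref{cor:420} yields a compatible \ctpla $(A,\succ_{A},\prec_{A})$ with $\succ_{A},\prec_{A}$ as in~\eqref{eq:cor3} and~\eqref{eq:cor4} and the claimed equivalences of representations, but stated with $\bullet_{A}$ in place of $\circ_{A}$; the bridge shows $\prec_{A}$ is anticommutative, so the \ctpla is special and, by Corollary~\ref{cor:SDPP}, $\circ_{A}=\bullet_{A}$, which converts those equivalences into the ones in the statement. In the converse direction, a \sctpla is in particular a \ctpla with $\circ_{A}=\bullet_{A}$ (Corollary~\ref{cor:SDPP}), so the hypothesis that $( L_{\circ_{A}}, R_{\circ_{A}},A)$ and $( L^{*}_{\circ_{A}}, R^{*}_{\prec_{A}},A^{*})$ are equivalent is exactly what the converse of Corollary~\ref{cor:420} requires; it produces a nondegenerate $\mathcal{B}$ satisfying~\eqref{eq:left inv1} for which $\prec_{A}$ is recovered via~\eqref{eq:cor4}, and since $\prec_{A}$ is anticommutative the displayed relation holds, so the bridge upgrades~\eqref{eq:left inv1} to~\eqref{eq:1825}. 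The main point to be careful about is this last deduction: one needs that the $\mathcal{B}$ supplied by Corollary~\ref{cor:420} is compatible with the given pre-structure, i.e. that the original $\prec_{A}$ genuinely satisfies~\eqref{eq:cor4} — a fact read off from the proof of Theorem~\ref{thm:3}, where the recovered multiplications are shown to satisfy their defining identities — without which the anticommutativity of $\prec_{A}$ could not be transferred to $\mathcal{B}$.
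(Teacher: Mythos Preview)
Your proposal is correct and follows essentially the same route as the paper: both reduce to Corollary~\ref{cor:420}, show that \eqref{eq:1825} implies \eqref{eq:left inv1}, use \eqref{eq:cor4} together with the form's properties to deduce anticommutativity of $\prec_A$, and then invoke $\bullet_A=\circ_A$ from Corollary~\ref{cor:SDPP} to rewrite the representation equivalences. Your explicit ``bridge'' equivalence and the care you take in the converse about whether the $\mathcal{B}$ produced by Corollary~\ref{cor:420} actually satisfies~\eqref{eq:cor4} (it does, by the proof of Theorem~\ref{thm:3}) are finer than the paper's one-line ``the converse side is obtained similarly,'' but the logical content is the same.
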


\begin{proof}
    It is clear that \eqref{eq:1825} gives rise to \eqref{eq:left inv1}.
    By Corollary \ref{cor:420}, there is a compatible  \ctpla structure $(A,\succ_{A},\prec_{A})$ with  multiplications $\succ_{A},\prec_{A}:A\otimes
    A\rightarrow A$ respectively defined by \eqref{eq:cor3} and \eqref{eq:cor4}.
    Moreover, we have
    $$\mathcal{B}(x\prec_{A}y,z)=-\mathcal{B}(x,z\circ_{A}y)
    =\mathcal{B}(y, z\circ_{A}x)=-\mathcal{B}(y\prec_{A}x,z),\;\forall x,y,z\in A.$$ By
    the nondegeneracy
    of $\mathcal{B}$, we have $
    x\prec_{A}y=-y\prec_{A}x $ for all $x,y\in A$ and
    hence $(A,\succ_{A}$, $\prec_{A})$ is a \sctpla. By
    Corollary \ref{cor:420},
$( L_{\circ_{A}}, R_{\circ_{A}},A)$ and $( L^{*}_{\bullet_{A}}, R^{*}_{\prec_{A}},A^{*})
=( L^{*}_{\circ_{A}},- L^{*}_{\prec_{A}},A^{*})$ are equivalent as representations of $(A,\circ_{A})$, and $( L_{\bullet_{A}}= L_{\circ_{A}},- L_{\succ_{A}},A)$ and  $( L^{*}_{\circ_{A}},- L^{*}_{\circ_{A}}
$
$    - R^{*}_{\circ_{A}},A^{*})$ are equivalent as representations of $(A,\circ_{A})$. The converse side is obtained similarly.
\end{proof}

\begin{defi}
A bilinear form $\calb$ on a Leibniz algebra $(A,\circ_{A})$ is called
{\bf left-invariant}
if
\begin{eqnarray}\label{eq:li}
\mathcal{B}(x\circ_{A}y,z)+\mathcal{B}(y,x\circ_{A}z)=0,\;\forall x,y,z\in A.
\end{eqnarray}
\end{defi}

\begin{lem}\label{lem:transfer111} Suppose that $\mathcal{B}$
    is a symmetric bilinear form on a Leibniz algebra
    $(A,\circ_{A})$. Then $\mathcal{B}$ is left-invariant
    if and only if \eqref{eq:left inv1} holds. Moreover, in this case, \eqref{eq:1825} holds automatically.
\end{lem}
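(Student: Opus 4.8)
The plan is to deduce all three assertions from formal manipulations of the bilinear identities \eqref{eq:li}, \eqref{eq:left inv1}, \eqref{eq:1825}, using only the symmetry of $\mathcal{B}$ (the Leibniz identity plays no role). The basic move, used repeatedly, is to put a product into the first slot of $\mathcal{B}$ by symmetry and then expand it by \eqref{eq:left inv1}. I would first dispose of the easy implication ``left-invariant $\Rightarrow$ \eqref{eq:left inv1}''. Since \eqref{eq:li} gives $\mathcal{B}(x\circ_{A}y,z)=-\mathcal{B}(y,x\circ_{A}z)$, the identity \eqref{eq:left inv1} is equivalent to $\mathcal{B}(y,z\circ_{A}x)+\mathcal{B}(x,z\circ_{A}y)=0$, which follows by applying \eqref{eq:li} with $(x,y,z)$ replaced by $(z,x,y)$ to get $\mathcal{B}(z\circ_{A}x,y)+\mathcal{B}(x,z\circ_{A}y)=0$ and rewriting $\mathcal{B}(z\circ_{A}x,y)=\mathcal{B}(y,z\circ_{A}x)$ by symmetry.

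For the converse ``\eqref{eq:left inv1} $\Rightarrow$ left-invariant'', rewriting \eqref{eq:left inv1} shows that \eqref{eq:li} is equivalent to the identity
\[
\mathcal{B}(y,z\circ_{A}x)+\mathcal{B}(x,z\circ_{A}y)=0,\qquad\forall\,x,y,z\in A,
\]
whose left-hand side I denote $H(x,y,z)$. To prove this, I would first apply \eqref{eq:left inv1} to $\mathcal{B}(y\circ_{A}z,x)$ and to $\mathcal{B}(x\circ_{A}z,y)$ and add the two results; the sum $\mathcal{B}(x,y\circ_{A}z)+\mathcal{B}(y,x\circ_{A}z)$ then reappears on the right-hand side with a minus sign, yielding the auxiliary identity
\[
\mathcal{B}(x,y\circ_{A}z)+\mathcal{B}(y,x\circ_{A}z)=-\mathcal{B}(z,x\circ_{A}y+y\circ_{A}x).
\]

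Next I would apply \eqref{eq:left inv1} to $\mathcal{B}(z\circ_{A}x,y)$ and to $\mathcal{B}(z\circ_{A}y,x)$ and add; after collecting terms, $H$ reappears on the right-hand side and one obtains
\[
2H(x,y,z)=-\bigl(\mathcal{B}(x,y\circ_{A}z)+\mathcal{B}(y,x\circ_{A}z)\bigr)-\mathcal{B}(z,x\circ_{A}y+y\circ_{A}x).
\]
Substituting the auxiliary identity into the right-hand side makes it vanish, so $2H=0$ and hence $H=0$ since $\mathrm{char}\,\mathbb{K}=0$; this is exactly \eqref{eq:li}. Finally, if $\mathcal{B}$ is symmetric and left-invariant, then \eqref{eq:1825} is immediate: its first equality is \eqref{eq:li}, and its second equality is the symmetry of $\mathcal{B}$.

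The only delicate point is the converse direction: one must choose the right permutations of the variables in \eqref{eq:left inv1} and carefully track the six terms produced in each of the two additions, so that the auxiliary identity and the coefficient $2$ come out correctly. Once those two computations are set up, the conclusion is immediate and everything else is routine.
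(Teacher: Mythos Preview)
Your proposal is correct. Both directions and the final remark go through exactly as you describe, and the computations of the auxiliary identity and of $2H(x,y,z)=0$ check out.

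The paper's own argument for the nontrivial implication \eqref{eq:left inv1} $\Rightarrow$ \eqref{eq:li} is organized differently. Instead of expanding two pairs of instances of \eqref{eq:left inv1} and summing, it first rewrites \eqref{eq:left inv1} (using only the symmetry of $\mathcal{B}$) in the form
\[
\mathcal{B}(x\circ_{A}y,z)+\mathcal{B}(x\circ_{A}z,y)
=-\mathcal{B}(z\circ_{A}x,y)-\mathcal{B}(z\circ_{A}y,x),
\]
and then observes that the left-hand side is manifestly symmetric in $y,z$ while the right-hand side is manifestly symmetric in $x,y$. These two symmetry observations immediately yield two further identities, and comparing them forces the common expression to equal its own negative, giving \eqref{eq:li} after division by $2$. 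The endpoint is the same as yours (a relation of the form $2\cdot(\text{something})=0$), but the paper gets there with essentially no term-tracking: the symmetry trick replaces your two explicit additions. Your route is a bit more computational but entirely self-contained and perhaps easier to verify line by line; the paper's route is slicker once one sees the symmetric rewriting. The easy direction and the ``moreover'' clause are dismissed in the paper as obvious, whereas you spell them out.
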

\begin{proof}
Suppose that \eqref{eq:left inv1} holds, which is rewritten as
    \begin{equation}\label{eq:li2}
        \mathcal{B}(x\circ_{A}y,z)+\mathcal{B}(x\circ_{A}z,y)=
        -\mathcal{B}(z\circ_{A}x,y)-\mathcal{B}(z\circ_{A}y,x),\;\forall x,y,z\in A.
    \end{equation}
    Observing the LHS of \eqref{eq:li2} is symmetric in $y$ and $z$, we have
    \begin{equation} \label{eq:li3}
        -\mathcal{B}(z\circ_{A}x,y)-\mathcal{B}(z\circ_{A}y,x)=
        -\mathcal{B}(y\circ_{A}x,z)-\mathcal{B}(y\circ_{A}z,x),\;\forall x,y,z\in A.
    \end{equation}
    Observing the RHS of \eqref{eq:li2} is symmetric in $x$ and $y$, we have
    \begin{equation}\label{eq:li4}
        \mathcal{B}(x\circ_{A}y,z)+\mathcal{B}(x\circ_{A}z,y)=
        \mathcal{B}(y\circ_{A}x,z)+\mathcal{B}(y\circ_{A}z,x),\;\forall x,y,z\in A.
    \end{equation}
    Combining \eqref{eq:li2}-\eqref{eq:li4} together, we obtain \eqref{eq:li}.
    The rest is obvious.
\end{proof}

\begin{cor}\label{cor:quadratic1}
    Let $\mathcal{B}$ be a nondegenerate symmetric left-invariant bilinear form on a Leibniz
    algebra $(A,\circ_{A})$. Then there is a
    compatible \sctpla structure
    $(A,\succ_{A}$, $\prec_{A})$ with multiplications
    $\succ_{A},\prec_{A}:A\otimes A\rightarrow A$
    given by \eqref{eq:cor3} and \eqref{eq:cor4}.
    Moreover,  $( L_{\circ_{A}}, R_{\circ_{A}},A)$ and $(   L^{*}_{\circ_{A}},- L^{*}_{\prec_{A}},A^{*})$ are equivalent as representations of $(A,\circ_{A})$,     and $(  L_{\circ_{A}},- L_{\succ_{A}},A)$ and  $( L^{*}_{\circ_{A}},- L^{*}_{\circ_{A}}
    - R^{*}_{\circ_{A}},A^{*})$ are equivalent as representations of $(A,\circ_{A})$.
\end{cor}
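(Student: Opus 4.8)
The plan is to derive this corollary directly from Lemma~\ref{lem:transfer111} and Corollary~\ref{cor:420-}, the only genuinely new ingredient being a short bookkeeping step that converts $ R^{*}_{\prec_{A}}$ into $- L^{*}_{\prec_{A}}$ once the anticommutativity of $\prec_{A}$ is in hand.

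First I would invoke Lemma~\ref{lem:transfer111}. Since $\mathcal{B}$ is a symmetric left-invariant bilinear form on the Leibniz algebra $(A,\circ_{A})$, that lemma gives both that the identity \eqref{eq:left inv1} holds and, crucially, that \eqref{eq:1825} holds automatically. Thus $\mathcal{B}$ is a nondegenerate bilinear form on $(A,\circ_{A})$ satisfying the hypothesis \eqref{eq:1825} of Corollary~\ref{cor:420-}.

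Next I would apply Corollary~\ref{cor:420-} verbatim. This produces a compatible \sctpla structure $(A,\succ_{A},\prec_{A})$ whose multiplications are exactly those defined by \eqref{eq:cor3} and \eqref{eq:cor4}, together with the two representation equivalences supplied by that corollary: $( L_{\circ_{A}}, R_{\circ_{A}},A)$ is equivalent to $( L^{*}_{\circ_{A}}, R^{*}_{\prec_{A}},A^{*})$, and $( L_{\circ_{A}},- L_{\succ_{A}},A)$ is equivalent to $( L^{*}_{\circ_{A}},- L^{*}_{\circ_{A}}- R^{*}_{\circ_{A}},A^{*})$. The second of these already matches the statement to be proved.

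Finally, to rewrite the first equivalence in the stated form, I would use that $(A,\succ_{A},\prec_{A})$ is special, so $\prec_{A}$ is anticommutative: for all $x,y\in A$ we have $ R_{\prec_{A}}(x)y = y\prec_{A}x = -x\prec_{A}y = - L_{\prec_{A}}(x)y$, hence $ R_{\prec_{A}} = - L_{\prec_{A}}$ and therefore $ R^{*}_{\prec_{A}} = - L^{*}_{\prec_{A}}$. Substituting this into the first equivalence from Corollary~\ref{cor:420-} yields that $( L_{\circ_{A}}, R_{\circ_{A}},A)$ and $( L^{*}_{\circ_{A}},- L^{*}_{\prec_{A}},A^{*})$ are equivalent as representations of $(A,\circ_{A})$, which completes the proof. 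There is essentially no obstacle here; the only point deserving care is checking, through Lemma~\ref{lem:transfer111}, that the symmetric left-invariant hypothesis genuinely forces \eqref{eq:1825}, since that is precisely what allows one to cite Corollary~\ref{cor:420-} (rather than only Corollary~\ref{cor:420}) and thereby obtain a \emph{special} type-$a$ pre-Leibniz algebra.
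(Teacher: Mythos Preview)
Your proof is correct and essentially the same as the paper's. The paper cites Corollary~\ref{cor:420} and Lemma~\ref{lem:transfer111} directly, while you route through Corollary~\ref{cor:420-}; since Corollary~\ref{cor:420-} is itself obtained from Corollary~\ref{cor:420} together with the consequence \eqref{eq:1825} of Lemma~\ref{lem:transfer111}, the two arguments unwind to the same computation, and your final bookkeeping step $R^{*}_{\prec_{A}}=-L^{*}_{\prec_{A}}$ is exactly the identification $\bullet_A=\circ_A$ used implicitly in the paper's short proof.
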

\begin{proof} It follows from Corollary \ref{cor:420} and Lemma \ref{lem:transfer111}.
\end{proof}

In the following, we will introduce some 
constructions of Leibniz algebras with nondegenerate symmetric left-invariant bilinear forms and the induced \sctplas.

\begin{pro}\label{pro:1926}
    Let $(A,[-,-]_{A} )$ be a   Lie algebra with an averaging operator $P:A\rightarrow A$ and the induced Leibniz algebra $(A,\circ_{A})$ be given by ~\eqref{eq:Leibniz from aver op}.
Suppose that $\mathcal{B}$ is a nondegenerate symmetric invariant
bilinear form on $(A,[-,-]_{A} )$.
Then $\mathcal{B}$ is left-invariant on $(A,\circ_{A})$.
Moreover, suppose that $\widehat{P}$ is the adjoint map of $P$ associated to $\mathcal{B}$, that is,
\begin{equation*} \mathcal{B}\big(P(x),y\big)=\mathcal{B}\big(x,\widehat{P}(y)\big),\;\forall x,y\in A.
\end{equation*}
Then there is a
compatible  \sctpla structure $(A,\succ_{A},\prec_{A})$ on
$(A,\circ_A)$ with multiplications $\succ_A,\prec_A:A\otimes A\rightarrow A$ respectively defined by
\begin{equation}\label{eq:90}
    x\succ_{A}y=[P(x),y]_{A}-\widehat{P}([x,y]_{A}),\; x\prec_{A}y=\widehat{P}([x,y]_{A}),\;\forall x,y\in A.
\end{equation}
\end{pro}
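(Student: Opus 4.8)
The plan is to proceed in two stages: first establish that $\mathcal{B}$ is left-invariant on $(A,\circ_A)$, and then invoke the already-established Corollary~\ref{cor:quadratic1} to produce the \sctpla, identifying its multiplications explicitly via \eqref{eq:cor3} and \eqref{eq:cor4}. For the left-invariance, recall $x\circ_A y=[P(x),y]_A$; so we must show $\mathcal{B}([P(x),y]_A,z)+\mathcal{B}(y,[P(x),z]_A)=0$ for all $x,y,z\in A$. This follows immediately from the invariance of $\mathcal{B}$ on the Lie algebra: $\mathcal{B}([P(x),y]_A,z)=\mathcal{B}(P(x),[y,z]_A)=-\mathcal{B}(P(x),[z,y]_A)=-\mathcal{B}([P(x),z]_A,y)$, and then symmetry of $\mathcal{B}$ gives the desired identity. (One should note $\mathcal{B}$ is also automatically symmetric, so Lemma~\ref{lem:transfer111} applies and \eqref{eq:1825} holds as well; this is what makes Corollary~\ref{cor:quadratic1} applicable.)

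Next I would compute the induced multiplications. By Corollary~\ref{cor:quadratic1} (equivalently Corollary~\ref{cor:420-}), the compatible \sctpla has $\succ_A,\prec_A$ characterized by \eqref{eq:cor3} and \eqref{eq:cor4}:
\begin{eqnarray*}
\mathcal{B}(x\succ_A y,z)&=&-\mathcal{B}(y,x\circ_A z+z\circ_A x),\\
\mathcal{B}(x\prec_A y,z)&=&-\mathcal{B}(x,z\circ_A y).
\end{eqnarray*}
The task is to verify that the explicit formulas \eqref{eq:90}, namely $x\succ_A y=[P(x),y]_A-\widehat P([x,y]_A)$ and $x\prec_A y=\widehat P([x,y]_A)$, satisfy these. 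For $\prec_A$: $\mathcal{B}(\widehat P([x,y]_A),z)=\mathcal{B}([x,y]_A,P(z))=\mathcal{B}(x,[y,P(z)]_A)=-\mathcal{B}(x,[P(z),y]_A)=-\mathcal{B}(x,z\circ_A y)$, using the definition of $\widehat P$, the invariance of $\mathcal{B}$, and antisymmetry of the Lie bracket. For $\succ_A$: since $x\succ_A y+x\prec_A y$ must equal $x\circ_A y=[P(x),y]_A$, and the proposed $\succ_A,\prec_A$ in \eqref{eq:90} manifestly sum to $[P(x),y]_A$, it suffices to check $\succ_A$ on its own or simply observe that \eqref{eq:cor3} minus \eqref{eq:cor4} (i.e.\ $\mathcal{B}(x\succ_A y,z)-(-\mathcal{B}(x\prec_A y,z))$... more cleanly: $\mathcal{B}(x\succ_A y,z)=\mathcal{B}(x\circ_A y,z)-\mathcal{B}(x\prec_A y,z)$) is consistent — one verifies $-\mathcal{B}(y,x\circ_A z+z\circ_A x)=\mathcal{B}(x\circ_A y,z)+\mathcal{B}(x,z\circ_A y)$, which is exactly the left-invariance identity \eqref{eq:left inv1}, already known to hold for symmetric left-invariant $\mathcal{B}$ by Lemma~\ref{lem:transfer111}.

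The only genuine content beyond routine bilinear-form manipulation is the verification of left-invariance from the Lie-algebraic invariance, which is short; the rest is bookkeeping to match \eqref{eq:90} against the abstract formulas \eqref{eq:cor3}–\eqref{eq:cor4} coming from the general machinery. I expect no real obstacle: the main thing to be careful about is the bookkeeping with $P$ versus $\widehat P$ (the averaging operator sits on one side of the bracket, while its $\mathcal{B}$-adjoint $\widehat P$ appears in the splitting because moving $P$ across $\mathcal{B}$ produces $\widehat P$), and making sure the anticommutativity of $\prec_A$ — required for a \emph{special} type-$a$ pre-Leibniz algebra — is genuinely inherited; but that is precisely the content of the converse-free direction of Corollary~\ref{cor:420-}, which applies once \eqref{eq:1825} is known, and \eqref{eq:1825} holds automatically here by the last sentence of Lemma~\ref{lem:transfer111}.
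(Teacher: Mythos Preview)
Your proposal is correct and follows essentially the same approach as the paper's proof: verify left-invariance of $\mathcal{B}$ on $(A,\circ_A)$ directly from the Lie-algebra invariance, invoke Corollary~\ref{cor:quadratic1} to obtain the compatible \sctpla, and then identify $\prec_A$ (hence $\succ_A$ by subtraction) via \eqref{eq:cor4} using the defining property of $\widehat P$. The paper's write-up is just slightly terser, computing $\mathcal{B}(x\prec_A y,z)=\mathcal{B}(z\circ_A x,y)=\mathcal{B}([P(z),x]_A,y)=\mathcal{B}(\widehat P([x,y]_A),z)$ in one line.
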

\begin{proof}
   Let $x,y,z\in A$. Then  we have
    \begin{equation*}
        \mathcal{B}(x\circ_{A}y,z)=\mathcal{B}([P(x),y]_{A},z)=
        -\mathcal{B}(y,[P(x),z]_{A})=-\mathcal{B}(y,x\circ_{A}z).
    \end{equation*}
Hence $\mathcal B$ is a
nondegenerate symmetric left-invariant bilinear form on
$(A,\circ_A)$. By Corollary \ref{cor:quadratic1}, there is a
compatible  \sctpla $(A,\succ_{A},\prec_{A})$ of $(A,\circ_{A})$, where
\begin{eqnarray*}
    &&\mathcal{B}(x\prec_{A}y,z)=\mathcal{B}(z\circ_{A}x,y) =\mathcal{B}([P(z),x]_{A},y)=\mathcal{B}\big(\widehat{P}([x,y]_{A}),z\big),\;\forall x,y,z\in A.\label{eq:hatP1}
\end{eqnarray*}
Hence $x\prec_{A}y=\widehat{P}([x,y]_{A})$ and thus
$x\succ_{A}y=x\circ_{A}y-x\prec_{A}y=[P(x),y]_{A}
-\widehat{P}([x,y]_{A})$. 
\end{proof}

\begin{ex}\label{pre-ex:2.3}
    Let $(A,[-,-]_{A})$ be the $3$-dimensional simple Lie algebra
    $\frak s\frak l(2,\mathbb{K})$ with a basis $$\left\{
    x=\left(\begin{matrix} 0&1\cr
        0&0\cr\end{matrix}\right),h=\left(\begin{matrix} 1&0\cr
        0&-1\cr\end{matrix}\right),y=\left(\begin{matrix} 0&0\cr
        1&0\cr\end{matrix}\right)\right\}$$ and with the multiplication
    \begin{equation}
        [h,x]_{A}=2x,\; [h,y]_{A}=-2y,\;[x,y]_{A}=h.
    \end{equation}
    Define a linear map $P:A\rightarrow A$ by
    \begin{equation}\label{eq:ex4}
        P(x)=P(h)=2x+4y+2h,\;P(y)=x+2y+h.
    \end{equation}
    Then $P$ is an averaging operator on $(A,[-,-]_{A})$.
    Thus there is an induced Leibniz algebra $(A,\circ_{A})$   with the following nonzero products:
    \begin{eqnarray*}
&x\circ_{A}x=h\circ_{A}x=4x-4h,
        x\circ_{A}y=h\circ_{A}y=2h-4y,
        x\circ_{A}h=h\circ_{A}h=8y-4x, &\\
&       y\circ_{A}x=2x-2h,\;y\circ_{A}y=h-2y,\;y\circ_{A}h=4y-2x.&
    \end{eqnarray*}
    Define a bilinear form $\mathcal{B}$ on $A$ by
    \begin{eqnarray}
        \mathcal{B}(a,b)=\mathrm{Tr}(ab),\;\forall a,b\in A.
    \end{eqnarray}
    Then $\mathcal{B}$ is a nondegenerate symmetric invariant bilinear form on $(A,[-,-]_{A})$ whose nonzero values are given by
    \begin{equation}\label{eq:10.0}
        \mathcal{B}(x,y)=1,\; \mathcal{B}(h,h)=2.
    \end{equation}
    Moreover, $\mathcal{B}$ is left-invariant on $(A,\circ_{A})$.
    Note that
        \begin{equation*}
    \widehat{P} (x)=\widehat{P}(h)=2x+4y+2h,\;\widehat{P}(y)=x+2y+h,
    \end{equation*}
which coincides with $P$. Therefore there exists a compatible
\sctpla structure $(A,\succ_{A},\prec_{A})$ of $(A,\circ_{A})$
with $\succ_{A},\prec_{A}$ defined by \eqref{eq:90}, whose nonzero
products are
\begin{align*}
    &x\succ_{A}x=4x-4h,&\;&x\succ_{A}y=-2x-8y,&\;
    &x\succ_{A}h=16y+4h,\\
    &y\succ_{A}x=4x+4y,&\;&y\succ_{A}y=h-2y,&\;&y\succ_{A}h=-4x-2h,\\
    &h\succ_{A}x=-8y-8h,&\;&h\succ_{A}y=2x+4h,&\;&h\succ_{A}h=8y-4x,\\
    &x\prec_{A}y=2x+4y+2h,&\;&
    x\prec_{A}h=-4x-8y-4h,&\;&y\prec_{A}h=2x+4y+2h.
\end{align*}
\end{ex}

Now we introduce the notion of representations of \sctplas.

\begin{defi}
    Let $(A,\succ_{A},\prec_{A})$ be a \sctpla and $(A,\circ_{A})$ be
    the sub-adjacent Leibniz algebra. Let $V$ be a vector space and
    $l_{\succ_{A}},r_{\succ_{A}},l_{\prec_{A}}:A\rightarrow\mathrm{End}_{\mathbb
        K}(V)$ be linear maps. Set
    \begin{equation}\label{eq:sum linear}
        l_{\circ_{A}}:=l_{\succ_{A}}+l_{\prec_{A}},\;\;
        r_{\circ_{A}}:=r_{\succ_{A}}-l_{\prec_{A}}.
    \end{equation}
    If $(l_{\circ_{A}},r_{\circ_{A}},V)$ is a representation of $(A,\circ_{A})$ and the following equations hold:
        \begin{eqnarray}
            &&l_{\circ_{A}}(x)l_{\prec_{A}}(y)v
            =l_{\prec_{A}}(x\circ_{A}y)v
            +l_{\prec_{A}}(y)l_{\circ_{A}}(x)v
            =l_{\prec_{A}}(x)l_{\prec_{A}}(y)v,\label{eq:sdpp rep1}\\
            &&r_{\circ_{A}}(x\prec_{A} y)v=
            l_{\prec_{A}}(x)r_{\circ_{A}}(y)v-l_{\prec_{A}}(y)r_{\circ_{A}}(x)v
            =-l_{\prec_{A}}(x\prec_{A}y)v,\;\forall x,y\in A, v\in V,
    \end{eqnarray}then we say that  $(l_{\succ_{A}},r_{\succ_{A}},l_{\prec_{A}},V)$ is a {\bf representation} of $(A,\succ_{A},\prec_{A})$.
\end{defi}

\begin{pro}\label{pro:rep}
    Let $(A,\succ_{A},\prec_{A})$ be a \sctpla and $(A,\circ_{A})$ be
    the sub-adjacent Leibniz algebra. If
    $(l_{\succ_{A}},r_{\succ_{A}},l_{\prec_{A}},V)$ is a
    representation of $(A,\succ_{A},\prec_{A})$, then
    $$(l^{*}_{\circ_{A}}+r^{*}_{\circ_{A}},-r^{*}_{\succ_{A}},
    l^{*}_{\prec_{A}}-r^{*}_{\succ_{A}},V^{*})
    =(l^{*}_{\succ_{A}}+r^{*}_{\succ_{A}},
    -r^{*}_{\succ_{A}},-r^{*}_{\circ_{A}},V^{*})$$
    is also a representation of $(A,\succ_{A},\prec_{A})$.
    In particular, $( L^{*}_{\succ_{A}}+ R^{*}_{\succ_{A}},
- R^{*}_{\succ_{A}},- R^{*}_{\circ_{A}},A^{*})$ is a representation of $(A,\succ_{A},\prec_{A})$, which is called the {\bf coadjoint representation} of $(A,\succ_{A},$
$\prec_{A})$.
\end{pro}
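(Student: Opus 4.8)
The plan is to verify directly that the proposed triple of linear maps satisfies the four defining equations of a representation of the \sctpla $(A,\succ_{A},\prec_{A})$, namely: (i) the underlying triple $(l_{\circ_A},r_{\circ_A},V)$ part is a representation of the sub-adjacent Leibniz algebra $(A,\circ_A)$; and (ii) the two compatibility equations \eqref{eq:sdpp rep1} and its companion. First I would set up the notation: write $\tilde l_{\succ_A}:=l^{*}_{\succ_A}+r^{*}_{\succ_A}$, $\tilde r_{\succ_A}:=-r^{*}_{\succ_A}$, $\tilde l_{\prec_A}:=-r^{*}_{\circ_A}=-r^{*}_{\succ_A}+l^{*}_{\prec_A}$ (using $r_{\circ_A}=r_{\succ_A}-l_{\prec_A}$ from \eqref{eq:sum linear}). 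Then compute the ``sub-adjacent'' maps of the new representation: $\tilde l_{\circ_A}:=\tilde l_{\succ_A}+\tilde l_{\prec_A}=l^{*}_{\succ_A}+l^{*}_{\prec_A}=l^{*}_{\circ_A}$ and $\tilde r_{\circ_A}:=\tilde r_{\succ_A}-\tilde l_{\prec_A}=-r^{*}_{\succ_A}+r^{*}_{\circ_A}-l^{*}_{\prec_A}\cdot 0$... more carefully $\tilde r_{\circ_A}=-r^{*}_{\succ_A}-(-r^{*}_{\circ_A})=r^{*}_{\circ_A}-r^{*}_{\succ_A}=-l^{*}_{\prec_A}$; I would double-check this against the claim that the ``first two slots plus sum'' should give a Leibniz representation equivalent to a dual of $(l_{\circ_A},r_{\circ_A},V)$.

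The key observation driving the proof is that by Corollary~\ref{cor:SDPP}, since $(A,\succ_A,\prec_A)$ is a \sctpla, both $(L^{*}_{\circ_A},R^{*}_{\prec_A},A^{*})$ and $(L_{\circ_A},-L_{\succ_A},A)$ are representations of $(A,\circ_A)$; more relevantly, Lemma~\ref{lem:1518} tells us that dualizing-and-twisting by $L$ sends a Leibniz representation $(l_{\circ_A},r_{\circ_A},V)$ to the representation $(l^{*}_{\circ_A},-l^{*}_{\circ_A}-r^{*}_{\circ_A},V^{*})$. Since the new sub-adjacent pair is $(l^{*}_{\circ_A},-l^{*}_{\prec_A})$ and $-l^{*}_{\prec_A}=-l^{*}_{\circ_A}+l^{*}_{\succ_A}$, I would want to show this equals $(l^{*}_{\circ_A},-l^{*}_{\circ_A}-r^{*}_{\circ_A})$, i.e. that $-r^{*}_{\circ_A}=l^{*}_{\succ_A}$, equivalently $r_{\circ_A}=-l_{\succ_A}$. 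But $r_{\circ_A}=r_{\succ_A}-l_{\prec_A}$, so this amounts to an identity $r_{\succ_A}-l_{\prec_A}=-l_{\succ_A}$ that must follow from the \sctpla representation axioms — most likely by a short manipulation of the given equations (analogous to how, in Corollary~\ref{cor:SDPP}, $(L_{\circ_A},-L_{\succ_A},A)$ appears as a representation, suggesting $R_{\circ_A}=-L_{\succ_A}$ holds at the algebra level, hence at the representation level too). Establishing this single linear identity among $l_{\succ_A},r_{\succ_A},l_{\prec_A}$ is what I expect to be the crux; once it is in hand, the Leibniz-representation part of the claim follows immediately from Lemma~\ref{lem:1518}.

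For the remaining two compatibility equations, the plan is a direct substitution: insert the new maps $\tilde l_{\succ_A},\tilde r_{\succ_A},\tilde l_{\prec_A}$ into \eqref{eq:sdpp rep1} and its companion, expand everything in terms of the duals $l^{*}_{\succ_A},r^{*}_{\succ_A},l^{*}_{\prec_A}$, and then recognize each resulting identity as the dual (i.e., apply $\langle -,v\rangle$ and move maps across via \eqref{eq:dual}) of one of the original defining equations of the representation $(l_{\succ_A},r_{\succ_A},l_{\prec_A},V)$, possibly combined with the Leibniz representation equations \eqref{eq:rep1}--\eqref{eq:rep3} already available for $(l_{\circ_A},r_{\circ_A},V)$. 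Concretely, each equation in the \sctpla representation definition becomes, upon dualizing, an equation with the order of composition reversed and a sign on each map; matching these to the target equations for the twisted triple is bookkeeping but needs care with signs since $l_{\prec_A}$ enters $r_{\circ_A}$ with a minus. The main obstacle will be organizing this sign-tracking cleanly; I would do it one equation at a time, stating at each step which original axiom is being dualized. Finally, specializing $V=A$, $l_{\succ_A}=L_{\succ_A}$, $r_{\succ_A}=R_{\succ_A}$, $l_{\prec_A}=L_{\prec_A}$ (so that $R_{\circ_A}=R_{\succ_A}-L_{\prec_A}$) yields the stated coadjoint representation $(L^{*}_{\succ_A}+R^{*}_{\succ_A},-R^{*}_{\succ_A},-R^{*}_{\circ_A},A^{*})$, completing the proof.
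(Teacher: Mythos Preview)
Your computation of the new sub-adjacent pair $(\tilde l_{\circ_A},\tilde r_{\circ_A})=(l^*_{\circ_A},-l^*_{\prec_A})$ is correct, and your plan for the two \sctpla compatibility equations (direct dualization and bookkeeping) is sound and matches the paper. However, there is a genuine gap in how you propose to establish that $(l^*_{\circ_A},-l^*_{\prec_A},V^*)$ is a Leibniz representation.

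You try to identify $(l^*_{\circ_A},-l^*_{\prec_A})$ with the output $(l^*_{\circ_A},-l^*_{\circ_A}-r^*_{\circ_A})$ of Lemma~\ref{lem:1518}, which would require the linear identity $l_{\prec_A}=l_{\circ_A}+r_{\circ_A}$, equivalently $r_{\circ_A}=-l_{\succ_A}$. This identity is \emph{false}: it is not a consequence of the \sctpla representation axioms, and it already fails for the adjoint representation. Indeed, $R_{\circ_A}=-L_{\succ_A}$ would force $y\circ_A x=-x\succ_A y$ for all $x,y\in A$, which is not a relation in a \sctpla; in Example~\ref{pre-ex:2.3} one has $x\circ_A x=4x-4h$ while $-x\succ_A x=-4x+4h$. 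The appearance of $(L_{\circ_A},-L_{\succ_A},A)$ in Corollary~\ref{cor:SDPP} only says that this particular pair of maps satisfies \eqref{eq:rep1}--\eqref{eq:rep3}; it does not assert that $-L_{\succ_A}$ equals $R_{\circ_A}$.

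The paper's route bypasses Lemma~\ref{lem:1518} entirely and verifies \eqref{eq:rep1}--\eqref{eq:rep3} for $(l^*_{\circ_A},-l^*_{\prec_A},V^*)$ directly. Axiom~\eqref{eq:rep1} is immediate from its counterpart for $l_{\circ_A}$. The key observation is that \eqref{eq:rep2} and \eqref{eq:rep3} for the pair $(l^*_{\circ_A},-l^*_{\prec_A})$ are exactly the duals of the two equalities packed into \eqref{eq:sdpp rep1}: pairing with $v\in V$ and unwinding via \eqref{eq:dual}, the four expressions $l^*_{\prec_A}(x\circ_A y)$, $l^*_{\circ_A}(x)l^*_{\prec_A}(y)$, $l^*_{\prec_A}(y)l^*_{\circ_A}(x)$, $l^*_{\prec_A}(y)l^*_{\prec_A}(x)$ become the four terms of \eqref{eq:sdpp rep1} in the required combination. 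So it is the \sctpla axiom \eqref{eq:sdpp rep1}, not a hypothetical linear identity among $l_{\succ_A},r_{\succ_A},l_{\prec_A}$, that supplies the Leibniz-representation property of the dual triple. Once you make this correction, the rest of your plan (dualizing the remaining axioms and specializing to $V=A$) goes through as you describe.
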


\begin{proof}
    It is clear that $l^{*}_{\circ_{A}}$ satisfies \eqref{eq:rep1}.
    Let $x,y\in A,u^{*}\in V^{*},v\in V$. Then we have
        \begin{eqnarray*}
            &&\langle -l^{*}_{\prec_{A}}(x\circ_{A}y)u^{*},v\rangle =\langle u^{*
            },l _{\prec_{A}}(x\circ_{A}y)v\rangle ,\;
            \langle-l_{\circ_{A}}^{* }(x)l_{\prec_{A}}^{* }(y)u^{*
            },v\rangle=-\langle u^{* },l_{\prec_{A}}(y) l_{\circ_{A}}(x)v\rangle, \\
            &&\langle-l_{\prec_{A}}^{* }(y)l_{\circ_{A}}^{* }(x)u^{* },v\rangle
            =-\langle u^{* },l_{\circ_{A}}(x)l_{\prec_{A}}(y)v\rangle,\;
            \langle-l_{\prec_{A}}^{* }(y)l_{\prec_{A}}^{* }(x)u^{* },v\rangle=-\langle u^{*},l_{\prec_{A}}(x)
            l_{\prec_{A}}(y)v\rangle.
    \end{eqnarray*}Hence by \eqref{eq:sdpp rep1}, we have
        \begin{equation*}
            -l^{*}_{\prec_{A}}(x\circ_{A}y)u^{*}=
            -l_{\circ_{A}}^{* }(x)l_{\prec_{A}}^{* }(y)u^{*}
            +l_{\prec_{A}}^{* }(y)l_{\circ_{A}}^{* }(x)u^{*},\;
            -l_{\prec_{A}}^{* }(y)l_{\circ_{A}}^{* }(x)u^{*}=
            -l_{\prec_{A}}^{* }(y)l_{\prec_{A}}^{*}(x)u^{*}.
    \end{equation*}
    Thus
$$ (l^{*}_{\circ_{A}},-l^{*}_{\prec_{A}},V^{*})
    =(l^{*}_{\succ_{A}}+r^{*}_{\succ_{A}}
    -r^{*}_{\circ_{A}},-r^{*}_{\succ_{A}}-l^{*}_{\prec_{A}}
    +r^{*}_{\succ_{A}},V^{*})$$
    is a representation of $(A,\circ_{A})$.
    Similarly we have
    \begin{eqnarray*}
        &&-l_{\circ_{A}}^{* }(x)r_{\circ_{A}}^{* }(y)u^{* }=-r_{\circ_{A}}^{* }(x\circ_{A}y)u^{*}
        -r_{\circ_{A}}^{* }(y)l_{\circ_{A}}^{* }(x)u^{* }=r_{\circ_{A}}^{* }(x)r_{\circ_{A}}^{* }(y)u^{* }, \\
        &&-l_{\prec_{A}}^{*}(x\prec_{A}y)u^{*}=
        r_{\circ_{A}}^{* }(x)l_{\prec_{A}}^{* }(y)u^{* }-r_{\circ_{A}}^{* }(y)l_{\prec_{A}}^{* }(x)u^{* }=r_{\circ_{A}}^{* }(x\prec_{A}y)u^{*}.
    \end{eqnarray*}
    Thus $(l^{*}_{\succ_{A}}+r^{*}_{\succ_{A}},
    -r^{*}_{\succ_{A}},-r^{*}_{\circ_{A}},V^{*})$ is a representation of $(A,\succ_{A},\prec_{A})$.
\end{proof}

\begin{pro}\label{pro:336}
    Let $(A,\succ_{A},\prec_{A})$ be a \sctpla and $(A,\circ_{A})$ be
    the sub-adjacent Leibniz algebra. Then  $( L_{\succ_{A}}, R_{\succ_{A}},
     L_{\prec_{A}},A)$
    and
    $( L^{*}_{\succ_{A}}+ R^{*}_{\succ_{A}} ,- R^{*}_{\succ_{A}},- R^{*}_{\circ_{A}},$
$A^{*})$
    are equivalent as representations of $(A,\succ_{A},\prec_{A})$
    if and only if
    there is a nondegenerate bilinear form $\mathcal{B}$ on $A$ satisfying \eqref{eq:cor3},  \eqref{eq:cor4} and
    \begin{equation}\label{eq:cor3.37}
        \mathcal{B}(x\succ_{A}y,z)=\mathcal{B}(x,z\succ_{A}y),\;\forall x,y,z\in A.
    \end{equation}
\end{pro}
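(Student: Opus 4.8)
The plan is to specialise the familiar correspondence ``nondegenerate bilinear form $\leftrightarrow$ equivalence between the adjoint and coadjoint representations'' to \sctplas, exactly in the style of the proof of Theorem~\ref{thm:3}, and to read off each intertwining relation as a bilinear-form identity. By Proposition~\ref{pro:rep} the coadjoint representation of $(A,\succ_{A},\prec_{A})$ is $( L^{*}_{\succ_{A}}+ R^{*}_{\succ_{A}},- R^{*}_{\succ_{A}},- R^{*}_{\circ_{A}},A^{*})$, so an equivalence of it with the adjoint representation $( L_{\succ_{A}}, R_{\succ_{A}}, L_{\prec_{A}},A)$ — in the sense analogous to \eqref{eq:eq Leibniz rep} — is a linear isomorphism $\phi\colon A\to A^{*}$ with
\[
\phi L_{\succ_{A}}(x)=( L^{*}_{\succ_{A}}+ R^{*}_{\succ_{A}})(x)\phi,\qquad \phi R_{\succ_{A}}(x)=- R^{*}_{\succ_{A}}(x)\phi,\qquad \phi L_{\prec_{A}}(x)=- R^{*}_{\circ_{A}}(x)\phi,\qquad\forall x\in A.
\]
Given a nondegenerate $\mathcal{B}$ I would take $\phi:=\mathcal{B}^{\natural}$; given such a $\phi$ I would set $\mathcal{B}(x,y):=\langle\phi(x),y\rangle$, so that $\mathcal{B}^{\natural}=\phi$ and nondegeneracy of $\mathcal{B}$ is exactly invertibility of $\phi$. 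Thus the proposition reduces to showing that, for $\phi=\mathcal{B}^{\natural}$, the three displayed intertwining relations hold if and only if $\mathcal{B}$ satisfies \eqref{eq:cor3}, \eqref{eq:cor4} and \eqref{eq:cor3.37}.

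Next I would carry out the translation: pair each intertwining relation with an arbitrary $z\in A$, apply the defining identity \eqref{eq:dual} for dual maps, and use $\langle\mathcal{B}^{\natural}(u),v\rangle=\mathcal{B}(u,v)$. The relation for $L_{\succ_{A}}$ becomes $\mathcal{B}(x\succ_{A}y,z)=-\mathcal{B}(y,x\succ_{A}z+z\succ_{A}x)$; since $\prec_{A}$ is anticommutative in a \sctpla one has $x\succ_{A}z+z\succ_{A}x=x\circ_{A}z+z\circ_{A}x$, so this is precisely \eqref{eq:cor3}. The relation for $R_{\succ_{A}}$ becomes $\mathcal{B}(y\succ_{A}x,z)=\mathcal{B}(y,z\succ_{A}x)$, i.e. \eqref{eq:cor3.37} after renaming. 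The relation for $L_{\prec_{A}}$ becomes $\mathcal{B}(x\prec_{A}y,z)=\mathcal{B}(y,z\circ_{A}x)$.

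Finally I would observe that this last identity is equivalent to \eqref{eq:cor4}: interchanging $x$ and $y$ and using $y\prec_{A}x=-x\prec_{A}y$ turns $\mathcal{B}(x\prec_{A}y,z)=\mathcal{B}(y,z\circ_{A}x)$ into $-\mathcal{B}(x\prec_{A}y,z)=\mathcal{B}(x,z\circ_{A}y)$, which is \eqref{eq:cor4}, and every step reverses. Assembling the three translations then yields both implications simultaneously. I do not expect a genuine obstacle here: the argument is bookkeeping with the sign convention \eqref{eq:dual}, and the one structural input is the anticommutativity of $\prec_{A}$, which is exactly what lets one replace $x\succ_{A}z+z\succ_{A}x$ by $x\circ_{A}z+z\circ_{A}x$ and pass between $\mathcal{B}(x\prec_{A}y,z)=\mathcal{B}(y,z\circ_{A}x)$ and \eqref{eq:cor4}. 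The only subtlety worth pinning down at the outset is fixing which slot of $\mathcal{B}$ is identified with $\phi$, so that all the signs come out consistently.
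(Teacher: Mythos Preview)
Your proposal is correct and follows essentially the same approach as the paper: set $\phi=\mathcal{B}^{\natural}$, write down the three intertwining relations for $L_{\succ_A},R_{\succ_A},L_{\prec_A}$, and translate them via \eqref{eq:dual} into \eqref{eq:cor3}, \eqref{eq:cor3.37}, \eqref{eq:cor4}, using anticommutativity of $\prec_A$ to pass between $x\succ_A z+z\succ_A x$ and $x\circ_A z+z\circ_A x$ and between the two forms of the $\prec_A$-identity. The paper's proof is terser (it simply asserts the equivalence of the displayed operator equations with the three bilinear-form conditions), but the content is identical.
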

\begin{proof}
    Let $\phi:A\rightarrow A^{*}$ be a linear map and $\mathcal{B}$ be a bilinear form on $A$ given by $\mathcal{B}^{\natural}=\phi$.
    Then $\phi$ is bijective if and only if $\mathcal{B}$ is nondegenerate.
    Moreover, the following equations hold
        \begin{eqnarray*}
            &&\phi\big( L_{\succ_{A}}(x)y\big) =( L^{*}_{\circ_{A}}+ R^{*}_{\circ_{A}})(x)\phi(y) =( L^{*}_{\succ_{A}}+ R^{*}_{\succ_{A}})(x)\phi(y),\\
&&\phi\big( R_{\succ_{A}}(y)x\big)
            =- R^{*}_{\succ_{A}}(y)\phi(x),\;
-\phi\big( L_{\prec_{A}}(x)y\big)
            =\phi\big( L_{\prec_{A}}(y)x\big)
            =- R^{*}_{\circ_{A}}(y)\phi(x),\;\forall x,y\in A
    \end{eqnarray*}if and only if  \eqref{eq:cor3}, \eqref{eq:cor4} and
    \eqref{eq:cor3.37} are satisfied.
    Hence the conclusion follows.
\end{proof}

\begin{lem}\label{lem:337}
    Let $(A,\succ_{A},\prec_{A})$ be a \sctpla and $(A,\circ_{A})$ be
    the sub-adjacent Leibniz algebra. Suppose that $\mathcal{B}$ is a symmetric bilinear form on $A$ satisfying \eqref{eq:cor4}.
    Then \eqref{eq:li}, \eqref{eq:cor3} and
    \eqref{eq:cor3.37} hold.
\end{lem}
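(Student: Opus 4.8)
The plan is to derive all three identities from \eqref{eq:cor4} alone, together with the symmetry of $\mathcal{B}$ and two structural facts about a \sctpla: that $\prec_{A}$ is anticommutative, and that $x\succ_{A}y=x\circ_{A}y-x\prec_{A}y$ since $\circ_{A}=\succ_{A}+\prec_{A}$. Notably, I do not expect to need the Leibniz identity or \eqref{eq:SDPP} at any point.

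The first and key step is to produce a companion form of \eqref{eq:cor4}: using $x\prec_{A}y=-y\prec_{A}x$ and then applying \eqref{eq:cor4} to $\mathcal{B}(y\prec_{A}x,z)$ gives
\[
\mathcal{B}(x\prec_{A}y,z)=\mathcal{B}(y,z\circ_{A}x),\qquad\forall x,y,z\in A.
\]
Comparing this with \eqref{eq:cor4} itself and invoking the symmetry of $\mathcal{B}$ (after relabeling the variables) yields
\[
\mathcal{B}(x\circ_{A}y,z)+\mathcal{B}(x\circ_{A}z,y)=0,\qquad\forall x,y,z\in A,
\]
so that $\mathcal{B}(x\circ_{A}-,-)$ is antisymmetric in its last two arguments. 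This is the only place where the hypotheses genuinely combine, and I regard it as the heart of the argument; the remaining deductions are purely formal bookkeeping.

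From this antisymmetry and the symmetry of $\mathcal{B}$, identity \eqref{eq:li} is immediate: $\mathcal{B}(x\circ_{A}y,z)=-\mathcal{B}(x\circ_{A}z,y)=-\mathcal{B}(y,x\circ_{A}z)$. For \eqref{eq:cor3} I would expand $\mathcal{B}(x\succ_{A}y,z)=\mathcal{B}(x\circ_{A}y,z)-\mathcal{B}(x\prec_{A}y,z)$, substitute \eqref{eq:li} into the first summand and the companion form of \eqref{eq:cor4} displayed above into the second, and collect terms to obtain $-\mathcal{B}(y,x\circ_{A}z+z\circ_{A}x)$. Finally, for \eqref{eq:cor3.37} I would apply \eqref{eq:cor3} to the right-hand side with the roles of $x$ and $z$ exchanged, together with the symmetry of $\mathcal{B}$, so that $\mathcal{B}(x,z\succ_{A}y)=\mathcal{B}(z\succ_{A}y,x)=-\mathcal{B}(y,z\circ_{A}x+x\circ_{A}z)=\mathcal{B}(x\succ_{A}y,z)$. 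This completes the plan.
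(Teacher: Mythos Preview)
Your proposal is correct and follows essentially the same route as the paper's proof. The only cosmetic difference is organizational: you isolate the ``companion form'' $\mathcal{B}(x\prec_{A}y,z)=\mathcal{B}(y,z\circ_{A}x)$ as an explicit intermediate step, whereas the paper folds the anticommutativity of $\prec_{A}$ directly into the computation of \eqref{eq:li} by writing $\mathcal{B}(z,x\circ_{A}y)+\mathcal{B}(y,x\circ_{A}z)=\mathcal{B}(-z\prec_{A}y-y\prec_{A}z,x)=0$; the derivations of \eqref{eq:cor3} and \eqref{eq:cor3.37} are then identical in spirit (expand $\succ_{A}=\circ_{A}-\prec_{A}$, apply the previously established identities, and observe the $x\leftrightarrow z$ symmetry of the right-hand side of \eqref{eq:cor3}).
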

\begin{proof}
    Let $x,y,z\in A$. Then we have
        \begin{eqnarray*}
            \mathcal{B}(x\circ_{A}y,z)+\mathcal{B}(y,x\circ_{A}z)
            =\mathcal{B}(z,x\circ_{A}y)+\mathcal{B}(y,x\circ_{A}z)
            \overset{\eqref{eq:cor4}}{=}
            \mathcal{B}(-z\prec_{A}y-y\prec_{A}z,x)=0,
    \end{eqnarray*}that is, \eqref{eq:li} holds. Moreover, we have
    \begin{eqnarray*}
        &&\mathcal{B}(x\succ_{A} y,z)=\mathcal{B}(x\circ_{A}y-x\prec_{A}y,z)
        =\mathcal{B}(x\circ_{A}y,z)+\mathcal{B}(x,z\circ_{A}y)\\
        &&=\mathcal{B}(x\circ_{A}y,z)+\mathcal{B}(z\circ_{A}y,x)
        \overset{\eqref{eq:li}}{=}-\mathcal{B}(y,x\circ_{A}z+z\circ_{A}x),
    \end{eqnarray*}
    that is, \eqref{eq:cor3} holds. Note that
    $\mathcal{B}(y,x\circ_{A}z+z\circ_{A}x)$ is symmetric in $x$ and
    $z$. Then we have
    \begin{eqnarray*}
        \mathcal{B}(x\succ_{A} y,z)=\mathcal{B}(z\succ_{A} y,x)=\mathcal{B}(x,z\succ_{A}y),
    \end{eqnarray*}
    that is, \eqref{eq:cor3.37} holds.
\end{proof}

Now we are motivated to give the following notion.

\begin{defi}
    A (symmetric) bilinear form $\mathcal{B}$ on a \sctpla
    $(A,\succ_{A}$,
    $\prec_{A})$ is called
    {\bf invariant} if \eqref{eq:cor4} holds. A {\bf quadratic \sctpla}
    $(A,\succ_{A}$, $\prec_{A}$, $\mathcal{B})$ is a \sctpla $(A,\succ_{A},\prec_{A})$ together with a nondegenerate symmetric invariant bilinear form $\mathcal{B}$.
\end{defi}

\begin{pro}\label{pro:330}
    Let $\mathcal{B}$ be a nondegenerate symmetric left-invariant
    bilinear form on a Leibniz algebra $(A,\circ_{A})$. Define
    multiplications $\succ_{A},\prec_{A}:A\otimes A\rightarrow A$
    respectively by \eqref{eq:cor3} and \eqref{eq:cor4}.
    Then $(A,\succ_{A},\prec_{A},\mathcal{B})$ is a quadratic
    \sctpla. Conversely, let $(A,\succ_{A},\prec_{A},$
    $\mathcal{B})$ be
    a quadratic \sctpla. Then $\mathcal B$ is left-invariant on the
    sub-adjacent Leibniz algebra $(A,\circ_A)$. Hence there is a
    one-to-one correspondence between Leibniz algebras with nondegenerate
    symmetric left-invariant bilinear forms and quadratic \sctplas.
\end{pro}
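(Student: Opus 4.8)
The plan is to derive both directions from results already established---chiefly Corollary \ref{cor:quadratic1}, Lemma \ref{lem:337}, and the nondegeneracy of $\mathcal B$---and then to check that the two passages are mutually inverse.

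For the forward direction, given a Leibniz algebra $(A,\circ_A)$ with a nondegenerate symmetric left-invariant bilinear form $\mathcal B$, I would invoke Corollary \ref{cor:quadratic1}, which already produces a compatible \sctpla structure $(A,\succ_A,\prec_A)$ with multiplications given by \eqref{eq:cor3} and \eqref{eq:cor4}; compatibility means $x\succ_A y+x\prec_A y=x\circ_A y$, so $(A,\circ_A)$ is the sub-adjacent Leibniz algebra. It then remains only to observe that $\mathcal B$ is a nondegenerate symmetric invariant bilinear form on this \sctpla, that is, that the invariance relation \eqref{eq:cor4} holds; but \eqref{eq:cor4} is precisely the relation defining $\prec_A$, so it holds automatically. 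Hence $(A,\succ_A,\prec_A,\mathcal B)$ is a quadratic \sctpla.

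For the converse, I would take a quadratic \sctpla $(A,\succ_A,\prec_A,\mathcal B)$ with sub-adjacent Leibniz algebra $(A,\circ_A)$, so that $\circ_A=\succ_A+\prec_A$. By definition $\mathcal B$ is symmetric, nondegenerate, and satisfies \eqref{eq:cor4}, so Lemma \ref{lem:337} applies and yields \eqref{eq:li}, \eqref{eq:cor3} and \eqref{eq:cor3.37}; in particular \eqref{eq:li} is exactly the statement that $\mathcal B$ is left-invariant on $(A,\circ_A)$.

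To obtain the one-to-one correspondence, I would verify that the two constructions invert one another. Passing from $(A,\circ_A,\mathcal B)$ to the associated quadratic \sctpla and back, compatibility gives $\succ_A+\prec_A=\circ_A$ and the bilinear form is left untouched, so we recover the original data. Passing from a quadratic \sctpla $(A,\succ_A,\prec_A,\mathcal B)$ to $(A,\circ_A,\mathcal B)$ and reapplying the forward construction yields multiplications $\succ_A',\prec_A'$ defined by \eqref{eq:cor3} and \eqref{eq:cor4}; since $\mathcal B$ already satisfies \eqref{eq:cor4} by invariance and, by Lemma \ref{lem:337}, $\succ_A$ already satisfies \eqref{eq:cor3}, nondegeneracy of $\mathcal B$ forces $\prec_A'=\prec_A$ and $\succ_A'=\succ_A$. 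I do not expect a genuine obstacle here; the only point needing a little care is confirming that the hypotheses of Lemma \ref{lem:337}---symmetry of $\mathcal B$ together with \eqref{eq:cor4}---hold for a quadratic \sctpla, which is immediate from the definition, and then using nondegeneracy to convert the bilinear-form identities into identities of the multiplications themselves.
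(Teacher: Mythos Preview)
Your proposal is correct and follows essentially the same approach as the paper, which simply cites Corollary~\ref{cor:quadratic1} and Lemma~\ref{lem:337}. You have spelled out the details of the bijection more fully than the paper does, but the core reasoning is the same.
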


\begin{proof}
It follows from  Corollary \ref{cor:quadratic1} and Lemma    \ref{lem:337}.
\end{proof}

\subsection{From admissible averaging Lie algebras to \sctplas}\

\begin{defi}
Let $(A,[-,-]_{A},P)$ be an averaging Lie algebra. Suppose that
$(\rho,V)$ is a representation of $(A,[-,-]_{A})$.
If there is a linear map $\alpha:V\rightarrow V$ such that
\begin{equation}\label{eq:rep ao}
    \rho(Px)\alpha(v)=\alpha\big(\rho(Px)v\big)=\alpha\big(\rho(x)\alpha(v)\big),\;\forall x\in A, v\in V,
\end{equation}
then we say that $(\rho,\alpha,V)$ is a {\bf representation} of $(A,[-,-]_{A},P)$.
Two representations $(\rho ,\alpha ,V )$ and $(\rho',\alpha',V')$ of $(A,[-,-]_{A},P)$ are called {\bf equivalent} if there exists a linear isomorphism 
$\phi:V\rightarrow V'$ such that
\begin{equation*}
    \phi\big(\rho(x)v\big)=\rho'(x)\phi(v),\; \phi\alpha(v)=\alpha'\phi(v),\;\forall x\in A, v\in V.
\end{equation*}
\end{defi}

\begin{ex}
Let $(A,[-,-]_{A},P)$ be an averaging Lie algebra.
Then  $(\mathrm{ad}_{A},P,A)$ is a representation of $(A,[-,-]_{A},P)$, which is called the {\bf adjoint representation} of $(A,[-,-]_{A},P)$.
\end{ex}

For vector spaces $V$ and $V'$ and linear maps
$\phi:V\rightarrow V$ and $\phi':V'\rightarrow V'$, define
\begin{equation}\label{eq:pro:SD RB Lie2}
\phi+\phi':V\oplus V'\rightarrow V\oplus
	V',\;\;(\phi+\phi')(v+v'):=\phi(v)+\phi'(v'),\;\;\forall v\in
	V,v'\in V'.
\end{equation}

\begin{pro}
Let $(A,[-,-]_{A},P)$ be an averaging Lie algebra. Suppose that $V$ is a vector space, and $\rho:A\rightarrow\mathrm{End} (V),\; \alpha:V\rightarrow V$ are linear maps.
Then $(\rho,\alpha,V)$ is a representation of $(A,[-,-]_{A},P)$ if and only if $(A\ltimes_{\rho}V,P+\alpha)$ is an averaging Lie algebra.
\end{pro}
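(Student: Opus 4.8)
The plan is to turn the averaging-operator condition for $P+\alpha$ on $A\ltimes_{\rho}V$ into an identity purely in $\rho$ and $\alpha$, and then match it with \eqref{eq:rep ao}. Recall first that $A\ltimes_{\rho}V$ refers to the vector space $A\oplus V$ equipped with the bracket
\[
[x+u,y+v]:=[x,y]_{A}+\rho(x)v-\rho(y)u,\qquad x,y\in A,\ u,v\in V,
\]
and that this is the Lie-algebra instance of Definition~\ref{defi:971}: the bracket makes $A\oplus V$ a Lie algebra exactly when $(\rho,V)$ is a representation of $(A,[-,-]_{A})$. Since a representation of the averaging Lie algebra $(A,[-,-]_{A},P)$ includes, by definition, that $(\rho,V)$ is a representation of $(A,[-,-]_{A})$, and since the assertion that $(A\ltimes_{\rho}V,P+\alpha)$ is an averaging Lie algebra presupposes that $A\ltimes_{\rho}V$ is a Lie algebra, in both directions of the equivalence $(\rho,V)$ may be assumed to be a representation of $(A,[-,-]_{A})$; what is left is to compare \eqref{eq:rep ao} with the averaging condition \eqref{eq:Ao} for $Q:=P+\alpha$ on $A\ltimes_{\rho}V$.

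Next I would expand $[Q(X),Q(Y)]=Q\big([Q(X),Y]\big)$ for $X=x+u$ and $Y=y+v$, using $Q(x+u)=P(x)+\alpha(u)$ together with the bracket above, and split the resulting identity into its $A$- and $V$-components. The $A$-component is precisely $[P(x),P(y)]_{A}=P([P(x),y]_{A})$, which holds automatically since $P$ is an averaging operator on $(A,[-,-]_{A})$ — a hypothesis in either direction of the proof. The $V$-component reads
\[
\rho(Px)\alpha(v)-\rho(Py)\alpha(u)=\alpha\big(\rho(Px)v\big)-\alpha\big(\rho(y)\alpha(u)\big),\qquad\forall\,x,y\in A,\ u,v\in V.
\]
Hence $(A\ltimes_{\rho}V,P+\alpha)$ is an averaging Lie algebra if and only if this displayed $V$-component identity holds for all such $x,y,u,v$.

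It then remains to show that this $V$-component identity is equivalent to \eqref{eq:rep ao}. Assuming \eqref{eq:rep ao}, the relation $\rho(Px)\alpha(v)=\alpha(\rho(Px)v)$ rewrites the first term on the left-hand side, and \eqref{eq:rep ao} with $(x,v)$ replaced by $(y,u)$ gives $\rho(Py)\alpha(u)=\alpha(\rho(Py)u)=\alpha(\rho(y)\alpha(u))$, rewriting the second term, so that after substitution the two sides coincide. For the converse, putting $u=0$ gives $\rho(Px)\alpha(v)=\alpha(\rho(Px)v)$ for all $x,v$, which is the first equality in \eqref{eq:rep ao}; putting $v=0$ gives $\rho(Py)\alpha(u)=\alpha(\rho(y)\alpha(u))$ for all $y,u$; combining these two specializations (the first one applied to $(y,u)$) forces $\alpha(\rho(Py)u)=\alpha(\rho(y)\alpha(u))$, and together these recover the full chain \eqref{eq:rep ao}.

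The argument is a routine computation, so there is no genuine obstacle; the points that need a little care are keeping track of the semidirect bracket correctly when expanding the averaging identity, and observing that the $V$-component identity must be tested at the two separate specializations $u=0$ and $v=0$ — the diagonal case $u=v$ alone is not enough — in order to recover the middle and last terms of the chain \eqref{eq:rep ao}.
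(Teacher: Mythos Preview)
Your proposal is correct and takes essentially the same approach as the paper: the paper's own proof is the single line ``It follows from a straightforward checking,'' and what you have written is precisely that straightforward checking carried out in full, splitting the averaging identity for $P+\alpha$ on $A\ltimes_{\rho}V$ into its $A$- and $V$-components and matching the latter against \eqref{eq:rep ao} via the specializations $u=0$ and $v=0$.
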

\begin{proof}
It follows from a straightforward checking.
\end{proof}

Now we introduce the notion of admissible averaging Lie algebras.
\begin{defi}\label{def:aaLg}
Let $(A,[-,-]_{A},P)$ be an averaging Lie algebra and
$Q:A\rightarrow A$ be a linear map. We say that $Q$ is {\bf
admissible  to $(A,[-,-]_{A},P)$} if
 \begin{equation}\label{eq:ao pair}
     [P(x),Q(y)]_{A}=Q\big([P(x),y]_{A}\big)=Q\big([x,Q(y)]_{A}\big),\;\forall x,y\in A.
 \end{equation}
 In this case, we also call $(A,[-,-]_{A},P,Q)$ an {\bf admissible averaging Lie algebra}.
\end{defi}

\begin{cor}
    Let $(A,[-,-]_{A},P)$ be an averaging Lie algebra.
    Then $(\mathrm{ad}^{*}_{A},Q^{*},A^{*})$ is a representation of $(A,[-,-]_{A},P)$ if and only if \eqref{eq:ao pair} holds such that $(A,[-,-]_{A},P,Q)$ is an admissible averaging Lie algebra.
\end{cor}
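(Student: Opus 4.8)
The plan is to reduce the statement to a direct dualization of the operator-compatibility axiom \eqref{eq:rep ao}, observing that the underlying Lie-algebra representation carries no information here. Indeed, $(\mathrm{ad}^{*}_{A},A^{*})$ is automatically a representation of the Lie algebra $(A,[-,-]_{A})$, being the linear dual of the adjoint representation $(\mathrm{ad}_{A},A)$. Hence $(\mathrm{ad}^{*}_{A},Q^{*},A^{*})$ is a representation of the averaging Lie algebra $(A,[-,-]_{A},P)$ if and only if \eqref{eq:rep ao} holds for $(\rho,\alpha,V)=(\mathrm{ad}^{*}_{A},Q^{*},A^{*})$, i.e.
\begin{equation*}
\mathrm{ad}^{*}_{A}(Px)Q^{*}(\xi)=Q^{*}\big(\mathrm{ad}^{*}_{A}(Px)\xi\big)=Q^{*}\big(\mathrm{ad}^{*}_{A}(x)Q^{*}(\xi)\big),\qquad\forall\, x\in A,\ \xi\in A^{*},
\end{equation*}
so the whole statement reduces to showing that this chain of identities in $A^{*}$ is equivalent to \eqref{eq:ao pair}.

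To do this, I would pair each of the three terms with an arbitrary $y\in A$ and unwind, using the sign-twisted duality \eqref{eq:dual} in the form $\langle\mathrm{ad}^{*}_{A}(u)\xi,v\rangle=-\langle\xi,[u,v]_{A}\rangle$ together with the ordinary transpose $\langle Q^{*}(\xi),v\rangle=\langle\xi,Q(v)\rangle$. A short computation yields
\begin{equation*}
\langle\mathrm{ad}^{*}_{A}(Px)Q^{*}(\xi),y\rangle=-\langle\xi,Q([Px,y]_{A})\rangle,\qquad
\langle Q^{*}(\mathrm{ad}^{*}_{A}(Px)\xi),y\rangle=-\langle\xi,[Px,Qy]_{A}\rangle,
\end{equation*}
and $\langle Q^{*}(\mathrm{ad}^{*}_{A}(x)Q^{*}(\xi)),y\rangle=-\langle\xi,Q([x,Qy]_{A})\rangle$. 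Since $\xi\in A^{*}$ and $y\in A$ are arbitrary, the displayed chain of equalities holds if and only if
\begin{equation*}
Q([Px,y]_{A})=[Px,Qy]_{A}=Q([x,Qy]_{A}),\qquad\forall\, x,y\in A,
\end{equation*}
which is precisely \eqref{eq:ao pair}, the defining condition for $(A,[-,-]_{A},P,Q)$ to be an admissible averaging Lie algebra. This proves both implications simultaneously.

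The argument is essentially a bookkeeping matter, so I do not expect a genuine obstacle; the one point requiring care is keeping the two dual conventions distinct, so that the factor $-1$ appears uniformly in all three paired expressions and the two equality signs of \eqref{eq:rep ao} correspond term by term to those of \eqref{eq:ao pair}. (Were one to use the sign-twisted dual for $Q$ as well, the last two terms would differ by a sign and the correspondence would fail, which serves as a sanity check that $Q^{*}$ here denotes the plain transpose of $Q$.)
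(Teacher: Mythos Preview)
Your proof is correct and is precisely the direct computation the paper has in mind; the paper's own proof is the single line ``It follows from a direct computation.'' Your handling of the two dual conventions (sign-twisted for $\mathrm{ad}^{*}_{A}$ via \eqref{eq:dual}, plain transpose for $Q^{*}$) is exactly right and is the only subtlety in the argument.
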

\begin{proof}
    It follows from a direct computation.
\end{proof}

\begin{pro}\label{pro:2.6}
    Let $(A,[-,-]_{A},P)$ be an averaging Lie algebra and $\mathcal{B}$ be a nondegenerate invariant bilinear form on the Lie algebra $(A,[-,-]_{A})$.
    Then $(A,[-,-]_{A},P,\widehat{P})$ is an admissible averaging Lie algebra, where $\widehat{P}$ is the adjoint map of $P$ associated to $\mathcal{B}$.
    Moreover, $(\mathrm{ad}^{*}_{A},\widehat{P}^{*},A^{*})$ and $(\mathrm{ad}_{A},P,A)$ are equivalent as representations of $(A,[-,-]_{A},P)$.

    Conversely, let $(A,[-,-]_{A},P,Q)$ be an admissible averaging Lie algebra.
    If the resulting representation $(\mathrm{ad}^{*}_{A},Q^{*},A^{*})$ of $(A,[-,-]_{A},P)$ is equivalent to $(\mathrm{ad}_{A},P,A)$, then there exists a nondegenerate invariant bilinear form $\mathcal{B}$ on $(A,[-,-]_{A})$ such that $Q=\widehat{P}$.
\end{pro}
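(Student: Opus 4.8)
The plan is to route both directions through the single linear isomorphism $\phi:=\mathcal{B}^{\natural}\colon A\to A^{*}$ (and, in the converse, through the isomorphism realizing the given equivalence), exactly in the spirit of the proof of Theorem~\ref{thm:3}. The point is that once $\phi$ is shown to intertwine the adjoint data with the coadjoint data, the admissibility of $\widehat{P}$ is delivered by the Corollary above, which characterizes admissibility of a linear map $Q$ by the property that $(\mathrm{ad}_{A}^{*},Q^{*},A^{*})$ is a representation of $(A,[-,-]_{A},P)$.

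For the forward direction, I would first observe that $\phi=\mathcal{B}^{\natural}$ is bijective because $\mathcal{B}$ is nondegenerate, and that the invariance $\mathcal{B}([x,y]_{A},z)=\mathcal{B}(x,[y,z]_{A})$ is, by antisymmetry of the bracket, equivalent to $\mathcal{B}([x,y]_{A},z)=-\mathcal{B}(y,[x,z]_{A})$. Reading this last identity through the sign convention~\eqref{eq:dual} yields $\phi([x,v]_{A})=\mathrm{ad}_{A}^{*}(x)\phi(v)$ for all $x,v\in A$, and reading the defining relation $\mathcal{B}(P(x),y)=\mathcal{B}(x,\widehat{P}(y))$ of $\widehat{P}$ (Proposition~\ref{pro:1926}) through the definition of the transpose $\widehat{P}^{*}$ yields $\phi(P(v))=\widehat{P}^{*}(\phi(v))$. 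Thus $\phi$ carries the representation axioms of the averaging Lie algebra from the adjoint representation $(\mathrm{ad}_{A},P,A)$ onto $(\mathrm{ad}_{A}^{*},\widehat{P}^{*},A^{*})$, so the latter is a representation of $(A,[-,-]_{A},P)$ and is equivalent to the former. By the Corollary above this is precisely the assertion that \eqref{eq:ao pair} holds with $Q=\widehat{P}$, i.e.\ that $(A,[-,-]_{A},P,\widehat{P})$ is an admissible averaging Lie algebra, and the equivalence claimed in the Proposition is the one just exhibited.

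For the converse, let $\phi\colon A\to A^{*}$ be a linear isomorphism realizing the equivalence of $(\mathrm{ad}_{A}^{*},Q^{*},A^{*})$ with $(\mathrm{ad}_{A},P,A)$ (equivalence being symmetric, it may be taken in this direction), so that $\phi\circ\mathrm{ad}_{A}(x)=\mathrm{ad}_{A}^{*}(x)\circ\phi$ and $\phi\circ P=Q^{*}\circ\phi$. Set $\mathcal{B}(x,y):=\langle\phi(x),y\rangle$, which is nondegenerate since $\phi$ is bijective. Unwinding the first intertwining relation via~\eqref{eq:dual} turns it into $\mathcal{B}([x,y]_{A},z)=-\mathcal{B}(y,[x,z]_{A})$, which is the invariance of $\mathcal{B}$; unwinding the second turns it into $\mathcal{B}(P(x),y)=\mathcal{B}(x,Q(y))$ for all $x,y$, and by nondegeneracy of $\mathcal{B}$ in its first argument this says exactly that $Q$ is the adjoint $\widehat{P}$ of $P$ with respect to the $\mathcal{B}$ just constructed.

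The whole argument is a translation between the two languages (bilinear forms and equivalences of representations); the only things that need care are keeping the two transpose conventions straight --- the sign in~\eqref{eq:dual} for the dual of a representation versus the plain transpose of an endomorphism of $A$ --- the reformulation of invariance via antisymmetry of the bracket, and, if $\mathcal{B}$ is not assumed symmetric, fixing $\widehat{P}$ from the left slot of $\mathcal{B}$ as in Proposition~\ref{pro:1926}. I do not expect a real obstacle: all the content is in the two intertwining identities for $\phi$, after which the admissibility of $\widehat{P}$ is simply read off from the Corollary above.
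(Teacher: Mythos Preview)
Your proposal is correct and is exactly the standard transport-of-structure argument that the paper has in mind when it writes ``similar to that of \cite[Proposition~3.9]{Bai2021}'': the linear isomorphism $\phi=\mathcal{B}^{\natural}$ intertwines $(\mathrm{ad}_{A},P)$ with $(\mathrm{ad}_{A}^{*},\widehat{P}^{*})$, whence the Corollary preceding the Proposition gives admissibility, and conversely an equivalence $\phi$ produces $\mathcal{B}$ with $Q=\widehat{P}$. Your handling of the two transpose conventions (the signed one of~\eqref{eq:dual} for $\mathrm{ad}^{*}$ versus the plain transpose for $\widehat{P}^{*}$) is the only subtlety, and you have it right.
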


\begin{proof}
    The proof is similar to that of \cite[Proposition 3.9]{Bai2021}.
\end{proof}

There is the following relationship between admissible averaging
Lie algebras and \sctplas.

\begin{pro}\label{pro:com asso and SDPP}
    Let $(A,[-,-]_{A},P,Q)$ be an admissible averaging Lie algebra.
   Define two  multiplications $\succ_{A},$
    $\prec_{A}:A\otimes A\rightarrow A$ on $A$  by
    \begin{equation}\label{eq:com asso and SDPP}
        x\succ_{A}y=[P(x),y]_{A}-Q([x,y]_{A}),\;
        x\prec_{A}y=Q([x,y]_{A}),\;\forall x,y\in A.
    \end{equation}
    Then
    $(A,\succ_{A},\prec_{A})$ is a \sctpla.
\end{pro}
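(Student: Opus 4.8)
The plan is to verify the three conditions in the explicit description of \sctplas provided by Corollary~\ref{cor:SDPP}~\eqref{s4}, namely that $\prec_{A}$ is anticommutative, that $(A,\circ_{A})$ is a Leibniz algebra, and that the identity \eqref{eq:SDPP} holds. This route is cleaner than checking the defining identities \eqref{eq:gppa2,2}--\eqref{eq:gppa4,2} of a \sctpla one at a time.

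First I would observe that $x\circ_{A}y=x\succ_{A}y+x\prec_{A}y=[P(x),y]_{A}$, so $(A,\circ_{A})$ is precisely the Leibniz algebra induced from the averaging operator $P$ as in Proposition~\ref{ex:Lie aver}; in particular it is a Leibniz algebra. The anticommutativity of $\prec_{A}$ is immediate: $y\prec_{A}x=Q([y,x]_{A})=-Q([x,y]_{A})=-x\prec_{A}y$ since $[-,-]_{A}$ is antisymmetric and $Q$ is linear. (This anticommutativity is also what makes the auxiliary product $\bullet_{A}$ of \eqref{eq:Dpp2} coincide with $\circ_{A}$, so that Corollary~\ref{cor:SDPP} genuinely applies.)

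The substance is the identity \eqref{eq:SDPP}. Fix $x,y,z\in A$ and set $w:=[y,z]_{A}$, so that $y\prec_{A}z=Q(w)$. For the equality $x\circ_{A}(y\prec_{A}z)=x\prec_{A}(y\prec_{A}z)$, the left-hand side equals $[P(x),Q(w)]_{A}$ and the right-hand side equals $Q([x,Q(w)]_{A})$, and these agree by the outer equality in the admissibility relation \eqref{eq:ao pair} (applied with $w$ in place of $y$). For the equality $x\circ_{A}(y\prec_{A}z)=(x\circ_{A}y)\prec_{A}z+y\prec_{A}(x\circ_{A}z)$, I would start instead from the left-hand equality in \eqref{eq:ao pair}, giving $[P(x),Q(w)]_{A}=Q([P(x),w]_{A})$, then expand $[P(x),w]_{A}=[P(x),[y,z]_{A}]_{A}=[[P(x),y]_{A},z]_{A}+[y,[P(x),z]_{A}]_{A}$ by the Jacobi identity and distribute $Q$ over the sum; since $[P(x),y]_{A}=x\circ_{A}y$ and $[P(x),z]_{A}=x\circ_{A}z$, the result is $Q([x\circ_{A}y,z]_{A})+Q([y,x\circ_{A}z]_{A})=(x\circ_{A}y)\prec_{A}z+y\prec_{A}(x\circ_{A}z)$. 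Applying Corollary~\ref{cor:SDPP}~\eqref{s4} then finishes the proof. There is no serious obstacle here; the only point requiring care is using the two halves of \eqref{eq:ao pair} in the right places — one half to turn an inner $\circ_{A}$-product into a $\prec_{A}$-product, the other to expose the Lie bracket so that the Jacobi identity can be invoked. This statement also generalizes Proposition~\ref{pro:1926}, which is the special case $Q=\widehat{P}$ arising from a nondegenerate symmetric invariant bilinear form.
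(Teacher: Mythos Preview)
Your argument is correct. You verify the three conditions of Corollary~\ref{cor:SDPP}\,\eqref{s4} directly: anticommutativity of $\prec_{A}$ is immediate, $(A,\circ_{A})$ is Leibniz by Proposition~\ref{ex:Lie aver}, and you derive \eqref{eq:SDPP} by splitting the admissibility relation \eqref{eq:ao pair} into its two halves and invoking the Jacobi identity. All steps check out.

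The paper takes a different, more structural route. Rather than verifying \eqref{eq:SDPP} by hand, it observes that admissibility of $Q$ makes $(\mathrm{ad}^{*}_{A},Q^{*},A^{*})$ a representation of the averaging Lie algebra $(A,[-,-]_{A},P)$, so that $(A\ltimes_{\mathrm{ad}^{*}_{A}}A^{*},P+Q^{*})$ is again an averaging Lie algebra. The induced Leibniz structure on $A\oplus A^{*}$ is then computed and shown to be the semidirect product $A\ltimes_{L^{*}_{\circ_{A}},-L^{*}_{\prec_{A}}}A^{*}$; hence $(L^{*}_{\circ_{A}},-L^{*}_{\prec_{A}},A^{*})$ is a representation of $(A,\circ_{A})$, which (together with the anticommutativity of $\prec_{A}$, so $-L^{*}_{\prec_{A}}=R^{*}_{\prec_{A}}$) is precisely the dual type-$b$ characterization of a \sctpla. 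Your approach is shorter and entirely elementary, needing no passage through $A\oplus A^{*}$. The paper's approach, on the other hand, makes transparent \emph{why} admissibility is the right hypothesis---it is exactly what is needed for the coadjoint-type representation---and this viewpoint is what drives the Manin triple constructions in Section~\ref{sec:5}.
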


\begin{proof}
    By  Proposition~\ref{ex:Lie aver}, $(A,\circ_{A}=\succ_{A}+\prec_{A})$ satisfies \eqref{eq:Leibniz from aver op} and hence is a Leibniz algebra.
    By the assumption,
    $(A\ltimes_{\mathrm{ad}^*_{A}}A^*,P+Q^*)$ is an averaging
    Lie algebra. Thus there is a Leibniz algebra
    $(A\oplus A^*,\circ_{d})$ with $\circ_d$ defined by
\begin{eqnarray*}
(x+a^{*})\circ_{d}(y+b^{*})&=&[(P+Q^{*})(x+a^{*}),y+b^{*}]_{d} \\
&=&[P(x)+Q^{*}(a^{*}),y+b^{*}]_{d} \\
&=&[P(x),y]_{A}+\mathrm{ad}_{A}^{*}\big(P(x)\big)
b^{*}-\mathrm{ad}_{A}^{*}(y)Q^{*}(a^{*}) \\
&\overset{\eqref{eq:Leibniz from aver op},\eqref{eq:com asso and SDPP}}{=}&x\circ_{A}y+
 L_{\circ_{A}}^{*}(x)b^{*}- L_{\prec_{A}}^{*}(y)a^{*},
\;\;\forall x,y\in A, a^*,b^*\in A^*.
\end{eqnarray*}
    That is, $( L^{*}_{\circ_{A}},- L^{*}_{\prec_{A}},A^{*})$ is a representation of $(A,\circ_{A})$. Hence  $(A,\succ_{A},\prec_{A})$ is a \sctpla which is compatible with $(A,\circ_{A})$.
\end{proof}

\begin{ex}\label{ex:ex2.6}
Let $A$ be a vector space with a multiplication $\circ_{A}:A\otimes A\rightarrow A$.
Let $(\mathrm{End}_{\mathbb
K}(A)\oplus A,[-,-]_{d})$ be the Lie algebra with $[-,-]_{d}$  given by
\begin{eqnarray}
[f+x,g+y]_{d}=[f,g]+f(y)-g(x),\;\forall f,g\in \mathrm{End}_{\mathbb
K}(A),x,y\in A,
\end{eqnarray}
where $[f,g]:=fg-gf$ is the canonical Lie bracket on $\mathrm{End}_{\mathbb
K}(A)$. Define linear maps
 $P,Q: \mathrm{End}_{\mathbb
K}(A)\oplus A\rightarrow \mathrm{End}_{\mathbb
K}(A)\oplus A $ by
 \begin{eqnarray}
    P(f+x)= L_{\circ_{A}}(x),\;
    Q(f+x)=- R_{\circ_{A}}(x),\;\forall f\in \mathrm{End}_{\mathbb K}(A), x\in A.
 \end{eqnarray}
By a straightforward computation, $(\mathrm{End}_{\mathbb
K}(A)\oplus A,[-,-]_{d},P,Q)$
is an admissible averaging Lie algebra if and only if $(A,\circ_{A})$ is a Leibniz algebra.
In this case, there is a
\sctpla $(\mathrm{End}_{\mathbb
    K}(A)\oplus A,\succ_{d},\prec_{d})$ induced from $(\mathrm{End}_{\mathbb K}(A)\oplus A,[-,-]_{d},P,Q)$, where the multiplications $\succ_{d},\prec_{d}$ are explicitly given by
\begin{eqnarray*}
    &&(f+x)\prec_{d}(g+y)
    = R_{\circ_{A}}\big(g(x)\big)- R_{\circ_{A}}\big(f(y)\big),\\
    &&(f+x)\succ_{d}(g+y)
    =[ L_{\circ_{A}}(x),g]+x\circ_{A}y
    + R_{\circ_{A}}\big(f(y)\big)- R_{\circ_{A}}\big(g(x)\big),\;\forall f,g\in \mathrm{End}_{\mathbb
        K}(A),x,y\in A.
\end{eqnarray*}
Thus we have the sub-adjacent Leibniz algebra
$(\mathrm{End}_{\mathbb K}(A)\oplus A,\circ_{d})$
of $(\mathrm{End}_{\mathbb
    K}(A)\oplus A,\succ_{d},\prec_{d})$, where $\circ_d$ is given by
\begin{eqnarray*}
    (f+x)\circ_{d}(g+y):=[ L_{\circ_{A}}(x),g]+x\circ_{A}y,\;\forall f,g\in \mathrm{End}_{\mathbb
        K}(A),x,y\in A.
\end{eqnarray*}
\end{ex}

\section{\Sctplbs and their realizations from averaging Lie bialgebras}\label{sec:5}\

We introduce the notion of a \sctplb, which is equivalently characterized as a
  Manin
triple  of Leibniz algebras associated to the nondegenerate
symmetric left-invariant bilinear form, as well as a  Manin triple
of \sctplas. Then we introduce the notion of an averaging Lie
bialgebra, which is equivalently characterized as a Manin triple
of averaging Lie algebras. Finally, we demonstrate that a \sctplb
can be obtained from an averaging Lie bialgebra, and equivalently,
a Manin triple of \sctplas can be obtained from a Manin triple of
averaging Lie algebras. 

\subsection{\Sctplbs and their characterizations in terms of Manin triples}\label{sec3.1}\

Recall that a {\bf Leibniz coalgebra} \cite{ST} is a vector
space $A$ with a co-multiplication $\eta:A\rightarrow A\otimes A$
such that the following equation holds:
\begin{eqnarray}\label{eq:leco2}
    (\eta\otimes\mathrm{id})\eta(x)+(\tau\otimes\mathrm{id})
    (\mathrm{id}\otimes\eta)\eta(x)-(\mathrm{id}\otimes\eta)\eta(x)=0
    ,\;\forall x\in A.
\end{eqnarray}

Now we introduce the notion of a \sctplc.

\begin{defi}
    Let $A$ be a vector space with co-multiplications $\vartheta,\theta:A\rightarrow A\otimes A$, and $\eta=\vartheta+\theta$.
    If $(A,\eta)$ is a Leibniz coalgebra and the following equations hold:
    \begin{eqnarray}
        \theta(x)&=&-\tau\theta(x),\label{eq:co1} \\
        (\mathrm{id}\otimes\theta)\eta(x)&=&(\eta\otimes\mathrm{id})  \theta(x)+(\tau\otimes\mathrm{id})(\mathrm{id}\otimes\eta)\theta(x)
        ,\label{eq:co4}\\
        (\mathrm{id}\otimes\theta)\vartheta(x)&=&0
        ,\;\forall x\in A,\label{eq:co5}
    \end{eqnarray}
    then we call $(A,\vartheta,\theta)$ a {\bf \sctplc}.
\end{defi}

\begin{pro}\label{lem:co}
    Let $A$ be a vector space and $\vartheta,\theta :A\rightarrow A\otimes A$ be co-multiplications.
    Let $\succ_{A^{*}},\prec_{A^{*}}:A^{*}\otimes A^{*}\rightarrow A^{*}$ be the linear duals of $\vartheta$ and $\theta$ respectively, that is, the following equations hold:
    \begin{equation}
        \langle a^{*}\succ_{A^{*}}b^{*},x\rangle=\langle a^{*}\otimes b^{*},\vartheta(x)\rangle,\;\langle a^{*}\prec_{A^{*}}b^{*},x\rangle=\langle a^{*}\otimes b^{*},\theta(x)\rangle,\;\forall x\in A, a^{*},b^{*}\in A^{*}.
    \end{equation}
    Then $(A^{*},\succ_{A^{*}},\prec_{A^{*}})$ is a \sctpla if and only
    if $(A,\vartheta,\theta )$ is a \sctplc.
\end{pro}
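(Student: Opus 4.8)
The plan is to prove both directions simultaneously by a term-by-term linear-duality argument, matching each defining condition of a \sctpla on $A^{*}$ (in the explicit form of Corollary~\ref{cor:SDPP}\,\eqref{s4}: $\prec_{A^{*}}$ anticommutative, $(A^{*},\circ_{A^{*}})$ a Leibniz algebra, and \eqref{eq:SDPP}) against the corresponding defining condition of a \sctplc on $A$. Since all spaces are finite-dimensional, $A^{**}=A$ and the pairing between $A^{*}$ and $A$ is perfect, so an equation among elements of $A^{*}$ holds iff all of its pairings with elements of $A$ vanish, and an equation in $A\otimes A\otimes A$ holds iff all of its pairings with elements of $A^{*}\otimes A^{*}\otimes A^{*}$ vanish. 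The first thing to record is that $\circ_{A^{*}}:=\succ_{A^{*}}+\prec_{A^{*}}$ is exactly the linear dual of $\eta=\vartheta+\theta$, i.e. $\langle a^{*}\circ_{A^{*}}b^{*},x\rangle=\langle a^{*}\otimes b^{*},\eta(x)\rangle$ for all $a^{*},b^{*}\in A^{*}$, $x\in A$.

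First the routine pieces. Transposing the Leibniz identity \eqref{eq:Leibniz} for $\circ_{A^{*}}$ term by term shows that $(A^{*},\circ_{A^{*}})$ is a Leibniz algebra if and only if $(\mathrm{id}\otimes\eta)\eta(x)=(\eta\otimes\mathrm{id})\eta(x)+(\tau\otimes\mathrm{id})(\mathrm{id}\otimes\eta)\eta(x)$ for all $x\in A$, which is precisely the co-Leibniz identity \eqref{eq:leco2} (this is the computation underlying the notion of Leibniz coalgebra in \cite{ST}). Next, directly from the definition of the transpose, $\prec_{A^{*}}$ is anticommutative if and only if $\langle a^{*}\otimes b^{*},\theta(x)+\tau\theta(x)\rangle=0$ for all $a^{*},b^{*}\in A^{*}$ and $x\in A$, that is, \eqref{eq:co1}.

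The substantive step is dualizing the double identity \eqref{eq:SDPP}, which I would split as the conjunction of $x\circ_{A^{*}}(y\prec_{A^{*}}z)=(x\circ_{A^{*}}y)\prec_{A^{*}}z+y\prec_{A^{*}}(x\circ_{A^{*}}z)$ and $x\circ_{A^{*}}(y\prec_{A^{*}}z)=x\prec_{A^{*}}(y\prec_{A^{*}}z)$. Pairing the first against $a^{*}\otimes b^{*}\otimes c^{*}$ with the substitution $x\mapsto a^{*}$, $y\mapsto b^{*}$, $z\mapsto c^{*}$, and unwinding the transposes of $\eta$ and $\theta$, turns its left-hand side into $\langle a^{*}\otimes b^{*}\otimes c^{*},(\mathrm{id}\otimes\theta)\eta(x)\rangle$ and the two right-hand terms into $\langle a^{*}\otimes b^{*}\otimes c^{*},(\eta\otimes\mathrm{id})\theta(x)\rangle$ and $\langle a^{*}\otimes b^{*}\otimes c^{*},(\tau\otimes\mathrm{id})(\mathrm{id}\otimes\eta)\theta(x)\rangle$ respectively; hence this identity is equivalent to \eqref{eq:co4}. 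The same bookkeeping turns the second identity into $(\mathrm{id}\otimes\theta)\eta(x)=(\mathrm{id}\otimes\theta)\theta(x)$, i.e. $(\mathrm{id}\otimes\theta)\vartheta(x)=0$, which is \eqref{eq:co5}. Assembling the four equivalences then yields the proposition.

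I expect the only delicate point to be the tensor-factor bookkeeping in the mixed term $y\prec_{A^{*}}(x\circ_{A^{*}}z)$: its transpose applies $\eta$ in the second slot of $\theta(x)$ and then requires swapping the first two factors, which is exactly the source of the $(\tau\otimes\mathrm{id})(\mathrm{id}\otimes\eta)$ summand in \eqref{eq:co4}. Keeping straight which co-multiplication acts in which slot, and where the flip enters, is essentially the whole content of the computation; everything else is mechanical.
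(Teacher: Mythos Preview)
Your proof is correct and follows exactly the approach the paper has in mind: the paper's proof consists of the single sentence ``It follows directly from Corollary~\ref{cor:SDPP},'' and what you have written is precisely the term-by-term dualization of the explicit characterization in Corollary~\ref{cor:SDPP}\,\eqref{s4} against the coalgebra axioms \eqref{eq:co1}--\eqref{eq:co5}. Your bookkeeping on the flip in the mixed term is right, and the splitting of \eqref{eq:SDPP} into its two equalities matching \eqref{eq:co4} and \eqref{eq:co5} is exactly the intended correspondence.
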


\begin{proof}
It follows directly from Corollary \ref{cor:SDPP}.
\end{proof}

Now we give the notion of a \sctplb.

\begin{defi}
    Let $(A,\succ_{A},\prec_{A})$ be a \sctpla. Suppose that there is a
    \sctplc $(A,\vartheta,\theta)$ and the following equations hold:
{\small
    \begin{align}
&\eta(x\prec_{A}y)=
        (\mathrm{id}\otimes\mathrm{id}-\tau)\big(\mathrm{id}\otimes L_{\prec_{A}}(x)\big)\eta(y)
        +(\mathrm{id}\otimes\mathrm{id}-\tau)\big(\mathrm{id}\otimes R_{\prec_{A}}(y)\big)\eta(x),\label{eq:bialg1}\\
        & \big( L_{\circ_{A}}(x)\otimes\mathrm{id}\big)\eta(y)=
        \big( L_{\prec_{A}}(x)  \otimes\mathrm{id}\big)\eta(y)
        +\big(\mathrm{id}\otimes R_{\circ_{A}}(y)\big)\eta(x)
        -\big(\mathrm{id}\otimes R_{\circ_{A}}(y)\big)\theta(x),\label{eq:bialg2}\\
        & \theta (x\circ_{A}y)=\big(\mathrm{id}\otimes L_{\circ_{A}}(x)
        + L_{\circ_{A}}(x)\otimes\mathrm{id}\big)\theta(y)
        -\big(\mathrm{id}\otimes L_{\circ_{A}}(y)
        + L_{\circ_{A}}(y)\otimes\mathrm{id}\big)\theta(x),\label{eq:bialg3}\\
        &\eta(x\circ_{A}y)=\big(\mathrm{id}\otimes R_{\circ_{A}}(y)\big)\eta(x)
        +\big(\mathrm{id}\otimes L_{\circ_{A}}(x)
        + L_{\prec_{A}}(x)\otimes\mathrm{id}\big)\eta(y)
        -\big( L_{\prec_{A}}(y)\otimes\mathrm{id}\big)\theta(x),\label{eq:bialg4}
    \end{align}}
for all $x,y\in A$. Then we say that  $(A,\succ_{A},\prec_{A},\vartheta,\theta)$ is a \textbf{\sctplb}.
\end{defi}

\begin{lem}\label{lem:mp}\cite{AM}
    Let $(A,\circ_{A})$ and $(B,\circ_{B})$ be two Leibniz algebras and
    $l_{A},r_{A}:A\rightarrow \mathrm{End}_{\mathbb K}(B),$
$l_{B},r_{B}:B\rightarrow \mathrm{End}_{\mathbb K}(A)$ be linear maps.
    Then there is a Leibniz algebra structure on $A\oplus B$ given by
    \begin{small}
        \begin{equation}        (x+a)\circ(y+b):=x\circ_{A}y+l_{B}(a)y+r_{B}(b)x+a\circ_{B}b+l_{A}(x)b+r_{A}(y)a,\;\forall x,y\in A, a,b\in B
        \end{equation}
    \end{small}if and only if $(l_{A},r_{A},B)$ and $(l_{B},r_{B},A)$ are representations of $(A,\circ_{A})$ and $(B,\circ_{B})$ respectively, and the following equations
    hold
    \begin{eqnarray}
        r_{A}(x)(a\circ_{B}b)-a\circ_{B}r_{A}(x)b+b\circ_{B}r_{A}(x)a-r_{A}(l_{B}(b)x)a+r_{A}(l_{B}(a)x)b=0,&&\label{eq:mp1}\\
        l_{A}(x)(a\circ_{B}b)-l_{A}(x)a\circ_{B}b-a\circ_{B}l_{A}(x)b-l_{A}(r_{B}(a)x)b-r_{A}(r_{B}(b)x)a=0,&&\label{eq:mp2}\\
        l_{A}(x)a\circ_{B}b+l_{A}(r_{B}(a)x)b+r_{A}(x)a\circ_{B}b+l_{A}(l_{B}(a)x)b=0,&&\label{eq:mp3}\\
        r_{B}(a)(x\circ_{A}y)-x\circ_{A}r_{B}(a)y+y\circ_{A}r_{B}(a)x-r_{B}(l_{A}(y)a)x+r_{B}(l_{A}(x)a)y=0,&&\label{eq:mp4}\\
        l_{B}(a)(x\circ_{A}y)-l_{B}(a)x\circ_{A}y-x\circ_{A}l_{B}(a)y-l_{B}(r_{A}(x)a)y-r_{B}(r_{A}(y)a)x=0,&&\label{eq:mp5}\\
        l_{B}(a)x\circ_{A}y+l_{B}(r_{A}(x)a)y+r_{B}(a)x\circ_{A}y+l_{B}(l_{A}(x)a)y=0,&&\label{eq:mp6}
    \end{eqnarray}
for all $x, y\in A, a, b\in B$.
\end{lem}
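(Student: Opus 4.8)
The plan is to verify the Leibniz identity \eqref{eq:Leibniz} for the proposed bracket $\circ$ on $A\oplus B$ by a direct but systematic case analysis. Since $\circ$ is bilinear, the trilinear ``Leibniz defect''
$$\mathrm{D}(u,v,w):=u\circ(v\circ w)-(u\circ v)\circ w-v\circ(u\circ w)$$
vanishes on all of $A\oplus B$ if and only if it vanishes on every triple $(u,v,w)$ each of whose entries lies wholly in $A$ or wholly in $B$. This yields $2^{3}=8$ cases, which I will index by words in $\{A,B\}^{3}$ recording the locations of the three arguments. In each case I will expand $\mathrm{D}$ using the four ``block'' formulas $x\circ y=x\circ_{A}y$, $x\circ b=r_{B}(b)x+l_{A}(x)b$, $a\circ y=l_{B}(a)y+r_{A}(y)a$, $a\circ b=a\circ_{B}b$, and then separate the resulting identity into its $A$-component and its $B$-component.

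First, the two pure cases $AAA$ and $BBB$: here every subproduct remains in $A$ (respectively $B$), so $\mathrm{D}$ collapses to the Leibniz defect of $(A,\circ_{A})$ (respectively $(B,\circ_{B})$), which vanishes by hypothesis. Next, the three cases with exactly one $B$-entry, namely $AAB$, $ABA$, $BAA$: I will compute their $B$-components and show that, after eliminating among the three, they are jointly equivalent to the representation axioms \eqref{eq:rep1}, \eqref{eq:rep2}, \eqref{eq:rep3} for $(l_{A},r_{A},B)$ (for instance, the $B$-component of $AAB$ is exactly \eqref{eq:rep1}, whereas the $B$-component of $BAA$ becomes \eqref{eq:rep3} only after subtracting \eqref{eq:rep2}); and I will show that their $A$-components are jointly equivalent to \eqref{eq:mp4}, \eqref{eq:mp5}, \eqref{eq:mp6}. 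By the evident symmetry of the construction under the swap $(A,\circ_{A},l_{A},r_{A})\leftrightarrow(B,\circ_{B},l_{B},r_{B})$, the three cases with exactly one $A$-entry ($ABB$, $BAB$, $BBA$) then give, in precisely the same way, the representation axioms for $(l_{B},r_{B},A)$ together with \eqref{eq:mp1}, \eqref{eq:mp2}, \eqref{eq:mp3}. Hence $\mathrm{D}\equiv 0$ is equivalent to the full list of conditions in the statement: for the ``only if'' direction one reads the conditions off by restricting $\mathrm{D}$ to the homogeneous triples, and for the ``if'' direction one checks that each mixed case closes up using the assumed representation axioms together with the relevant one(s) among \eqref{eq:mp1}--\eqref{eq:mp6}.

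The main obstacle is bookkeeping rather than anything conceptual: the components produced by a single case are typically a linear combination of two of the target identities rather than one of them verbatim, so one must lay out the resulting array of $A$- and $B$-components over the six mixed cases and transform it into the stated form, taking care to use the representation axioms (which have internal dependencies, e.g.\ \eqref{eq:rep3} rewriting $r_{A}(y)l_{A}(x)$ in terms of $r_{A}(y)r_{A}(x)$) when simplifying. A convenient device for controlling this is to write $\circ=\circ'+\circ''$, where $\circ'$ is the bracket of the semidirect product $A\ltimes_{l_{A},r_{A}}B$ and $\circ''$ is the bracket of $B\ltimes_{l_{B},r_{B}}A$; then $\mathrm{D}=\mathrm{D}'+\mathrm{D}''+\mathrm{C}$, where $\mathrm{D}',\mathrm{D}''$ are the Leibniz defects of $\circ',\circ''$ and $\mathrm{C}$ collects the remaining mixed trilinear terms. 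By Definition \ref{defi:971} and the hypothesis that $(A,\circ_{A})$ and $(B,\circ_{B})$ are Leibniz algebras, $\mathrm{D}'\equiv 0$ (resp.\ $\mathrm{D}''\equiv 0$) precisely when $(l_{A},r_{A},B)$ (resp.\ $(l_{B},r_{B},A)$) is a representation, while $\mathrm{C}$ vanishes precisely when \eqref{eq:mp1}--\eqref{eq:mp6} hold. This decomposition both explains the exact shape of \eqref{eq:mp1}--\eqref{eq:mp6} and reduces the verification to a finite, mechanical check.
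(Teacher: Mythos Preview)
The paper does not prove this lemma; it is quoted from \cite{AM} and stated without proof, so there is no in-paper argument to compare against. Your approach is the standard one and is correct: the trilinear Leibniz defect $\mathrm{D}$ is determined by its values on the eight homogeneous triples, and your decomposition $\circ=\circ'+\circ''$ into the two semidirect products is a clean organizing device. The crucial point that makes $\mathrm{D}\equiv 0 \Leftrightarrow (\mathrm{D}'\equiv 0,\ \mathrm{D}''\equiv 0,\ \mathrm{C}\equiv 0)$ rather than merely $\Leftarrow$ is that on each homogeneous triple at most one of $\mathrm{D}',\mathrm{D}''$ is nonzero and it lands in the component of $A\oplus B$ complementary to $\mathrm{C}$; you allude to this but it is worth stating explicitly. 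Your remark that the raw components are not literally \eqref{eq:mp4}--\eqref{eq:mp6} and \eqref{eq:rep1}--\eqref{eq:rep3} but only span the same set of conditions (e.g.\ the $B$-component of $BAA$ is \eqref{eq:rep2} minus \eqref{eq:rep3}, and the $A$-component of $ABA$ is \eqref{eq:mp5} minus \eqref{eq:mp6}) is exactly right and is the only place where care is needed.
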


\begin{pro}\label{pro:2-2}
    Let $(A,\succ_{A},\prec_{A})$ and
    $(A^{*},\succ_{A^{*}},\prec_{A^{*}})$ be two \sctplas, and linear
    maps $\vartheta,\theta:A\rightarrow A\otimes A$ be the linear
    duals of $\succ_{A^{*}}$ and $\prec_{A^{*}}$ respectively. Then
    $(A,\succ_{A}$, $\prec_{A}$, $\vartheta$, $\theta)$ is a \sctplb if and only if there is a Leibniz algebra $(A\oplus A^{*},\circ_{d})$ with $\circ_{d}$
    defined by \begin{equation}\label{eq:A ds}
        (x+a^{*})\circ_{d}(y+b^{*}):
        =x\circ_{A}y+ L^{*}_{\circ_{A^{*}}}(a^{*})y
        - L^{*}_{\prec_{A^{*}}}(b^{*})x
        +a^{*}\circ_{A^{*}}b^{*}+
         L^{*}_{\circ_{A}}(x)b^{*}
        - L^{*}_{\prec_{A}}(y)a^{*},
    \end{equation}
    for all $x,y\in A, a^{*},b^{*}\in A^{*}$.
\end{pro}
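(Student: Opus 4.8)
The plan is to recognize the direct-sum algebra \eqref{eq:A ds} as an instance of the matched pair construction of Leibniz algebras in Lemma \ref{lem:mp}, and then to translate the resulting compatibility conditions into the bialgebra conditions \eqref{eq:bialg1}--\eqref{eq:bialg4} by dualization. Take $B:=A^{*}$ with its sub-adjacent Leibniz product $\circ_{A^{*}}=\succ_{A^{*}}+\prec_{A^{*}}$, and set $l_{A}:=L^{*}_{\circ_{A}}$, $r_{A}:=R^{*}_{\prec_{A}}$, $l_{B}:=L^{*}_{\circ_{A^{*}}}$, $r_{B}:=R^{*}_{\prec_{A^{*}}}$. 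Since $\prec_{A}$ and $\prec_{A^{*}}$ are anticommutative, $R_{\prec_{A}}=-L_{\prec_{A}}$ and $R_{\prec_{A^{*}}}=-L_{\prec_{A^{*}}}$, hence $R^{*}_{\prec_{A}}=-L^{*}_{\prec_{A}}$ and $R^{*}_{\prec_{A^{*}}}=-L^{*}_{\prec_{A^{*}}}$; with these choices the bracket on $A\oplus B$ furnished by Lemma \ref{lem:mp} is precisely $\circ_{d}$ of \eqref{eq:A ds}.

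First I would discharge the representation hypotheses of Lemma \ref{lem:mp}. As $(A,\succ_{A},\prec_{A})$ is a \sctpla, the ``Moreover'' part of Corollary \ref{cor:SDPP} asserts that $(L^{*}_{\circ_{A}},R^{*}_{\prec_{A}},A^{*})=(l_{A},r_{A},B)$ is a representation of $(A,\circ_{A})$; applying the same corollary to the \sctpla $(A^{*},\succ_{A^{*}},\prec_{A^{*}})$ and identifying $A^{**}$ with $A$ shows that $(l_{B},r_{B},A)$ is a representation of $(A^{*},\circ_{A^{*}})$. Therefore Lemma \ref{lem:mp} reduces the claim to the following statement: with the above choices, the six identities \eqref{eq:mp1}--\eqref{eq:mp6} hold if and only if the four identities \eqref{eq:bialg1}--\eqref{eq:bialg4} hold.

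This last equivalence is established equation by equation. Each of \eqref{eq:mp1}--\eqref{eq:mp6} is an operator identity on $A^{*}$ (for \eqref{eq:mp1}--\eqref{eq:mp3}) or on $A$ (for \eqref{eq:mp4}--\eqref{eq:mp6}); evaluating it on a generic element and pairing against a generic element of the complementary space, and using that $\vartheta,\theta$ are by definition the linear duals of $\succ_{A^{*}},\prec_{A^{*}}$ (so $\eta=\vartheta+\theta$ dualizes $\circ_{A^{*}}$) together with the defining relation \eqref{eq:dual} of $L^{*},R^{*}$, transposes the identity into one of the tensor-valued identities \eqref{eq:bialg1}--\eqref{eq:bialg4}, or into a consequence of them. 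Concretely, the three equations \eqref{eq:mp4}--\eqref{eq:mp6}, which involve only $\circ_{A}$, $l_{B}$ and $r_{B}$, transpose directly to comultiplication identities on $A$ that match \eqref{eq:bialg4}, \eqref{eq:bialg2} and \eqref{eq:bialg3} respectively (after invoking \eqref{eq:co1}); the three equations \eqref{eq:mp1}--\eqref{eq:mp3}, which involve $\circ_{A^{*}}$, $l_{A}$ and $r_{A}$, transpose to identities on $A^{*}$ that reduce, respectively, to \eqref{eq:bialg1} and to relations already implied by the \sctpla axioms \eqref{eq:SDPP} of $(A,\succ_{A},\prec_{A})$ together with \eqref{eq:bialg2}--\eqref{eq:bialg4} (equivalently, via Proposition \ref{lem:co}, to the coalgebra axioms \eqref{eq:co1}, \eqref{eq:co4}, \eqref{eq:co5} of $(A,\vartheta,\theta)$). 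Reading this dictionary in both directions gives the equivalence, whence the proposition.

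I expect the crux to be exactly this bookkeeping: six matched-pair equations must be matched against only four bialgebra equations, so part of the work is to verify that \eqref{eq:mp1}--\eqref{eq:mp3} impose nothing beyond \eqref{eq:bialg1}--\eqref{eq:bialg4}. The essential inputs collapsing the apparent redundancy are the anticommutativity of $\prec_{A}$ and $\prec_{A^{*}}$ and the defining identities \eqref{eq:SDPP} of \sctplas (with their duals \eqref{eq:co1}, \eqref{eq:co4}, \eqref{eq:co5}); the other delicate point is keeping the flip $\tau$ correctly placed when transposing the tensor-valued expressions.
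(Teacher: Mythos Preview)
Your approach is the same as the paper's: apply Lemma~\ref{lem:mp} with $B=A^{*}$, $l_{A}=L^{*}_{\circ_{A}}$, $r_{A}=-L^{*}_{\prec_{A}}$, $l_{B}=L^{*}_{\circ_{A^{*}}}$, $r_{B}=-L^{*}_{\prec_{A^{*}}}$, discharge the representation hypotheses via Corollary~\ref{cor:SDPP}, and then translate \eqref{eq:mp1}--\eqref{eq:mp6} into \eqref{eq:bialg1}--\eqref{eq:bialg4}.

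The difference lies only in the bookkeeping, and here the paper's organization is cleaner than what you outline. You expect four of the six matched-pair equations to correspond to the four bialgebra equations and the remaining two to be \emph{consequences} of the \sctpla/\sctplc axioms together with \eqref{eq:bialg2}--\eqref{eq:bialg4}. In fact no such derivation is needed: the paper observes directly that
\[
\eqref{eq:bialg1}\Longleftrightarrow\eqref{eq:mp1},\quad
\eqref{eq:bialg2}\Longleftrightarrow\eqref{eq:mp3}\Longleftrightarrow\eqref{eq:mp6},\quad
\eqref{eq:bialg3}\Longleftrightarrow\eqref{eq:mp4},\quad
\eqref{eq:bialg4}\Longleftrightarrow\eqref{eq:mp2}\Longleftrightarrow\eqref{eq:mp5}.
\]
The ``redundancy'' you anticipate is simply the $A\leftrightarrow A^{*}$ symmetry of the matched-pair setup: \eqref{eq:mp3} and \eqref{eq:mp6} (resp.\ \eqref{eq:mp2} and \eqref{eq:mp5}) are exchanged under this swap and hence dualize to the same bialgebra identity. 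Note also that your specific labeling (e.g.\ $\eqref{eq:mp4}\leftrightarrow\eqref{eq:bialg4}$) differs from the paper's; when you carry out the dualization carefully you should find the correspondence above.
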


\begin{proof}
    The conclusion follows from Lemma \ref{lem:mp} by taking
    $ l_{A}=
    L^{*}_{\circ_{A}},r_{A}=- L^{*}_{\prec_{A}}, l_{B}= L^{*}_{\circ_{A^{*}}},
    r_{B}=- L^{*}_{\prec_{A^{*}}}$. Note that in this case, we
    have
\begin{eqnarray*}
 \eqref{eq:bialg1} \Longleftrightarrow \eqref{eq:mp1},\;   \eqref{eq:bialg2}\Longleftrightarrow    \eqref{eq:mp3}\Longleftrightarrow\eqref{eq:mp6},\;
       \eqref{eq:bialg3} \Longleftrightarrow \eqref{eq:mp4},\;\eqref{eq:bialg4}\Longleftrightarrow\eqref{eq:mp2}
        \Longleftrightarrow\eqref{eq:mp5}.
    \end{eqnarray*}
Hence the conclusion follows.
\end{proof}

Now we study the equivalent characterization of \sctplbs in terms of the Manin triples.

\begin{defi}
\begin{enumerate}
\item Let $(A,\circ_{A})$ and $(A^{*},\circ_{A^{*}})$ be Leibniz
algebras. If there is a Leibniz algebra structure $(A\oplus
A^{*},\circ_{d})$ on $A\oplus A^{*}$ containing $(A,\circ_{A})$
and $( A^{*},\circ_{A^{*}})$ as Leibniz subalgebras, and the
natural nondegenerate symmetric bilinear form $\mathcal{B}_{d}$
defined by
 \begin{equation}\label{eq:bfds}
        \mathcal{B}_{d}(x+a^{*},y+b^{*})=\langle x,b^{*}\rangle+\langle a^{*},y\rangle,\;\forall x,y\in A, a^{*},b^{*}\in A^{*}
    \end{equation}
is left-invariant on $(A\oplus
A^{*},\circ_{d})$, then we say that
$\big(  (  A\oplus
A^{*},\circ_{d},\mathcal{B}_{d}),(A,\circ_{A}),
(A^{*},\circ_{A^{*}})\big)
$ is a {\bf Manin triple of Leibniz algebras associated to the nondegenerate
symmetric left-invariant bilinear form $\mathcal{B}_{d}$}.
\item Let $(A,\succ_{A},\prec_{A})$ and
$(A^{*},\succ_{A^{*}},\prec_{A^{*}})$ be \sctplas. If there is a
quadratic \sctpla structure $(A\oplus
A^{*},\succ_{d},\prec_{d},\mathcal{B}_{d})$ on $A\oplus A^{*}$
which contains $(A,\succ_{A},$
$\prec_{A})$ and $(
A^{*},\succ_{A^{*}},\prec_{A^{*}})$ as \sctplasubs, then we say that
$\big(  (  A\oplus
A^{*},\succ_{d},\prec_{d},$
$\mathcal{B}_{d}),
(A,\succ_{A},\prec_{A}),(A^{*},\succ_{A^{*}},\prec_{A^{*}})\big)
$ is a \textbf{Manin triple of \sctplas} (associated to the nondegenerate symmetric invariant bilinear form $\mathcal{B}_{d}$). \end{enumerate}
\label{d:manintypea}
\end{defi}

\begin{pro}\label{pro:equ}
There is a one-to-one correspondence between
Manin triples of Leibniz algebras associated to the nondegenerate
symmetric left-invariant bilinear forms and Manin triples of
\sctplas.
\end{pro}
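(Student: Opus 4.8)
The plan is to establish the correspondence by moving along the chain of equivalences already assembled in the preceding sections, rather than by a direct computation. The key observation is that Proposition~\ref{pro:330} gives a one-to-one correspondence between Leibniz algebras with nondegenerate symmetric left-invariant bilinear forms and quadratic \sctplas, via the explicit formulas \eqref{eq:cor3} and \eqref{eq:cor4}. So given a Manin triple $\big((A\oplus A^{*},\circ_{d},\mathcal{B}_{d}),(A,\circ_{A}),(A^{*},\circ_{A^{*}})\big)$ of Leibniz algebras associated to the nondegenerate symmetric left-invariant bilinear form $\mathcal{B}_{d}$, I would first apply Proposition~\ref{pro:330} to the big Leibniz algebra $(A\oplus A^{*},\circ_{d})$ with its form $\mathcal{B}_{d}$ to obtain a quadratic \sctpla structure $(A\oplus A^{*},\succ_{d},\prec_{d},\mathcal{B}_{d})$. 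The same proposition applied to the Leibniz subalgebras $(A,\circ_{A})$ and $(A^{*},\circ_{A^{*}})$ equipped with the forms induced by restriction —\,here one must first check these restricted forms are themselves nondegenerate and symmetric left-invariant, which is where the isotropy of $A$ and $A^{*}$ with respect to $\mathcal{B}_{d}$ together with $\dim A = \dim A^{*}$ is used\,— gives \sctpla structures on $A$ and $A^{*}$.

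Second, I would check that $(A,\succ_{A},\prec_{A})$ and $(A^{*},\succ_{A^{*}},\prec_{A^{*}})$ obtained this way are \sctplasubs of $(A\oplus A^{*},\succ_{d},\prec_{d})$, i.e.\ that the multiplications $\succ_{d},\prec_{d}$ restrict to the ones on $A$ and $A^{*}$. This is immediate from the defining formulas \eqref{eq:cor3}--\eqref{eq:cor4}: for $x,y,z\in A$, $\mathcal{B}_{d}(x\succ_{d}y,z) = -\mathcal{B}_{d}(y,x\circ_{d}z+z\circ_{d}x) = -\mathcal{B}_{d}(y,x\circ_{A}z+z\circ_{A}x)$ since $A$ is a subalgebra under $\circ_{d}$, and nondegeneracy of the restricted form on $A$ forces $x\succ_{d}y = x\succ_{A}y\in A$; similarly for $\prec$. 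Thus we obtain a Manin triple of \sctplas. Conversely, starting from a Manin triple of \sctplas $\big((A\oplus A^{*},\succ_{d},\prec_{d},\mathcal{B}_{d}),(A,\succ_{A},\prec_{A}),(A^{*},\succ_{A^{*}},\prec_{A^{*}})\big)$, Proposition~\ref{pro:330} (the converse direction) says that $\mathcal{B}_{d}$ is left-invariant on the sub-adjacent Leibniz algebra $(A\oplus A^{*},\circ_{d})$, and $(A,\circ_{A})$, $(A^{*},\circ_{A^{*}})$ are Leibniz subalgebras; the induced forms on $A$ and $A^{*}$ vanish (each is a totally isotropic complement), so we recover a Manin triple of Leibniz algebras associated to the nondegenerate symmetric left-invariant bilinear form $\mathcal{B}_{d}$.

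Finally I would verify these two constructions are mutually inverse. The composite, in either order, fixes the underlying data: the bilinear form $\mathcal{B}_{d}$ is never changed; the big Leibniz multiplication $\circ_{d}$ equals $\succ_{d}+\prec_{d}$ throughout by the $\mathcal{P}$-admissibility clause; and the passage \emph{Leibniz-with-form} $\rightsquigarrow$ \emph{quadratic \sctpla} and back is the identity by Proposition~\ref{pro:330}. Hence the correspondence is bijective. The main obstacle I anticipate is the bookkeeping around the restricted bilinear forms: one must confirm that, in a Manin triple of Leibniz algebras, the restriction of $\mathcal{B}_{d}$ to $A$ (and to $A^{*}$) is nondegenerate symmetric left-invariant so that Proposition~\ref{pro:330} applies to the subalgebras, and dually that in a Manin triple of \sctplas the subalgebras $A$, $A^{*}$ are isotropic for $\mathcal{B}_{d}$ — this uses that $\mathcal{B}_{d}$ pairs $A$ with $A^{*}$ via the canonical pairing \eqref{eq:bfds}, so $\mathcal{B}_{d}|_{A\times A} = 0$ automatically and likewise on $A^{*}$, and the full form is nondegenerate by construction. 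Once this is in place, everything else is formal.
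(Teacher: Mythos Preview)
Your overall strategy---apply Proposition~\ref{pro:330} to the ambient pair $(A\oplus A^*,\circ_d,\mathcal{B}_d)$, then check that $A$ and $A^*$ are closed under the resulting $\succ_d,\prec_d$---is the same as the paper's. But the argument contains an internal contradiction that breaks the key step.

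You propose to apply Proposition~\ref{pro:330} also to the \emph{subalgebras} $(A,\circ_A)$ and $(A^*,\circ_{A^*})$ equipped with the \emph{restricted} forms, and you say one must check that these restricted forms are nondegenerate. But as you yourself note in the final paragraph, $\mathcal{B}_d|_{A\times A}=0$ because $A$ is isotropic for $\mathcal{B}_d$. So the restricted form is identically zero, hence maximally degenerate, and Proposition~\ref{pro:330} cannot be applied to the subalgebras at all; there is no independent $\succ_A,\prec_A$ defined this way. Consequently the step ``nondegeneracy of the restricted form on $A$ forces $x\succ_d y=x\succ_A y\in A$'' is vacuous.

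The fix, which is what the paper does, is not to build $\succ_A,\prec_A$ first and compare, but simply to show that $A$ is \emph{closed} under $\succ_d,\prec_d$. Your own computation already gives what is needed: for $x,y,z\in A$,
\[
\mathcal{B}_d(x\prec_d y,z)=-\mathcal{B}_d(x,z\circ_d y)=-\mathcal{B}_d(x,z\circ_A y)=0,
\]
since $z\circ_A y\in A$ and $A$ is isotropic. Because $A$ is Lagrangian (isotropic of half the dimension), $A^\perp=A$, so $x\prec_d y\in A$; then $x\succ_d y=x\circ_A y-x\prec_d y\in A$ as well. One then \emph{defines} $\succ_A:=\succ_d|_{A\otimes A}$, $\prec_A:=\prec_d|_{A\otimes A}$, and similarly for $A^*$. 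The converse direction and the mutual-inverse check are as you describe.
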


\begin{proof}
    Let $\big(  (  A\oplus
    A^{*},\circ_{d},\mathcal{B}_{d}),(A,\circ_{A}),
    (A^{*},\circ_{A^{*}})\big)
    $ be a Manin triple of Leibniz algebras associated to the nondegenerate
    symmetric left-invariant bilinear form. Then  by Proposition \ref{pro:330}, there is a quadratic \sctpla  $(A\oplus A^{*},\succ_{d},\prec_{d},\mathcal{B})$ with multiplications $\succ_{d},\prec_{d}$ defined by
    \begin{eqnarray}
        && \mathcal{B}_{d}\big( (x+a^{*})\prec_{d}(y+b^{*}) ,z+c^{*}\big)=-\mathcal{B}_{d}\big( x+a^{*}, (z+c^{*})\circ_{d}(y+b^{*})\big),\label{eq:3603}\\
        &&(x+a^{*})\succ_{d}(y+b^{*})=
        (x+a^{*})\circ_{d}(y+b^{*})-(x+a^{*})\prec_{d}(y+b^{*}),\label{eq:3605}
    \end{eqnarray}
for all $ x,y,z\in A, a^{*},b^{*},c^{*}\in A^{*}.    $
   In particular, we have
\begin{eqnarray*}
    \mathcal{B}_{d}(x\prec_{d}y,z)
    =-\mathcal{B}_{d}(x,z\circ_{d}y)=0,\;\;\forall
    x,y,z\in A.
\end{eqnarray*}
Hence for all $x,y\in A$, 
we have $x\prec_{d}y\in A$, and $x\succ_{d}y=x\circ_{d}y-x\prec_{d}y
=x\circ_{A}y-x\prec_{d}y\in A$.
Thus $(A,\succ_{A},\prec_{A})$
$=
(A,\succ_{d}|_{A\otimes A},\prec_{d}|_{A\otimes A})$ is a subalgebra of $(A\oplus A^{*},\succ_{d},\prec_{d})$.
Similarly, $(A^{*},\succ_{A^{*}},\prec_{A^{*}})=
(A^{*},\succ_{d}|_{A^{*}\otimes A^{*}},\prec_{d}|_{A^{*}\otimes A^{*}})$ is also a subalgebra of $(A\oplus A^{*},\succ_{d},\prec_{d})$.
Hence $\big(  (  A\oplus
A^{*},\succ_{d},\prec_{d},\mathcal{B}_{d}),
(A,\succ_{A},$
$\prec_{A}),
(A^{*},\succ_{A^{*}},\prec_{A^{*}})\big)
$ is a  Manin triple of \sctplas.

Conversely, suppose that $\big(  (  A\oplus
A^{*},\succ_{d},\prec_{d},\mathcal{B}_{d}),
(A,\succ_{A},\prec_{A}), (A^{*},\succ_{A^{*}},\prec_{A^{*}})\big)
$ is a  Manin triple of \sctplas. Then $\big(  (  A\oplus
A^{*},\circ_{d},\mathcal{B}_{d}),
(A,\circ_{A}),(A^{*},\circ_{A^{*}})\big) $ is obviously  a Manin
triple of Leibniz algebras associated to the nondegenerate
symmetric left-invariant bilinear form, where $(A\oplus
A^{*},\circ_{d}),(A,\circ_{A})$ and $(A^{*},\circ_{A^{*}})$ are
the sub-adjacent Leibniz algebras of $(A\oplus A^{*},\succ_{d}$,
$\prec_{d})$, $(A,\succ_{A}$, $\prec_{A})$ and
$(A^{*},\succ_{A^{*}},\prec_{A^{*}})$ respectively.
\end{proof}

\begin{thm}\label{thm:Manin triple}
Let $(A,\succ_{A},\prec_{A})$ and
$(A^{*},\succ_{A^{*}},\prec_{A^{*}})$ be \sctplas and their sub-adjacent Leibniz algebras be $(A,\circ_{A})$ and
$(A^{*},\circ_{A^{*}})$ respectively. Then the
following conditions are equivalent:
\begin{enumerate}
    \item\label{E5}
    $(A,\succ_{A},\prec_{A},\vartheta,\theta)$ is a \sctplb, where $\vartheta,\theta:A\rightarrow A\otimes A$ are the linear
    duals of $\succ_{A^{*}}$ and $\prec_{A^{*}}$ respectively.
\item\label{E4} There is a Manin triple $\big(  (  A\oplus
A^{*},\circ_{d},\mathcal{B}_{d}),(A,\circ_{A}),
(A^{*},\circ_{A^{*}})\big)
$ of Leibniz algebras associated to the nondegenerate symmetric
left-invariant bilinear form such that the compatible \sctpla
$(A\oplus A^{*},\succ_{d},\prec_{d})$ induced
from $\mathcal{B}_{d}$ contains
$(A,\succ_{A},\prec_{A})$ and
$(A^{*},\succ_{A^{*}},\prec_{A^{*}})$ as
\sctplasubs.
\item\label{E1} There is a Manin triple
$\big((A\oplus
A^{*},\succ_{d},\prec_{d},\mathcal{B}_{d}),
(A,\succ_{A},\prec_{A}),
(A^{*},\succ_{A^{*}},\prec_{A^{*}})\big)$ of \sctplas.
\item\label{E2} There is
a Leibniz algebra $(A\oplus A^{*},\circ_{d})$ with $ \circ_{d} $
defined by \eqref{eq:A ds}.
\item\label{E3} There is
a \sctpla  $(A\oplus A^{*},\succ_{d},\prec_{d})$
with $\succ_{d},\prec_{d}$ respectively defined by
\begin{eqnarray}
 (x+a^{*})\succ_{d}(y+b^{*})&=&x\succ_{A}y
+( L^{*}_{\succ_{A^{*}}}+ R^{*}_{\succ_{A^{*}}})(a^{*})y
- R^{*}_{\succ_{A^{*}}}(b^{*})x\nonumber\\
&&+a^{*}\succ_{A^{*}}b^{*}+( L^{*}_{\succ_{A}}
+ R^{*}_{\succ_{A}})(x)b^{*}- R^{*}_{\succ_{A}}(y)a^{*},\label{eq:A ds1}\\
(x+a^{*})\prec_{d}(y+b^{*})&=&x\prec_{A}y
- R^{*}_{\circ_{A^{*}}}(a^{*})y
+ R^{*}_{\circ_{A^{*}}}(b^{*})x\nonumber\\
&&+a^{*}\prec_{A^{*}}b^{*}
- R^{*}_{\circ_{A}}(x)b^{*}
+ R^{*}_{\circ_{A}}(y)a^{*},\;\forall x,y\in A, a^{*},b^{*}\in A^{*}.\label{eq:A ds2}
\end{eqnarray}
\end{enumerate}
\end{thm}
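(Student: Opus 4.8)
The plan is to establish the five conditions equivalent along the chain \eqref{E5} $\Leftrightarrow$ \eqref{E2} $\Leftrightarrow$ \eqref{E3} $\Leftrightarrow$ \eqref{E1} $\Leftrightarrow$ \eqref{E4}, reducing each link either to an already established result or to a direct verification with the explicit formulas. Two of the links are immediate: the equivalence \eqref{E5} $\Leftrightarrow$ \eqref{E2} is exactly the content of Proposition~\ref{pro:2-2}, and \eqref{E1} $\Leftrightarrow$ \eqref{E4} follows from Proposition~\ref{pro:equ}, since the correspondence there is realized by passing between a Leibniz algebra with a nondegenerate symmetric left-invariant bilinear form and the associated quadratic \sctpla via Proposition~\ref{pro:330}, a passage that preserves subalgebras; hence the clause in \eqref{E4} singling out $(A,\succ_{A},\prec_{A})$ and $(A^{*},\succ_{A^{*}},\prec_{A^{*}})$ as \sctplasubs matches the requirement in the Manin triple of \sctplas in \eqref{E1}.

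For \eqref{E2} $\Leftrightarrow$ \eqref{E3}: assuming \eqref{E2}, I would first check, using only the definition~\eqref{eq:dual} of the dual maps, the relations $\circ_{A}=\succ_{A}+\prec_{A}$, $\circ_{A^{*}}=\succ_{A^{*}}+\prec_{A^{*}}$, and the anticommutativity of $\prec_{A}$ and $\prec_{A^{*}}$, that $\mathcal{B}_{d}$ of~\eqref{eq:bfds} is a nondegenerate symmetric left-invariant bilinear form on the Leibniz algebra $(A\oplus A^{*},\circ_{d})$. Corollary~\ref{cor:quadratic1} then produces a compatible \sctpla $(A\oplus A^{*},\succ_{d},\prec_{d})$ with $\succ_{d},\prec_{d}$ given by~\eqref{eq:cor3}--\eqref{eq:cor4}, and evaluating these on an argument $z+c^{*}$ of $A\oplus A^{*}$ and reading off $A$- and $A^{*}$-components reproduces~\eqref{eq:A ds1}--\eqref{eq:A ds2} (here one uses $R^{*}_{\prec_{A}}=-L^{*}_{\prec_{A}}$, $R^{*}_{\prec_{A^{*}}}=-L^{*}_{\prec_{A^{*}}}$, and $\bullet_{A^{*}}=\circ_{A^{*}}$, all consequences of the anticommutativity of $\prec$). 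Conversely, if \eqref{E3} holds, the sub-adjacent Leibniz multiplication of $(A\oplus A^{*},\succ_{d},\prec_{d})$ is $\succ_{d}+\prec_{d}$, and summing~\eqref{eq:A ds1} and~\eqref{eq:A ds2} and collecting the cross terms yields~\eqref{eq:A ds}, so \eqref{E2} holds.

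For \eqref{E3} $\Leftrightarrow$ \eqref{E1}: if \eqref{E3} holds, I would check that $\mathcal{B}_{d}$ is a nondegenerate symmetric invariant bilinear form on the \sctpla $(A\oplus A^{*},\succ_{d},\prec_{d})$, i.e.\ that~\eqref{eq:cor4} holds for $\prec_{d}$ and $\circ_{d}$, and that deleting the $A^{*}$-components (resp.\ $A$-components) from~\eqref{eq:A ds1}--\eqref{eq:A ds2} returns $(\succ_{A},\prec_{A})$ (resp.\ $(\succ_{A^{*}},\prec_{A^{*}})$), so that $A$ and $A^{*}$ are \sctplasubs; this is a Manin triple of \sctplas. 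The reverse implication \eqref{E1} $\Rightarrow$ \eqref{E3} is the main obstacle. Given a Manin triple of \sctplas, Proposition~\ref{pro:330} shows $\mathcal{B}_{d}$ is left-invariant on the sub-adjacent Leibniz algebra $(A\oplus A^{*},\circ_{d})$ and that $\succ_{d},\prec_{d}$ are recovered from $\mathcal{B}_{d}$ via~\eqref{eq:cor3}--\eqref{eq:cor4}; by \eqref{E2} $\Leftrightarrow$ \eqref{E3} it then suffices to show $\circ_{d}$ equals~\eqref{eq:A ds}. Since $A$ and $A^{*}$ are subalgebras, $x\circ_{d}y=x\circ_{A}y$ and $a^{*}\circ_{d}b^{*}=a^{*}\circ_{A^{*}}b^{*}$; for a mixed product $x\circ_{d}b^{*}$ with $x\in A,\ b^{*}\in A^{*}$, pairing with $z\in A$ and using left-invariance together with $x\circ_{d}z=x\circ_{A}z$ forces its $A^{*}$-component to be $L^{*}_{\circ_{A}}(x)b^{*}$, while evaluating the defining relation~\eqref{eq:cor4} of $\prec_{d}$ on $A^{*}\otimes A^{*}$ (where $\prec_{d}$ coincides with $\prec_{A^{*}}$) against $x$ and using the anticommutativity of $\prec_{A^{*}}$ forces its $A$-component to be $-L^{*}_{\prec_{A^{*}}}(b^{*})x$; the mixed product $a^{*}\circ_{d}y$ is handled symmetrically, reconstructing~\eqref{eq:A ds}. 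I expect the component-and-sign bookkeeping in this last step, together with the left-invariance verification in \eqref{E2} $\Rightarrow$ \eqref{E3}, to be the only delicate parts, and both are purely mechanical.
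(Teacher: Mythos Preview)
Your proposal is correct and uses essentially the same ingredients as the paper: Proposition~\ref{pro:2-2} for \eqref{E5}$\Leftrightarrow$\eqref{E2}, Proposition~\ref{pro:equ} for \eqref{E1}$\Leftrightarrow$\eqref{E4}, the left-invariance check for $\mathcal{B}_d$ on $(A\oplus A^*,\circ_d)$, and the bilinear-form relations \eqref{eq:cor3}--\eqref{eq:cor4} to pin down the cross terms. The only difference is organizational: the paper closes the remaining equivalences by the cycle \eqref{E2}$\Rightarrow$\eqref{E1}$\Rightarrow$\eqref{E3}$\Rightarrow$\eqref{E2}, whereas you run \eqref{E2}$\Leftrightarrow$\eqref{E3}$\Leftrightarrow$\eqref{E1}. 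In particular, for \eqref{E1}$\Rightarrow$\eqref{E3} the paper computes $x\succ_d b^*$ and $x\prec_d b^*$ directly from \eqref{eq:cor3} and \eqref{eq:cor3.37} (available via Lemma~\ref{lem:337}), while you first reconstruct $\circ_d$ as \eqref{eq:A ds} from left-invariance together with \eqref{eq:cor4} and then invoke your already-established \eqref{E2}$\Rightarrow$\eqref{E3}; both routes amount to the same short pairing computations.
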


\begin{proof}
    (\ref{E5})$\Longleftrightarrow$(\ref{E2}) It follows from Proposition
    \ref{pro:2-2}.

(\ref{E1})$\Longleftrightarrow$(\ref{E4}) It follows from
Proposition \ref{pro:equ}.

(\ref{E2})$\Longrightarrow$(\ref{E1}) Suppose that there is a
Leibniz algebra $(A\oplus A^{*},\circ_{d})$ with $\circ_{d}$
defined by \eqref{eq:A ds}. Then it is straightforward to check
that $\mathcal{B}_{d}$ is left-invariant on $(A\oplus
A^{*},\circ_{d})$. By Proposition \ref{pro:330}, there is a
quadratic \sctpla $(A\oplus
A^{*},\succ_{d},\prec_{d},\mathcal{B}_{d})$ with
$\succ_{d},\prec_{d}$ defined by \eqref{eq:cor3} and
\eqref{eq:cor4} respectively. It is also straightforward to show
that
$$x\succ_{d}y,\; x\prec_{d}y\in A,\;a^{*}\succ_{d}b^{*},\;a^{*}\prec_{d}b^{*}\in A^{*},\;\forall x,y\in A,
a^{*},b^{*}\in A^{*}.$$
Hence $(A,\succ_{A}=\succ_{d}|_{A\otimes A},\prec_{A}=\prec_{d}|_{A\otimes A})$ and
$(A^{*},\succ_{A^{*}}=\succ_{d}|_{A^{*}\otimes A^{*}},\prec_{A^{*}}=\prec_{d}|_{A^{*}\otimes A^{*}})$ are \sctplasubs of
$(A\oplus A^{*},\succ_{d},\prec_{d})$. Therefore $\big((A\oplus
A^{*},\succ_{d}$, $\prec_{d},\mathcal{B}_{d}),
(A,\succ_{A},\prec_{A}), (A^{*},\succ_{A^{*}},\prec_{A^{*}})\big)
$ is a Manin triple of \sctplas.

(\ref{E1})$\Longrightarrow$(\ref{E3})  Let $x,y\in A,
a^{*},b^{*}\in A^{*}$. By the assumption, we have
\begin{eqnarray*}
    \mathcal{B}_{d}(x\succ_{d} b^{*},y)&\overset{\eqref{eq:cor3}}{=}&-\mathcal{B}
    _{d}(b^{*},x\circ_{A}y+y\circ_{A}x)=
    \mathcal{B}_{d}\big(( L
    ^{*}_{\succ_{A}}+ R
    ^{*}_{\succ_{A}})(x)b^{*},y\big),\\
    \mathcal{B}_{d}(x\succ_{d} b^{*},a^{*})&\overset{\eqref{eq:cor3.37}}{=}&
    \mathcal{B}_{d}(x,a^{*}\succ_{A^{*}}b^{*})=\langle x,
    a^{*}\succ_{A^{*}}b^{*}\rangle
    =-\mathcal{B}_{d}\big( R
    ^{*}_{\succ_{A^{*}}}(b^{*})x,a^{*}\big).
\end{eqnarray*}
 Thus we have
\begin{eqnarray*}
x\succ_{d} b^{*}=( L ^{*}_{\succ_{A}}+ R
^{*}_{\succ_{A}})(x)b^{*}- R ^{*}_{\succ_{A^{*}}}(b^{*})x.
\end{eqnarray*}
Similarly we have
\begin{eqnarray*}
	a^{*}\succ_{d} y=( L
	^{*}_{\succ_{A^{*}}}+ R ^{*}_{\succ_{A^{*}}})(a^{*})y- R
	^{*}_{\succ_{A}}(y)a^{*},\; x\prec_{d} b^{*}=
	R^{*}_{\circ_{A^{*}}} (b^{*})x- R^{*}_{\circ_{A}} (x)b^{*}.
\end{eqnarray*}
Hence \eqref{eq:A ds1} and  \eqref{eq:A ds2} hold.

(\ref{E3})$\Longrightarrow$(\ref{E2}) It follows from a
straightforward verification.
\end{proof}

\subsection{From averaging Lie bialgebras to \sctplbs}\

Recall that
a {\bf Lie coalgebra} \cite{Cha} is a pair $(A,\delta)$, where $A$ is a vector space and $\delta:A\rightarrow A\otimes A$ is a co-multiplication such that the following equations hold:
    \begin{equation}\label{eq:coLie} \delta=-\tau\delta,\;(\mathrm{id}+\sigma+\sigma^{2})(\mathrm{id}\otimes \delta)\delta=0,
    \end{equation}
    where 
 $\sigma(x\otimes y\otimes z)=z\otimes x\otimes y$ for all $x,y,z\in A$.
    A {\bf Lie bialgebra} \cite{Cha} is a triple $(A,[-,-]_{A},\delta)$ such that $(A,[-,-]_{A})$ is a Lie algebra, $(A,\delta)$ is a Lie coalgebra and the following equation holds:
        \begin{equation}\label{eq:bib}          \delta([x,y]_{A})=\big(\mathrm{ad}_{A}(x)\otimes\mathrm{id}
            +\mathrm{id}\otimes\mathrm{ad}_{A}(x)\big)\delta(y)
            -\big(\mathrm{ad}_{A}(y)\otimes\mathrm{id}      +\mathrm{id}\otimes\mathrm{ad}_{A}(y)\big)\delta(x),\;\forall x,y\in A.
        \end{equation}

Now we introduce the notions of an averaging Lie coalgebra and an averaging Lie bialgebra.

\begin{defi}
    An {\bf averaging Lie coalgebra} is a triple $(A,\delta,Q)$ such that $(A,\delta)$ is a Lie coalgebra and $Q:A\rightarrow A$ is a linear map satisfying the following equation:
    \begin{eqnarray}\label{eq:aoco1}
    (Q\otimes Q)\delta(x)=(Q\otimes\mathrm{id})\delta\big(Q(x)\big),\;\forall x\in A.
    \end{eqnarray}
    An {\bf admissible averaging Lie coalgebra} is a quadruple $(A,\delta,Q,P)$
    such that $(A,\delta,Q)$ is an averaging Lie coalgebra and $P:A\rightarrow A$ is a linear map satisfying the following equation:
     \begin{eqnarray} \label{eq:aoco2}
        (Q\otimes P)\delta(x)=(Q\otimes\mathrm{id})\delta\big(P(x)\big)=(\mathrm{id}\otimes P)\delta\big(P(x)\big),\;\forall x\in A.
    \end{eqnarray}
\end{defi}

\begin{pro}\label{pro:admi ave coalg}
    Let $A$ be a vector space, $\delta:A\rightarrow A\otimes A$ be a co-multiplication and  $P,Q:A\rightarrow A$ be linear maps.
    Let $[-,-]_{A^{*}}:A^{*}\otimes A^{*}\rightarrow A^{*}$ be the linear dual of
    $\delta$.
Then $(A^{*},[-,-]_{A^{*}},Q^* )$ is an  averaging Lie algebra if
and only if $(A,\delta,Q )$ is an averaging Lie coalgebra.
Moreover, $(A^{*},[-,-]_{A^{*}},Q^*,P^*)$ is an admissible
averaging Lie algebra if and only if $(A,\delta,Q,P)$ is an
admissible averaging Lie coalgebra.
\end{pro}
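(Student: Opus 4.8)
The plan is to prove both equivalences by dualizing the defining identities term by term. First recall the classical fact that the linear dual of a co-multiplication $\delta$ endows $A^{*}$ with a Lie algebra structure $(A^{*},[-,-]_{A^{*}})$ precisely when $(A,\delta)$ is a Lie coalgebra in the sense of \eqref{eq:coLie}; consequently both statements reduce to translating the extra averaging data once the underlying Lie algebra and Lie coalgebra structures are matched up. Throughout, $A$ being finite-dimensional, the pairing $A^{*}\otimes A^{*}\times A\otimes A\to\mathbb{K}$ is nondegenerate, the bracket is characterized by $\langle a^{*}\otimes b^{*},\delta(x)\rangle=\langle [a^{*},b^{*}]_{A^{*}},x\rangle$, and $Q^{*}$ (resp. $P^{*}$) is the plain linear dual, determined by $\langle Q^{*}(a^{*}),x\rangle=\langle a^{*},Q(x)\rangle$ for all $a^{*}\in A^{*},x\in A$.

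For the first equivalence, I would fix $a^{*},b^{*}\in A^{*}$, $x\in A$ and compute
\[
\langle [Q^{*}(a^{*}),Q^{*}(b^{*})]_{A^{*}},x\rangle=\langle Q^{*}(a^{*})\otimes Q^{*}(b^{*}),\delta(x)\rangle=\langle a^{*}\otimes b^{*},(Q\otimes Q)\delta(x)\rangle,
\]
and likewise
\[
\langle Q^{*}\big([Q^{*}(a^{*}),b^{*}]_{A^{*}}\big),x\rangle=\langle [Q^{*}(a^{*}),b^{*}]_{A^{*}},Q(x)\rangle=\langle a^{*}\otimes b^{*},(Q\otimes\mathrm{id})\delta(Q(x))\rangle.
\]
Since $a^{*},b^{*},x$ are arbitrary and the pairing is nondegenerate, the averaging operator condition \eqref{eq:Ao} for $Q^{*}$ on $(A^{*},[-,-]_{A^{*}})$ is equivalent to $(Q\otimes Q)\delta(x)=(Q\otimes\mathrm{id})\delta(Q(x))$ for all $x\in A$, i.e. to \eqref{eq:aoco1}. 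This proves the first claim.

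For the second equivalence, by Definition \ref{def:aaLg} the quadruple $(A^{*},[-,-]_{A^{*}},Q^{*},P^{*})$ is an admissible averaging Lie algebra if and only if $[Q^{*}(a^{*}),P^{*}(b^{*})]_{A^{*}}=P^{*}\big([Q^{*}(a^{*}),b^{*}]_{A^{*}}\big)=P^{*}\big([a^{*},P^{*}(b^{*})]_{A^{*}}\big)$ for all $a^{*},b^{*}\in A^{*}$. Pairing the three expressions with an arbitrary $x\in A$ exactly as above produces $\langle a^{*}\otimes b^{*},(Q\otimes P)\delta(x)\rangle$, $\langle a^{*}\otimes b^{*},(Q\otimes\mathrm{id})\delta(P(x))\rangle$, and $\langle a^{*}\otimes b^{*},(\mathrm{id}\otimes P)\delta(P(x))\rangle$ respectively; so by nondegeneracy this condition is equivalent to \eqref{eq:aoco2}. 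Combined with the first equivalence, which already matches the Lie algebra and Lie coalgebra structures, this yields the second claim. The computations are entirely routine; the only point needing attention is the consistent bookkeeping of the two dual conventions in the paper — the plain dual used for the endomorphisms $P,Q$ versus the sign-twisted dual \eqref{eq:dual} used for representations — together with the choice of sign in $\langle a^{*}\otimes b^{*},\delta(x)\rangle=\langle [a^{*},b^{*}]_{A^{*}},x\rangle$ making the classical duality valid; no genuine difficulty arises.
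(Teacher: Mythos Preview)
Your proof is correct and is precisely the ``straightforward checking'' the paper alludes to: term-by-term dualization of the averaging and admissibility identities via the nondegenerate pairing. Your care in distinguishing the plain linear dual of $P,Q$ from the sign-twisted convention \eqref{eq:dual} is well placed and matches the paper's conventions.
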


\begin{proof}
It follows from a straightforward checking.
\end{proof}

\begin{defi}
    An {\bf averaging Lie bialgebra} is a vector space $A$ together with linear maps
    \begin{equation*}
        [-,-]_{A}:A\otimes A\rightarrow A,\; \delta:A\rightarrow A\otimes A, \; P,Q:A\rightarrow A
    \end{equation*}
    such that the following conditions are satisfied:
    \begin{enumerate}
        \item the triple $(A,[-,-]_{A},\delta)$ is a Lie bialgebra.
        \item \eqref{eq:Ao} and \eqref{eq:ao pair} hold such that $(A,[-,-]_{A},P,Q)$ is an admissible averaging Lie algebra.
            \item \eqref{eq:aoco1} and \eqref{eq:aoco2} hold such that $(A,\delta,Q,P)$ is an admissible averaging Lie coalgebra.
    \end{enumerate}
    We denote it by $(A,[-,-]_{A},\delta,P,Q)$.
\end{defi}

\begin{pro}\label{pro:5.9}
    Let $(A,[-,-]_{A},\delta,P,Q)$ be an averaging Lie bialgebra.
    Then there is a \sctplb $(A,\succ_{A},\prec_{A},\vartheta,\theta)$, where $\succ_{A},\prec_{A}$ are defined by \eqref{eq:com asso and SDPP} and $\vartheta,\theta$ are defined by
    \begin{equation}\label{eq:co ao}
        \vartheta(x):=(Q\otimes\mathrm{id})\delta(x)-\delta(Px),\; \theta(x):=\delta(Px),\;\forall x\in A.
    \end{equation}
\end{pro}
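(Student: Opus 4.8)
The plan is to verify the two structural requirements separately and then, by Proposition \ref{pro:2-2} (or equivalently Theorem \ref{thm:Manin triple}), combine them into a \sctplb. First I would invoke Proposition \ref{pro:com asso and SDPP}: since $(A,[-,-]_{A},P,Q)$ is an admissible averaging Lie algebra, the multiplications $\succ_{A},\prec_{A}$ defined by \eqref{eq:com asso and SDPP} make $(A,\succ_{A},\prec_{A})$ a \sctpla, with sub-adjacent Leibniz algebra $(A,\circ_{A})$ given by $x\circ_A y=[P(x),y]_A$. Dually, since $(A,\delta,Q,P)$ is an admissible averaging Lie coalgebra, Proposition \ref{pro:admi ave coalg} says $(A^{*},[-,-]_{A^{*}},Q^{*},P^{*})$ is an admissible averaging Lie algebra, so applying Proposition \ref{pro:com asso and SDPP} to it produces multiplications $\succ_{A^{*}},\prec_{A^{*}}$ on $A^{*}$ making $(A^{*},\succ_{A^{*}},\prec_{A^{*}})$ a \sctpla; one checks directly from \eqref{eq:com asso and SDPP} applied on $A^{*}$ that these are precisely the linear duals of $\vartheta$ and $\theta$ as defined by \eqref{eq:co ao} (the first summand $[Q^{*}(a^{*}),b^{*}]_{A^{*}}-P^{*}([a^{*},b^{*}]_{A^{*}})$ dualizes to $(Q\otimes\mathrm{id})\delta(x)-\delta(Px)$, and the second to $\delta(Px)$). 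Consequently $(A,\vartheta,\theta)$ is a \sctplc by Proposition \ref{lem:co}.

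With both \sctplas in hand and the duality relation established, the remaining task is to show that $(A,\succ_{A},\prec_{A},\vartheta,\theta)$ satisfies the compatibility conditions \eqref{eq:bialg1}--\eqref{eq:bialg4}. By Proposition \ref{pro:2-2}, this is equivalent to showing that the bracket $\circ_{d}$ on $A\oplus A^{*}$ defined by \eqref{eq:A ds} is a Leibniz bracket. The key observation here is that $\circ_{d}$ on $A\oplus A^{*}$ should be exactly the Leibniz bracket induced, via Proposition \ref{ex:Lie aver}, from an averaging operator on a Lie algebra structure on $A\oplus A^{*}$. Concretely: since $(A,[-,-]_{A},\delta)$ is a Lie bialgebra, there is the classical Drinfeld double $(A\oplus A^{*},[-,-]_{D})$, a Lie algebra whose bracket is $[x+a^{*},y+b^{*}]_{D}=[x,y]_A+\mathrm{ad}^{*}_{A^{*}}(a^{*})y-\mathrm{ad}^{*}_{A^{*}}(b^{*})x+[a^{*},b^{*}]_{A^{*}}+\mathrm{ad}^{*}_{A}(x)b^{*}-\mathrm{ad}^{*}_{A}(y)a^{*}$. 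I would then show that $P+Q^{*}:A\oplus A^{*}\rightarrow A\oplus A^{*}$, $(P+Q^{*})(x+a^{*})=P(x)+Q^{*}(a^{*})$, is an averaging operator on $(A\oplus A^{*},[-,-]_{D})$, and that the induced Leibniz bracket $(x+a^{*})\circ_{d}(y+b^{*})=[(P+Q^{*})(x+a^{*}),\,y+b^{*}]_{D}$ coincides with \eqref{eq:A ds}. The matching of the six terms is a direct computation using $\mathrm{ad}^{*}_{A}(P(x))=L^{*}_{\circ_A}(x)$ (from $x\circ_A y=[P(x),y]_A$) and the analogous identities on $A^{*}$, together with the definitions of $\prec_A,\prec_{A^{*}}$.

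The main obstacle is verifying that $P+Q^{*}$ is genuinely an averaging operator on the Drinfeld double, i.e.\ $[(P+Q^{*})(\xi),(P+Q^{*})(\zeta)]_{D}=(P+Q^{*})([(P+Q^{*})(\xi),\zeta]_{D})$ for all $\xi,\zeta\in A\oplus A^{*}$. Expanding this with $\xi=x+a^{*}$, $\zeta=y+b^{*}$ produces four types of terms (in $A$, in $A^{*}$, and two cross terms valued in the coadjoint actions). The $A$-valued and $A^{*}$-valued terms reduce to the averaging conditions \eqref{eq:Ao} for $P$ and for $Q^{*}$ (the latter being \eqref{eq:aoco1} dualized). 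The genuinely new input is the pair of cross terms, which force exactly the admissibility conditions \eqref{eq:ao pair} for $(P,Q)$ and \eqref{eq:aoco2} for $(Q,P)$ on the coalgebra side; one must also use the Lie bialgebra $1$-cocycle condition \eqref{eq:bib} to handle the mixed terms where a coadjoint action hits a bracket. This is a somewhat lengthy but mechanical check; I would organize it by collecting the components in $A$, in $A^{*}$, and the two coadjoint-action components, and matching each against one of the four defining conditions of an averaging Lie bialgebra. Once $\circ_{d}$ is shown to be Leibniz, Proposition \ref{pro:2-2} immediately yields that $(A,\succ_{A},\prec_{A},\vartheta,\theta)$ is a \sctplb, completing the proof.
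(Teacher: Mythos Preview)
Your approach is correct but differs from the paper's. The paper, after establishing the two \sctpla structures and the duality between $(\vartheta,\theta)$ and $(\succ_{A^*},\prec_{A^*})$ exactly as you do, verifies the compatibility conditions \eqref{eq:bialg1}--\eqref{eq:bialg4} \emph{directly}: it rewrites each side using the operator identities $L_{\circ_A}(x)=\mathrm{ad}_A(P(x))$, $L_{\prec_A}(x)=Q\,\mathrm{ad}_A(x)$, $\eta=(Q\otimes\mathrm{id})\delta$, $\theta=\delta P$, and then reduces each equation to a consequence of \eqref{eq:bib}, \eqref{eq:aoco1}, \eqref{eq:aoco2} and \eqref{eq:coLie}. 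Your route instead passes through the Drinfeld double: you show $P+Q^{*}$ is an averaging operator on $(A\oplus A^{*},[-,-]_D)$ and that the induced Leibniz bracket coincides with \eqref{eq:A ds}, then invoke Proposition~\ref{pro:2-2}. This is precisely the content of Theorem~\ref{thm:2.11} combined with the bracket-matching in Proposition~\ref{pro:5.2}, i.e.\ the alternative path in the commutative diagram at the end of the paper. What your route buys is conceptual clarity (the bialgebra compatibilities are packaged into a single averaging condition on the double); what the paper's direct computation buys is independence from the Manin-triple machinery, so that Proposition~\ref{pro:5.9} stands on its own before Theorem~\ref{thm:2.11} is proved.

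One small correction: in the averaging-operator check on the double you do \emph{not} need the cocycle condition \eqref{eq:bib} separately. That condition is already absorbed into the statement that $[-,-]_D$ is a Lie bracket; the comparison of $[(P+Q^{*})\xi,(P+Q^{*})\zeta]_D$ with $(P+Q^{*})[(P+Q^{*})\xi,\zeta]_D$ reduces, component by component, exactly to \eqref{eq:Ao}, \eqref{eq:ao pair}, \eqref{eq:aoco1} and \eqref{eq:aoco2}, as the paper's proof of Theorem~\ref{thm:2.11} shows.
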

\begin{proof}
By Propositions \ref{pro:com asso and SDPP} and \ref{pro:admi ave coalg}, $(A,\succ_{A},\prec_{A})$ with $\succ_{A},\prec_{A}$ defined by \eqref{eq:com asso and SDPP} and
    $(A^{*},\succ_{A^{*}}$,
$\prec_{A^{*}})$ with $ \succ_{A^{*}},\prec_{A^{*}} $ defined by
\begin{eqnarray}\label{eq:mp re2}
a^{*}\succ_{A^{*}}b^{*}=[Q^{*}(a^{*}),b^{*}]_{A^{*}}
-P^{*}([a^{*},b^{*}]_{A^{*}}),\; a^{*}\prec_{A^{*}}b^{*}=P^{*}([a^{*},b^{*}]_{A^{*}}),\;\forall a^{*},b^{*}\in A^{*}
\end{eqnarray}
respectively are \sctplas.
   Note that the
    linear duals $\vartheta,\theta:A\rightarrow A\otimes A$ of
    $\succ_{A^{*}}$ and $\prec_{A^{*}}$ satisfy the following equations:
        \begin{eqnarray*}
        \langle \theta(x), a^{*}\otimes b^{*}\rangle&=&\langle x, a^{*}\prec_{A^{*}}b^{*}\rangle\overset{\eqref{eq:mp re2}}{=}\langle x, P^{*}([a^{*},b^{*}]_{A^{*}}) \rangle=\langle \delta(Px),a^{*}\otimes b^{*}\rangle,\\
        \langle \vartheta(x), a^{*}\otimes b^{*}\rangle&=&\langle x, a^{*}\succ_{A^{*}}b^{*}\rangle\overset{\eqref{eq:mp re2}}{=}\langle x, [Q^{*}(a^{*}),b^{*}]_{A^{*}}-P^{*}([a^{*},b^{*}]_{A^{*}})\rangle\\
        &=&\langle (Q\otimes\mathrm{id})\delta(x)-\delta(Px),a^{*}\otimes b^{*}\rangle,\;\;\forall x\in A, a^{*},b^{*}\in A^{*}.
    \end{eqnarray*}
Therefore,   $\vartheta,\theta$ are exactly defined by \eqref{eq:co ao}.

By Proposition \ref{lem:co}, $(A,\vartheta,\theta)$ is a \sctplc.
    By \eqref{eq:Leibniz from aver op}, \eqref{eq:com asso and SDPP} and \eqref{eq:mp re2}, we have
 \begin{eqnarray*}
     L_{\succ_{A}}(y)
    =\mathrm{ad}_{A}\big(P(y)\big)-Q\mathrm{ad}_{A}(y),\;
    R_{\succ_{A}}(y) =Q\mathrm{ad}_{A}(y)-\mathrm{ad}_{A}(y)P,\;
    L_{\prec_{A}}(y)
    =- R_{\prec_{A}}(y)=Q\mathrm{ad}_{A}(y),
\end{eqnarray*}
for all $y\in A$.
    Therefore we have
    \begin{align*}
        &\eta(x\prec_{A}y)=(Q\otimes\mathrm{id})\delta(x\prec_{A}y)
        \overset{\eqref{eq:com asso and SDPP}}{=}(Q\otimes\mathrm{id})\delta\big(Q([x,y]_{A})\big)
        \overset{\eqref{eq:aoco1}}{=}(Q\otimes Q)\delta([x,y]_{A})\\
        &\overset{\eqref{eq:bib}}{=}(Q\otimes Q)\big(\mathrm{ad}_{A}(x)\otimes\mathrm{id}
        +\mathrm{id}\otimes\mathrm{ad}_{A}(x)\big)\delta(y)
        -(Q\otimes Q)\big(\mathrm{ad}_{A}(y)\otimes\mathrm{id}+
        \mathrm{id}\otimes\mathrm{ad}_{A}(y)\big)\delta(x),\\
        &(\mathrm{id}\otimes\mathrm{id}-\tau)\big(\mathrm{id}\otimes L_{\prec_{A}}(x)\big)\eta(y)
        =(\mathrm{id}\otimes\mathrm{id}-\tau)\big(\mathrm{id}\otimes L_{\prec_{A}}(x)\big)(Q\otimes\mathrm{id})\delta(y)\\
        &=(\mathrm{id}\otimes\mathrm{id}-\tau)\big(\mathrm{id}\otimes Q\mathrm{ad}_{A}(x)\big)(Q\otimes\mathrm{id})\delta(y)
        =(\mathrm{id}\otimes\mathrm{id}-\tau)(Q\otimes Q)\big(\mathrm{id}\otimes \mathrm{ad}_{A}(x)\big)\delta(y)\\
        &\overset{\eqref{eq:coLie}}{=}(Q\otimes Q)\big(\mathrm{ad}_{A}(x)\otimes\mathrm{id}
        +\mathrm{id}\otimes\mathrm{ad}_{A}(x)\big)\delta(y).
    \end{align*}
    Thus \eqref{eq:bialg1} holds. Similarly, \eqref{eq:bialg2}-\eqref{eq:bialg4} hold. Therefore  $(A,\succ_{A},\prec_{A},\vartheta,\theta)$ is a \sctplb.
\end{proof}

In the following, we shall give an interpretation of Proposition \ref{pro:5.9} in terms of Manin triples.

\begin{defi}\cite{Cha}
Let $(A,[-,-]_{A})$ and $(A^{*},[-,-]_{A^{*}})$ be Lie algebras.
If there is a Lie algebra $(A\oplus A^{*},[-,-]_{d})$ which contains $(A,[-,-]_{A})$ and $(A^{*},[-,-]_{A^{*}})$ as Lie subalgebras, and the natural nondegenerate symmetric bilinear form $\mathcal{B}_{d}$ given by \eqref{eq:bfds} is invariant on $(A\oplus A^{*},[-,-]_{d})$, then we say that $\big((A\oplus A^{*},[-,-]_{d},\mathcal{B}_{d}),(A,[-,-]_{A}),(A^{*}$,
$[-,-]_{A^{*}})\big)$ is a 
{\bf Manin triple of Lie algebras}.
\end{defi}

\begin{thm}\cite{Cha}
    Let $(A,[-,-]_{A})$ be a Lie algebra. Suppose that there is a Lie algebra structure $(A^{*},[-,-]_{A^{*}})$ on the dual space $A^{*}$ and $\delta:A\rightarrow A\otimes A$ is the linear dual of
    $[-,-]_{A^{*}}$.
    Then $(A,[-,-]_{A},\delta)$ is a Lie bialgebra if and only if there is a Manin triple of Lie algebras
    $\big((A\oplus A^{*},[-,-]_{d}$,
    $\mathcal{B}_{d}),(A,[-,-]_{A}),
    (A^{*},[-,-]_{A^{*}})\big)$.
    In this case, the multiplication $[-,-]_{d}$ on $A\oplus A^{*}$ is given by
    \begin{equation}\label{eq:commassomul}
        [x+a^{*},y+b^{*}]_{d}=[x,y]_{A}+\mathrm{ad}^{*}_{A^{*}}(a^{*})y-\mathrm{ad}^{*}_{A^{*}}(b^{*})x+
        [a^{*},b^{*}]_{A^{*}}+\mathrm{ad}^{*}_{A}(x)b^{*}-\mathrm{ad}^{*}_{A}(y)a^{*},
    \end{equation}
    for all $x,y\in A, a^{*},b^{*}\in A^{*}$.
\end{thm}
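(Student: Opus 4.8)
The plan is to pivot the argument around the explicit bracket \eqref{eq:commassomul}: in one direction, to show that any Manin-triple bracket on $A\oplus A^{*}$ must be of this form, and in the other, to show that \eqref{eq:commassomul} defines a Lie algebra precisely when the $1$-cocycle condition \eqref{eq:bib} holds.

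For the forward direction, suppose a Manin triple $\big((A\oplus A^{*},[-,-]_{d},\mathcal{B}_{d}),(A,[-,-]_{A}),(A^{*},[-,-]_{A^{*}})\big)$ is given. Since $A$ and $A^{*}$ are Lie subalgebras, $[x,y]_{d}=[x,y]_{A}$ and $[a^{*},b^{*}]_{d}=[a^{*},b^{*}]_{A^{*}}$. Writing the mixed bracket as $[x,b^{*}]_{d}=u+v^{*}$ with $u\in A$ and $v^{*}\in A^{*}$, pairing it against an arbitrary $y\in A$ and an arbitrary $c^{*}\in A^{*}$ and invoking the invariance of $\mathcal{B}_{d}$ together with \eqref{eq:bfds}, the duality convention \eqref{eq:dual}, and the fact that $\delta$ is the linear dual of $[-,-]_{A^{*}}$, one reads off $v^{*}=\mathrm{ad}^{*}_{A}(x)b^{*}$ and $u=-\mathrm{ad}^{*}_{A^{*}}(b^{*})x$. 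Hence $[-,-]_{d}$ is necessarily of the form \eqref{eq:commassomul}, and it only remains to determine when this bracket actually makes $A\oplus A^{*}$ a Lie algebra.

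So take $[-,-]_{d}$ to be \eqref{eq:commassomul}. Antisymmetry is immediate, since each summand is antisymmetric under swapping $x+a^{*}$ with $y+b^{*}$, and the invariance of $\mathcal{B}_{d}$ follows by a direct substitution using \eqref{eq:dual}, in which the two coadjoint cross-terms cancel pairwise. The crux is the Jacobi identity for $[-,-]_{d}$, which I would check by splitting a triple $(x+a^{*},y+b^{*},z+c^{*})$ according to how many of its arguments lie in $A$ as opposed to $A^{*}$. When all three lie in $A$ (resp.\ all three in $A^{*}$) the cyclic sum collapses to the Jacobi identity of $(A,[-,-]_{A})$ (resp.\ of $(A^{*},[-,-]_{A^{*}})$), which holds by hypothesis. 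When exactly two arguments lie in $A$ and one in $A^{*}$, expanding \eqref{eq:commassomul} and collecting the $A^{*}$-component of the cyclic sum, then dualizing via \eqref{eq:dual} and using again that $\delta$ is dual to $[-,-]_{A^{*}}$, produces precisely \eqref{eq:bib}; the $A$-component of the same cyclic sum then vanishes as a consequence of \eqref{eq:bib} and the two Jacobi identities already available, after pairing with an arbitrary element of $A^{*}$. The remaining case, two arguments in $A^{*}$ and one in $A$, is the mirror image under interchanging the roles of $A$ and $A^{*}$; it yields the dual of \eqref{eq:bib}, which is equivalent to \eqref{eq:bib} by the self-duality of the Lie bialgebra axioms. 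Assembling these, $(A\oplus A^{*},[-,-]_{d})$ is a Lie algebra if and only if $(A,[-,-]_{A},\delta)$ is a Lie bialgebra; together with the first part, and the evident facts that $A$ and $A^{*}$ are then Lie subalgebras and $\mathcal{B}_{d}$ is invariant, this proves both implications with $[-,-]_{d}$ given by \eqref{eq:commassomul}.

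The main obstacle is the two-in-$A$, one-in-$A^{*}$ case: the careful bookkeeping that rewrites the mixed Jacobi identity---which is naturally expressed through the bracket $[-,-]_{A^{*}}$ and the coadjoint actions---in the cobracket language of $\delta$ and recognizes the outcome as exactly the $1$-cocycle condition \eqref{eq:bib}. One may streamline this step (and its mirror) by invoking the Lie-algebra counterpart of the matched-pair criterion, namely the Lie analogue of Lemma \ref{lem:mp} with the two representations taken to be the coadjoint actions $\mathrm{ad}^{*}_{A}$ of $A$ on $A^{*}$ and $\mathrm{ad}^{*}_{A^{*}}$ of $A^{*}$ on $A$; the mixed Jacobi identities are then packaged as the matched-pair compatibility of these two actions, which is classically equivalent to \eqref{eq:bib}.
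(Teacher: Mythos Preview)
The paper does not prove this theorem; it is quoted as a classical result from \cite{Cha} and stated without proof. Your argument is the standard one for this well-known equivalence: invariance of $\mathcal{B}_{d}$ forces the mixed bracket to be the coadjoint cross-terms in \eqref{eq:commassomul}, and then the Jacobi identity for \eqref{eq:commassomul}, analyzed by type, reduces to the $1$-cocycle condition \eqref{eq:bib}. This is correct and is exactly the classical proof one finds in \cite{Cha}; there is nothing to compare since the paper supplies no proof of its own.
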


\begin{defi}
    Let $\big( (A\oplus A^{*},[-,-]_{d},\mathcal{B}_{d}),(A,[-,-]_{A}),(A^{*},[-,-]_{A^{*}})
    \big)$ be a Manin triple of Lie algebras.
    Suppose that $P:A\rightarrow A$ is an averaging operator on
    $(A,[-,-]_{A})$ and $Q^{*}:A^{*}\rightarrow A^{*}$ is an averaging operator on $(A^{*},[-,-]_{A^{*}})$. If $P+Q^{*}$ is an averaging operator on $(A\oplus A^{*},[-,-]_{d})$, then we say that $\big((A\oplus A^{*},[-,-]_{d},P+Q^{*},\mathcal{B}_{d}),(A,[-,-]_{A},P),$
    $(A^{*},[-,-]_{A^{*}},Q^{*})
    \big)$ is a {\bf Manin triple of averaging Lie algebras}.
\end{defi}

\begin{thm}\label{thm:2.11}
Let $\big( (A\oplus
A^{*},[-,-]_{d},\mathcal{B}_{d}),(A,[-,-]_{A}),(A^{*},[-,-]_{A^{*}})
\big)$ be a Manin triple of Lie algebras and the corresponding Lie
bialgebra be $(A,[-,-]_{A},\delta)$. Let $P,Q:A\rightarrow A$ be
linear maps. Then the triple $\big((A\oplus
A^{*},[-,-]_{d},P+Q^{*},\mathcal{B}_{d}),(A,[-,-]_{A},P), (A^{*},
[-,-]_{A^{*}},Q^{*})\big)$ is a Manin triple of averaging Lie
algebras
if and only if \eqref{eq:Ao}, \eqref{eq:ao pair}, \eqref{eq:aoco1}
and \eqref{eq:aoco2} hold, that is,
        $(A$,
        $[-,-]_{A},\delta,P,Q)$ is an averaging Lie bialgebra.
\end{thm}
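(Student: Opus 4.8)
The plan is to verify the four defining conditions of an averaging Lie bialgebra one at a time, using the already-established Manin triple characterizations for Lie bialgebras and the duality correspondences of Propositions~\ref{pro:admi ave coalg} and the admissible-averaging framework. By the Manin triple theorem for Lie algebras quoted above, the data $\big((A\oplus A^{*},[-,-]_{d},\mathcal{B}_{d}),(A,[-,-]_{A}),(A^{*},[-,-]_{A^{*}})\big)$ being a Manin triple of Lie algebras is equivalent to $(A,[-,-]_{A},\delta)$ being a Lie bialgebra, where $\delta$ is the linear dual of $[-,-]_{A^{*}}$; this handles condition~(1) and is part of our hypothesis. So the content is entirely in matching the \emph{averaging} part: the statement ``$P+Q^{*}$ is an averaging operator on $(A\oplus A^{*},[-,-]_{d})$'' must be shown equivalent to the conjunction of \eqref{eq:Ao}, \eqref{eq:ao pair}, \eqref{eq:aoco1}, \eqref{eq:aoco2}.

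First I would expand the averaging condition $[(P+Q^{*})(\xi),(P+Q^{*})(\zeta)]_{d}=(P+Q^{*})\big([(P+Q^{*})(\xi),\zeta]_{d}\big)$ for $\xi=x+a^{*}$, $\zeta=y+b^{*}$ using the explicit bracket \eqref{eq:commassomul}. Since $P$ maps $A$ to $A$ and $Q^{*}$ maps $A^{*}$ to $A^{*}$, and $[-,-]_{d}$ mixes the two summands only through the coadjoint terms, the identity decomposes into its $A$-component and its $A^{*}$-component, and each of those further splits by the bidegree of the arguments (both in $A$, both in $A^{*}$, one in each). Taking $a^{*}=b^{*}=0$ and reading off the $A$-component gives exactly \eqref{eq:Ao}, that $P$ is an averaging operator on $(A,[-,-]_{A})$; dually, taking $x=y=0$ and reading the $A^{*}$-component gives that $Q^{*}$ is an averaging operator on $(A^{*},[-,-]_{A^{*}})$, which by Proposition~\ref{pro:admi ave coalg} (first half) is \eqref{eq:aoco1}. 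The remaining mixed-bidegree components, after using the coadjoint action identities $\langle\mathrm{ad}^{*}_{A}(x)b^{*},y\rangle=-\langle b^{*},[x,y]_{A}\rangle$ and the analogue for $A^{*}$, will produce precisely the compatibility conditions \eqref{eq:ao pair} (the admissibility of $P,Q$ on the Lie algebra side) and \eqref{eq:aoco2} (the admissibility on the coalgebra side), once one recognizes, via Proposition~\ref{pro:admi ave coalg}, that $(A^{*},[-,-]_{A^{*}},Q^{*},P^{*})$ being an admissible averaging Lie algebra is the dual statement of $(A,\delta,Q,P)$ being an admissible averaging Lie coalgebra.

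The reverse direction is then bookkeeping: assuming all four identities, one reassembles the four bidegree-components and concludes that $P+Q^{*}$ satisfies the averaging identity on $(A\oplus A^{*},[-,-]_{d})$; combined with condition~(1) already in force, this exhibits the required Manin triple of averaging Lie algebras, and conversely. I would present the equivalence as: ``$P+Q^{*}$ averaging on the double'' $\Longleftrightarrow$ ``[$P$ averaging on $A$] and [$Q^{*}$ averaging on $A^{*}$] and [the mixed terms vanish]'' $\Longleftrightarrow$ \eqref{eq:Ao} $\wedge$ \eqref{eq:aoco1} $\wedge$ \eqref{eq:ao pair} $\wedge$ \eqref{eq:aoco2}, invoking Proposition~\ref{pro:admi ave coalg} to dualize the $A^{*}$-side statements, which is exactly the definition of an averaging Lie bialgebra.

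The main obstacle I anticipate is purely organizational rather than conceptual: the averaging identity on the $4$-dimensional (by bidegree) space of arguments unpacks into several scalar-valued identities after pairing against test elements, and one must carefully track which coadjoint-action term lands in which component and verify that the two ``mixed'' compatibility relations in \eqref{eq:ao pair} and \eqref{eq:aoco2} (each of which is itself a two-part equality, $[P(x),Q(y)]_{A}=Q([P(x),y]_{A})=Q([x,Q(y)]_{A})$) are \emph{both} recovered and not merely one of the two equalities. Checking that no component is over- or under-counted — i.e., that the dictionary between the $d$-bracket averaging identity and the four listed equations is an honest equivalence with no residual condition — is where care is needed; the actual manipulations reduce to repeated use of \eqref{eq:commassomul}, the definition of the coadjoint representations, and the nondegeneracy of $\mathcal{B}_{d}$, all of which are routine. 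I would remark that this parallels the argument pattern of \cite[Proposition~3.9]{Bai2021} already cited in the proof of Proposition~\ref{pro:2.6}.
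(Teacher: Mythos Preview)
Your proposal is correct and follows essentially the same approach as the paper: both expand the averaging identity for $P+Q^{*}$ on the double via \eqref{eq:commassomul}, decompose by bidegree, and identify the pure-$A$, pure-$A^{*}$, and two mixed components with \eqref{eq:Ao}, \eqref{eq:aoco1}, \eqref{eq:aoco2}, and \eqref{eq:ao pair} respectively (the paper writes the last three as intermediate identities \eqref{eq:mp ao1}--\eqref{eq:pq2} on $A^{*}$ before declaring them equivalent to the desired equations ``by a straightforward check'', whereas you more explicitly invoke Proposition~\ref{pro:admi ave coalg} for the dualization). Your concern that each two-part mixed equality must be fully recovered is exactly right and is handled in the paper by displaying both halves of \eqref{eq:pq1} and \eqref{eq:pq2} explicitly.
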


\begin{proof}
   Let $x,y\in A, a^{*},b^{*}\in A^{*}$. Then we have {\small
\begin{eqnarray*}
     [(P+Q^{*})(x+a^{*}),(P+Q^{*})(y+b^{*})]_{d}
    &\overset{\eqref{eq:commassomul}}{=}&[P(x),P(y)]_{A} +\mathrm{ad}^{*}_{A^{*}}\big(Q^{*}(a^{*})\big)P(y)
    -\mathrm{ad}^{*}_{A^{*}}\big(Q^{*}(b^{*})\big)P(x)\\
    &&+[Q^{*}(a^{*}),Q^{*}(b^{*})]_{A^{*}}
    +\mathrm{ad}^{*}_{A}\big(P(x)\big)Q^{*}(b^{*})
    -\mathrm{ad}^{*}_{A}\big(P(y)\big)Q^{*}(a^{*}),\\  (P+Q^{*})[(P+Q^{*})(x+a^{*}),y+b^{*}]_{d}
    &\overset{\eqref{eq:commassomul}}{=}&P[P(x),y]_{A}
    +P\Big(\mathrm{ad}^{*}_{A^{*}}\big(Q^{*}(a^{*})\big)y\Big)
    -P\big(\mathrm{ad}^{*}_{A^{*}}(b^{*})P(x)\big)\\
    &&+Q^{*}[Q^{*}(a^{*}),b^{*}]_{A^{*}}
    +Q^{*}\Big(\mathrm{ad}^{*}_{A}\big(P(x)\big)b^{*}\Big)
    -Q^{*}\big(\mathrm{ad}^{*}_{A}(y)Q^{*}(a^{*})\big).
\end{eqnarray*}}Hence $P+Q^{*}$ is an averaging operator on $(A\oplus
A^{*},[-,-]_{d})$ if and only if \eqref{eq:Ao}
and the following equations hold:
    \begin{eqnarray}
    	&&[Q^{*}(a^{*}),Q^{*}(b^{*})]_{A^{*}}=Q^{*}[Q^{*}(a^{*}),b^{*}]_{A^{*}}, \label{eq:mp ao1}\\
        &&\mathrm{ad}^{*}_{A^{*}}\big(Q^{*}(a^{*})\big)P(x)
        =P\Big(\mathrm{ad}^{*}_{A^{*}}\big(Q^{*}(a^{*})\big)x\Big)
        =P\big(\mathrm{ad}^{*}_{A^{*}}(a^{*})P(x)\big),\label{eq:pq1}\\
        &&\mathrm{ad}^{*}_{A}\big(P(x)\big)Q^{*}(a^{*})
        =Q^{*}\Big(\mathrm{ad}^{*}_{A}\big(P(x)\big)a^{*}\Big)
        =Q^{*}\big(\mathrm{ad}^{*}_{A}(x)Q^{*}(a^{*})\big).\label{eq:pq2}
    \end{eqnarray}
It is straightforward to show that
\begin{eqnarray*}
\eqref{eq:mp ao1}\Longleftrightarrow \eqref{eq:aoco1},\;
\eqref{eq:pq1}\Longleftrightarrow \eqref{eq:aoco2},\;
\eqref{eq:pq2}\Longleftrightarrow\eqref{eq:ao pair}.
\end{eqnarray*}
Hence the conclusion follows.
\end{proof}

\begin{pro}\label{pro:5.2}
Let $\big( (A\oplus A^{*},[-,-]_{d},P+Q^{*},\mathcal{B}_{d}),
(A,[-,-]_{A},P),(A^{*},[-,-]_{A^{*}},Q^{*}) \big)$ be a Manin
triple of averaging Lie algebras. Then there is a Manin triple of
Leibniz algebras $\big((A\oplus A^{*}$,
$\circ_{d},\mathcal{B}_{d}), (A,\circ_{A}),
(A^{*},\circ_{A^{*}})\big)$ associated to the nondegenerate
symmetric left-invariant bilinear form $\mathcal{B}_{d}$, where
\begin{eqnarray}\label{eq:mpequ1}
(x+a^{*})\circ_{d}(y+b^{*})
:=[(P+Q^{*})(x+a^{*}),y+b^{*}]_{d},\;\forall x,y\in A, a^{*},b^{*}\in A^{*}.
\end{eqnarray}
Moreover, there is a Manin triple $\big(  (
A\oplus
A^{*},\succ_{d},\prec_{d},\mathcal{B}_{d}),
(A,\succ_{A},\prec_{A}),(A^{*},
\succ_{A^{*}}, \prec_{A^{*}})\big)$ of \sctplas, where
\begin{eqnarray}
    &&(x+a^{*})\succ_{d}(y+b^{*})
    :=[(P+Q^{*})(x+a^{*}),y+b^{*}]_{d}-(Q+P^{*})
    ([x+a^{*},y+b^{*}]_{d}),\ \ \ \ \ \label{eq:dc1}\\
    &&(x+a^{*})\prec_{d}(y+b^{*}):=(Q+P^{*})([x+a^{*},y+b^{*}]_{d})
    ,\;\forall x,y\in A, a^{*},b^{*}\in A^{*}.\label{eq:dc2}
\end{eqnarray}
\end{pro}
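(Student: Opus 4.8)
The plan is to obtain both Manin triples by feeding the hypothesis directly into Proposition \ref{ex:Lie aver} and Proposition \ref{pro:1926}, and then checking that $A$ and $A^*$ are closed under the resulting operations; no genuinely new computation is needed beyond one bookkeeping identity for adjoint maps.

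First I would unwind the hypothesis: a Manin triple of averaging Lie algebras $\big((A\oplus A^*,[-,-]_d,P+Q^*,\mathcal{B}_d),(A,[-,-]_A,P),(A^*,[-,-]_{A^*},Q^*)\big)$ carries in particular a Manin triple of Lie algebras, so $\mathcal{B}_d$ given by \eqref{eq:bfds} is a nondegenerate symmetric invariant bilinear form on $(A\oplus A^*,[-,-]_d)$, and $P+Q^*$ is an averaging operator on this Lie algebra. Applying Proposition \ref{ex:Lie aver} to $(A\oplus A^*,[-,-]_d,P+Q^*)$ yields the Leibniz algebra $(A\oplus A^*,\circ_d)$ with $\circ_d$ given by \eqref{eq:mpequ1}; applying Proposition \ref{pro:1926} to the quadratic Lie algebra $(A\oplus A^*,[-,-]_d,\mathcal{B}_d)$ with averaging operator $P+Q^*$ shows that $\mathcal{B}_d$ is left-invariant on $(A\oplus A^*,\circ_d)$ and produces the compatible quadratic \sctpla $(A\oplus A^*,\succ_d,\prec_d,\mathcal{B}_d)$ whose multiplications are given by the formula \eqref{eq:90} with $P$ replaced by $P+Q^*$ and $\widehat{P}$ replaced by the $\mathcal{B}_d$-adjoint $\widehat{P+Q^*}$ of $P+Q^*$. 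A one-line computation from \eqref{eq:bfds}, namely $\mathcal{B}_d\big((P+Q^*)(x+a^*),y+b^*\big)=\langle x,P^*(b^*)\rangle+\langle a^*,Q(y)\rangle=\mathcal{B}_d\big(x+a^*,(Q+P^*)(y+b^*)\big)$, gives $\widehat{P+Q^*}=Q+P^*$, so these multiplications are exactly \eqref{eq:dc1} and \eqref{eq:dc2} (equivalently, $(A\oplus A^*,[-,-]_d,P+Q^*,Q+P^*)$ is an admissible averaging Lie algebra by Proposition \ref{pro:2.6}).

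It then remains to observe the subalgebra closures. Since $A$ and $A^*$ are Lie subalgebras of $(A\oplus A^*,[-,-]_d)$ and $P+Q^*$ restricts to $P$ on $A$ and to $Q^*$ on $A^*$, for $x,y\in A$ we get $x\circ_d y=[P(x),y]_A=x\circ_A y\in A$, and for $a^*,b^*\in A^*$ we get $a^*\circ_d b^*=[Q^*(a^*),b^*]_{A^*}\in A^*$; hence $(A,\circ_A)$ and $(A^*,\circ_{A^*})$ are Leibniz subalgebras, proving the first assertion via Definition \ref{d:manintypea}(1). Likewise, since $Q+P^*$ restricts to $Q$ on $A$ and to $P^*$ on $A^*$, restricting \eqref{eq:dc1}--\eqref{eq:dc2} to $A$ recovers \eqref{eq:com asso and SDPP} and restricting to $A^*$ recovers \eqref{eq:mp re2}; thus $A$ and $A^*$ are closed under $\succ_d$ and $\prec_d$, so $(A,\succ_A,\prec_A)$ and $(A^*,\succ_{A^*},\prec_{A^*})$ are the \sctplasubs of $(A\oplus A^*,\succ_d,\prec_d)$ (and, by Proposition \ref{pro:com asso and SDPP} together with the admissibility of $(A,[-,-]_A,P,Q)$ coming from Theorem \ref{thm:2.11}, they are indeed \sctplas). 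This is the Manin triple of \sctplas of Definition \ref{d:manintypea}(2), which matches the Manin triple of Leibniz algebras just constructed under the correspondence of Propositions \ref{pro:330} and \ref{pro:equ}.

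I do not expect a genuine obstacle here: the entire argument is an assembly of Proposition \ref{ex:Lie aver}, Proposition \ref{pro:1926} (which already reconciles the ``averaging operator'' description \eqref{eq:90} with the ``left-invariant bilinear form'' description \eqref{eq:cor3}--\eqref{eq:cor4} of the induced \sctpla), Propositions \ref{pro:330} and \ref{pro:equ}, and elementary closure checks. The only point requiring care is to track faithfully how $P+Q^*$ and $Q+P^*$ restrict to the summands $A$ and $A^*$, so that \eqref{eq:mpequ1}, \eqref{eq:dc1}, \eqref{eq:dc2} reduce on $A$ and on $A^*$ to \eqref{eq:Leibniz from aver op}, \eqref{eq:com asso and SDPP} and \eqref{eq:mp re2}; this is precisely the bookkeeping carried out just above.
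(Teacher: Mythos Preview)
Your proposal is correct and follows essentially the same approach as the paper: both arguments apply Proposition~\ref{pro:1926} to the quadratic Lie algebra $(A\oplus A^*,[-,-]_d,\mathcal{B}_d)$ with averaging operator $P+Q^*$, compute the $\mathcal{B}_d$-adjoint $\widehat{P+Q^*}=Q+P^*$, and check that $A$ and $A^*$ are closed under the resulting operations. The only cosmetic difference is that the paper invokes Proposition~\ref{pro:equ} (via \eqref{eq:3603}--\eqref{eq:3605}) to pass from the Manin triple of Leibniz algebras to the Manin triple of \sctplas and then reads off \eqref{eq:dc1}--\eqref{eq:dc2}, whereas you read off \eqref{eq:dc1}--\eqref{eq:dc2} directly from \eqref{eq:90} and verify the subalgebra closures by hand; these are the same computation packaged differently.
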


\begin{proof}
It is clear that the induced Leibniz algebra $(A\oplus
A^{*},\circ_{d})$ defined by \eqref{eq:mpequ1} from the averaging
Lie algebra $(A\oplus A^{*},[-,-]_{d},P+Q^{*})$ contains the
induced Leibniz algebras $(A,\circ_{A}=\circ_{d}|_{A\otimes A})$ and
$(A^{*},\circ_{A^{*}}=\circ_{d}|_{A^*\otimes A^{*}})$ as Leibniz subalgebras.
The left-invariance of $\mathcal{B}_{d}$ follows from Proposition
\ref{pro:1926}. Moreover, by Propositions \ref{pro:equ}, there is
a Manin triple of \sctplas given by \eqref{eq:3603} and
\eqref{eq:3605}. Explicitly, note that the adjoint map of
$P+Q^{*}$ associated to $\mathcal{B}_{d}$ is $Q+P^{*}$. Then we
have
\begin{eqnarray*}
\mathcal{B}_{d}\big( (x+a^{*})\prec_{d}(y+b^{*}), z+c^{*} \big)&\overset{\eqref{eq:3603}}{=}&-\mathcal{B}_{d}\big(
x+a^{*},(z+c^{*})\circ_{d}(y+b^{*}) \big)\\
&=&-\mathcal{B}_{d}\big(
x+a^{*},[(P+Q^{*})(z+c^{*}),y+b^{*}]_{d}\big)\\
&=&\mathcal{B}_{d}\big( [x+a^{*},y+b^{*}]_{d},(P+Q^{*})(z+c^{*}) \big)\\
&=&\mathcal{B}_{d}\big( (Q+P^{*})[x+a^{*},y+b^{*}]_{d}, (z+c^{*}) \big).
\end{eqnarray*}
By the nondegeneracy of $\mathcal{B}_{d}$, we obtain \eqref{eq:dc2}, and   \eqref{eq:dc1} follows from \eqref{eq:3605}.
\end{proof}

Combining Propositions \ref{pro:equ}, \ref{pro:5.9}, \ref{pro:5.2} and Theorems \ref{thm:Manin triple}, \ref{thm:2.11}, we have the following commutative diagram.

\vspace{-.4cm}

{\tiny
    \begin{equation*}
\begin{split}
      \xymatrix{
         \txt{Manin triples of \\ averaging Lie algebras}
            \ar@{=>}[r]^-{{\rm Prop.}~\ref{pro:5.2}}
            \ar@{<=>}[d]^-{{\rm Thm.}~\ref{thm:2.11}}
              & \txt{Manin triples of Leibniz algebras \\ associated to the nondegenerate \\ symmetric left-invariant bilinear forms}
              \ar@<.4ex>@{<=>}[r]^-{{\rm Prop.}~\ref{pro:equ}}
              & \txt{Manin triples of \\ \sctplas}
              \ar@{<=>}[d]^-{{\rm Thm.}~\ref{thm:Manin triple}}
              \\  
            \txt{averaging Lie bialgebras}
            \ar@{=>}[rr]^-{{\rm Prop.}~\ref{pro:5.9} }
            & & \txt{\sctplbs}
            }
\end{split}
\end{equation*}
}

\noindent{\bf Acknowledgments.} This work is supported by NSFC
(11931009, 12271265, 12261131498, 12326319, 12401031, W2412041),
the Postdoctoral Fellowship Program of
CPSF  (GZC20240755, 2024T005TJ, 2024M761507),
the Fundamental Research Funds for the Central Universities and Nankai Zhide Foundation.

\noindent
{\bf Declaration of interests. } The authors have no conflict of interest to declare.

\noindent
{\bf Data availability. } No data were created or analyzed.

\vspace{-.2cm}

\end{document}